\newcommand\mathcircled[1]{%
	\mathpalette\@mathcircled{#1}%
}
\newcommand\@mathcircled[2]{%
	\tikz[baseline=(math.base)] \node[draw,ellipse,inner sep=1pt] (math) {$\m@th#1#2$};%
}
\newlength\ubwidth
\newtheorem{theorem}{Theorem}
\newtheorem{lemma}[theorem]{Lemma}
\newtheorem{cor}[theorem]{Corollary}
\newtheorem{prop}[theorem]{Proposition}
\newtheorem{defn}[theorem]{Definition}
\newcommand*\circled[1]{\tikz[baseline=(char.base)]{
		\node[shape=circle,draw,inner sep=2pt] (char) {#1};}}
\DeclareMathOperator*{\aast}{ \hspace{-1mm} 
\scalebox{1.5}{ \raisebox{-0.3mm}{\ensuremath{\ast}} } \hspace{-1mm} }
\begin{document}

\title[Decomposition of and Distinct parts in Cylindric Partitions]
{A Decomposition of Cylindric Partitions and Cylindric Partitions into Distinct Parts}

\author[Kur\c{s}ung\"{o}z]{Ka\u{g}an Kur\c{s}ung\"{o}z}
\address{Ka\u{g}an Kur\c{s}ung\"{o}z, Faculty of Engineering and Natural Sciences, 
    Sabanc{\i} University, Tuzla, Istanbul 34956, Turkey}
\email{kursungoz@sabanciuniv.edu}

\author[\"{O}mr\"{u}uzun Seyrek]{Hal\.{ı}me \"{O}mr\"{u}uzun Seyrek}
\address{Hal\.{ı}me \"{O}mr\"{u}uzun Seyrek, Faculty of Engineering and Natural Sciences, 
    Sabanc{\i} University, Tuzla, Istanbul 34956, Turkey}
\email{halimeomruuzun@alumni.sabanciuniv.edu}

\subjclass[2010]{05A17, 05A15, 11P84}

\keywords{cylindric partition, partition generating fuction, $q$-series, 
      evidently positive series}

\date{2023}

\begin{abstract}
We show that cylindric partitions are in one-to-one correspondence 
with a pair which has an ordinary partition and 
a colored partition into distinct parts.  
Then, we show the general form of the generating function for 
cylindric partitions into distinct parts and give some examples.  
We prove part of a conjecture by Corteel, Dousse, and Uncu.  
The approaches and proofs are elementary and combinatorial.
\end{abstract}

\maketitle

\section{Introduction and statement of results}
\label{secIntro}

Cylindric partitions were introduced by Gessel and Krattenthaler \cite{GesselKrattenthaler}. 

\begin{defn}\label{def:cylin} Let $r$ and $\ell$ be positive integers.
Let $c=(c_1,c_2,\dots, c_r)$ be a composition, where $c_1+c_2+\dots+c_r=\ell$.
A \emph{cylindric partition with profile $c$} is a vector partition
$\Lambda = (\lambda^{(1)},\lambda^{(2)},\dots,\lambda^{(r)})$,
where each $\lambda^{(i)} = \lambda^{(i)}_1+\lambda^{(i)}_2 + \cdots +\lambda^{(i)}_{s_i}$ is a partition,
such that for all $i$ and $j$,
\[
  \lambda^{(i)}_j\geq \lambda^{(i+1)}_{j+c_{i+1}} \quad \text{and} \quad \lambda^{(r)}_{j}\geq\lambda^{(1)}_{j+c_1}.
\]
The integers $r$ and $\ell$ are called
the \emph{rank}, and the \emph{level}, of the cylindric partition, 
respectively.  
\end{defn}

For example, the sequence 
$\Lambda=((10,5,4,1),(12,8,5,3),(7,6,4,2))$
is a cylindric partition with profile $(1,2,0)$. 
One can check that for all $j$, $\lambda^{(1)}_j\ge \lambda^{(2)}_{j+2}$, $\lambda^{(2)}_j\ge \lambda^{(3)}_{j}$
and $\lambda^{(3)}_j\ge \lambda^{(1)}_{j+1}$.
We can visualize the required inequalities by writing the partitions 
in subsequent rows repeating the first row below the last one, 
and shifting the rows below as much as necessary to the left.  
Thus, the inequalities become the weakly decreasing of the parts to the left in each row, 
and downward in each column.  
\[
\begin{array}{ccc ccc ccc}
	& & & 10 & 5 & 4 & 1\\
	& 12 & 8 & 5 & 3 & \\
	& 7 & 6 & 4 & 2& \\
	\textcolor{lightgray}{10} & \textcolor{lightgray}{5} 
	& \textcolor{lightgray}{4} & \textcolor{lightgray}{1}	
\end{array}
\]
The repeated first row is shown in gray.  

The size $\vert \Lambda \vert$ of a cylindric partition
$\Lambda = (\lambda^{(1)},\lambda^{(2)},\dots,\lambda^{(r)})$
is defined to be the sum of all the parts in the partitions
$\lambda^{(1)},\lambda^{(2)},\dots,\lambda^{(r)}$.
The largest part of a cylindric partition $\Lambda$
is defined to be the maximum part among all the partitions in $\Lambda$,
and it is denoted by $\max(\Lambda)$.
For the cylindric partition $\Lambda$ given above, 
$\vert \Lambda \vert = 67$, and $\mathrm{max}(\Lambda) = 12$.  

The following generating function
$$F_c(z,q):=\sum_{\Lambda\in \mathcal{P}_c} z^{\max{(\Lambda)}}q^{\vert\Lambda \vert}$$
is the generating function for cylindric partitions, where $\mathcal{P}_c$ denotes the set of all cylindric partitions with profile $c$. 

In 2007, Borodin \cite{Borodin} showed that when one sets $z=1$ to this generating function, it turns out to be a very nice infinite product. 

\begin{theorem}[Borodin, 2007]	
\label{theorem-Borodin}
	Let $r$ and $\ell$ be positive integers,
	and let $c=(c_1,c_2,\dots,c_r)$ be a composition of $\ell$. Define $t:=r+\ell$
	and $s(i,j) := c_i+c_{i+1}+\dots+ c_j$.
	Then,
	\begin{equation} 
		\label{BorodinProd}
		F_c(1,q) = \frac{1}{(q^t;q^t)_\infty}
				\prod_{i=1}^r \prod_{j=i}^r \prod_{m=1}^{c_i}
				\frac{1}{(q^{m+j-i+s(i+1,j)};q^t)_\infty}
				\prod_{i=2}^r \prod_{j=2}^i \prod_{m=1}^{c_i} \frac{1}{(q^{t-m+j-i-s(j,i-1)};q^t)_\infty}.
	\end{equation}
\end{theorem}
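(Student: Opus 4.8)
The plan is to establish Borodin's product formula via the transfer-matrix / commutation-relation machinery for the Andrews--Gordon type $q$-difference equations attached to cylindric partitions. First I would encode a cylindric partition with profile $c$ as a sequence of ordinary partitions read along the ``cylinder'', and record the generating function $F_c(1,q)$ through a system of linear $q$-difference equations indexed by the profiles obtained by cyclically rotating and modifying $c$. Concretely, peeling off the largest column (or the staircase) of a cylindric diagram produces a bijection between cylindric partitions of profile $c$ and smaller cylindric partitions of a related profile together with some free parts, which translates into a functional equation of the shape $F_c(z,q) = (\text{rational factor in } z,q)\cdot F_{c'}(zq^{?},q)$. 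Iterating and tracking the parameter $z$ gives $F_c(1,q)$ as an infinite product once one checks the iteration closes up after going once around the cylinder; this is where the exponents $m+j-i+s(i+1,j)$ and $t-m+j-i-s(j,i-1)$ and the modulus $t=r+\ell$ will emerge.

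Alternatively, and I think more robustly, I would deduce the formula from the theory of cylindric partitions as weighted lattice paths / from the $\widehat{\mathfrak{sl}}_r$ crystal or vertex-operator description due to Gessel--Krattenthaler and Borodin: cylindric partitions of profile $c$ and level $\ell$ are in bijection with certain tuples of non-intersecting lattice paths on a cylinder, whose generating function is a determinant (Lindström--Gessel--Viennot on the cylinder). One then evaluates this cylindric determinant. The key step is to recognize the $z=1$ specialization of that determinant as a ratio that telescopes: using the Weyl denominator formula for the affine root system $A_{r-1}^{(1)}$ (equivalently, a Macdonald-type identity), the determinant collapses to the stated triple product over $1\le i\le j\le r$ and $1\le m\le c_i$, divided by $(q^t;q^t)_\infty$. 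The combinatorial heart is the correct bookkeeping of which affine roots contribute: the first double product over $i\le j$ comes from the ``upper'' roots with residue data $s(i+1,j)$, and the second over $2\le j\le i$ from the ``lower'' roots with residue $s(j,i-1)$, the shift by $t$ reflecting the affine null root.

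The key steps in order would be: (1) set up the bijection from cylindric partitions of profile $c$ to families of non-intersecting paths on the cylinder of circumference $t=r+\ell$, with the weight $q^{|\Lambda|}$ becoming $q$ to the total area; (2) apply the cylindric Lindström--Gessel--Viennot lemma to write $F_c(1,q)$ as a $\theta$-function determinant (a determinant whose entries are sums over the winding number, i.e. partial theta / Jacobi theta values); (3) factor this determinant using multilinearity and the quasi-periodicity of the theta entries, reducing it to the affine Weyl denominator; (4) apply the Macdonald identity for $A_{r-1}^{(1)}$ to collapse the Weyl denominator into an infinite product; (5) carefully match the residues and exponents coming from the profile $c$ against the claimed triple products, checking the split at $j=i$ that distinguishes the two product families, and confirming the overall $1/(q^t;q^t)_\infty$ prefactor from the null-root direction.

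The main obstacle I expect is step (5) together with the bookkeeping in (3)--(4): getting the exponents exactly right, namely that the ``height'' contributed by a box in the $i$th component at column offset $m$ is precisely $m+j-i+s(i+1,j)$ for the forward arcs and $t-m+j-i-s(j,i-1)$ for the backward arcs, requires choosing the starting and ending points of the lattice paths consistently with the shift vector $c$, and then tracking how each cyclic inequality $\lambda^{(i)}_j\ge\lambda^{(i+1)}_{j+c_{i+1}}$ and $\lambda^{(r)}_j\ge\lambda^{(1)}_{j+c_1}$ contributes to the winding and to the residue classes modulo $t$. A secondary difficulty is justifying the convergence and rearrangement needed to pass from the theta determinant to the infinite product when $r$ is large; one handles this by working in the formal power series ring $\mathbb{Z}[[q]]$ throughout (all the relevant series are in $q$ with no negative powers after normalization), so the Macdonald identity can be invoked as a formal identity rather than an analytic one. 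Once the exponents are verified in a couple of small cases (e.g. $r=1$, where the formula must reduce to $1/(q;q)_\infty$, and $r=2$), the general pattern follows by the structure of the $A_{r-1}^{(1)}$ root system.
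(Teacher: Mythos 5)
First, a point of comparison: the paper does not prove Theorem \ref{theorem-Borodin} at all. It is quoted from Borodin's 2007 paper and used as a black box throughout, so there is no internal proof to measure your attempt against; your argument has to stand entirely on its own.

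As a self-contained argument it has a genuine gap at its core. Your first route (iterating a functional equation of the shape $F_c(z,q)=(\text{rational factor})\,F_{c'}(zq^{?},q)$ until it ``closes up'' around the cylinder) does not work as described: peeling the largest part or a column couples $F_c$ to a \emph{sum} over all profiles $d$, not to a single $F_{c'}$ --- this is exactly the Corteel--Welsh-type system, cf.\ \eqref{eqFuncEqAltToCorteelWelsh} in this paper --- so the iteration does not telescope to an infinite product. Your second route names real machinery (the Gessel--Krattenthaler cylindric Lindstr\"om--Gessel--Viennot determinant with theta-type entries, and $A_{r-1}^{(1)}$ Macdonald/denominator identities), but steps (3)--(5), where you admit the work lies, are asserted rather than carried out, and step (3) is not correct as stated: the $z=1$ cylindric determinant is an alternating sum of level-$t$ theta functions, i.e.\ a Weyl--Kac \emph{numerator}, not the Weyl denominator, and it does not reduce to the denominator by ``multilinearity and quasi-periodicity'' alone. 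One needs the further nontrivial input that this specialized numerator itself factors (a Lepowsky-type numerator formula, equivalently a Macdonald identity for the rescaled lattice of level $t=r+\ell$), and the exponent bookkeeping producing $m+j-i+s(i+1,j)$ and $t-m+j-i-s(j,i-1)$ is precisely the content of the theorem, not a routine check; verifying $r=1$ and $r=2$ does not propagate to general $r$ ``by the structure of the root system.'' (For the record, Borodin's own proof goes through the periodic Schur process and vertex-operator/transfer-matrix calculus, which is different from both of your routes.) As written, the proposal is a plausible research plan rather than a proof.
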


The identity (\refeq{BorodinProd}) is a very strong tool to find product representation of generating functions of cylindric partitions with a given profile explicitly.  

Here and throughout, 
we use the following standard $q$-Pochhammer symbols~\cite{GR}.  
\begin{align}
\nonumber 
  (a; q)_n := \prod_{j = 1}^n (1 - aq^{j-1}) 
  \quad \textrm{ and } \quad 
  (a; q)_\infty := \lim_{n \to \infty} (a; q)_n, 
\end{align}
for any $n \in \mathbb{N}$, $a, q \in \mathbb{C}$, 
and $\vert q \vert < 1$.  
The last condition on $q$ will ensure that 
all series and infinite products in this note converge absolutely\cite{NT-Rama, GR}.  

Decompositions of cylindric partitions into several combinatorial components 
has been studied widely.  
To name a few, by Borodin~\cite{Borodin}, 
Corteel~\cite{Corteel-RR-RSK}, Corteel, Savelief, and Vuletić~\cite{CSV}, 
Langer~\cite{Langer-I, Langer-II}, 
van Leeuwen~\cite{Leeuwen},
and Tingley~\cite{Tingley, Tingley-Correction}.
In particular, Borodin~\cite{Borodin} proved that 
cylindric partitions with a fixed profile are in one-to-one correspondence 
with a pair consisting of an ordinary partition and 
an arbitrarily labeled cylindric diagram.  
A bijective proof of this correspondence is given by Langer~\cite{Langer-II}, 
based on the notion of growth diagrams 
developed by Fomin~\cite{Fomin88, Fomin95}.  
Langer~\cite{Langer-II} also cites Krattenthaler~\cite{Krattenthaler-growth-diag} 
for a similar bijection.  
Tingley, too, proved that each cylindric partition corresponds to a pair
consisting of an ordinary partition and a labeled partition into distinct parts.
Our first general result is the following theorem.  

\begin{theorem}
\label{thmCylPtnVsPtnPairsFullCase}
  For any profile $c = (c_1, \ldots, c_r)$, 
  the cylindric partitions with profile $c$ 
  are in one-to-one correspondence with pairs of partitions $(\mu, \beta)$ 
  in which $\mu$ is an unrestricted partition 
  and $\beta$ is a partition into labeled distinct parts.  
  Moreover, the possible labels of parts of $\beta$
  are determined by the profile $c$, 
  and the length of runs in $\beta$ cannot exceed
  the smaller of $(r-1)$ and $(\ell-1)$.
\end{theorem}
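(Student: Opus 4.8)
The plan is to build the bijection directly from the combinatorial picture of a cylindric partition as an infinite, periodic array of rows (the picture drawn in the introduction, with the first block of rows repeated below). First I would fix the profile $c = (c_1, \ldots, c_r)$ and think of a cylindric partition $\Lambda$ as this doubly-indexed family of weakly decreasing rows, weakly decreasing down columns, and periodic with period $t = r + \ell$ in the appropriate diagonal sense. The key observation I want to exploit is that Borodin's product formula (Theorem~\ref{theorem-Borodin}) already tells us the ``shape'' of the answer when $z = 1$: one factor $1/(q^t;q^t)_\infty$, which should account for the unrestricted partition $\mu$, and a finite collection of factors $\prod 1/(q^{a_k};q^t)_\infty$ indexed by residues $a_k$ mod $t$, which should account for the labeled distinct parts of $\beta$ — each admissible label corresponds to one of these arithmetic progressions, and ``distinct parts'' is exactly what $1/(q^{a};q^t)_\infty$ is \emph{not}, so a single such factor must instead be read as a \emph{sum} over how many times residue $a$ is used, i.e.\ as contributing distinct parts across the different residue classes. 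So the labels of $\beta$ are forced to be precisely the exponents appearing in Borodin's two triple products, and this is how the clause ``the possible labels are determined by $c$'' gets proved.

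The construction itself I would carry out in two stages. Stage one: peel off the unrestricted partition $\mu$. Following the spirit of Andrews--Gordon-type decompositions, I would subtract from $\Lambda$ a canonical ``staircase'' — the componentwise minimal cylindric partition compatible with a given set of column-heights — so that what remains records only the \emph{successive differences} between adjacent rows/columns in the periodic array. Stage two: read those successive differences off as a sequence of nonnegative integers attached to the $t$ residue classes, and observe that the cylindric inequalities translate into the statement that, within each residue class, the data is an unrestricted partition, while \emph{across} consecutive residue classes the difference data is forced to be strictly decreasing (this is where ``distinct parts'' appears) except that the same value may recur only in a controlled, short run. The length of such a run is bounded by how many consecutive rows (respectively columns) can ``tie'' before the periodicity or the profile forces a strict drop — this is exactly $\min(r-1, \ell-1)$, since a run of equal values corresponds either to equal consecutive parts in one of the $r$ component partitions (bounded by $r-1$ because the $r$-th relation wraps around with a shift $c_1 \ge 0$) or to equal values along a diagonal of length at most $\ell - 1$. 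I would make the map invertible by giving the explicit inverse: from $(\mu, \beta)$, reconstruct the difference data, then integrate (partial-sum) back up the columns to recover $\Lambda$, and check the cylindric inequalities hold because strict decrease of the $\beta$-part data guarantees them.

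The main obstacle, and the part I expect to need the most care, is Stage two: pinning down \emph{exactly} which tuples of residues can serve as labels and proving that the run-length bound is both necessary and sufficient. Necessity is a counting/consistency check against Borodin's formula; sufficiency requires showing that every $(\mu,\beta)$ in the claimed target set actually produces a valid cylindric partition — i.e.\ that no cylindric inequality is violated — and conversely that two different pairs never give the same $\Lambda$. I would handle this by setting up the bijection first in the special case of small rank or a single nonzero $c_i$ (where the periodic array degenerates to something close to a plane partition in a box), verifying the label set and run bound there by hand against Borodin, and then bootstrapping to general $c$ by induction on $r$ (or on $\ell$), peeling off one component partition at a time and tracking how its part-equalities feed into the run-length count. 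A secondary but real technical nuisance will be bookkeeping the ``wrap-around'' relation $\lambda^{(r)}_j \ge \lambda^{(1)}_{j+c_1}$ consistently with the others, since it is the one that breaks the naive symmetry between the $r$ rows and is ultimately responsible for the asymmetric-looking bound $\min(r-1,\ell-1)$.
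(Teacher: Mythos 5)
Your plan defers exactly the step that \emph{is} Theorem \ref{thmCylPtnVsPtnPairsFullCase}: an explicit, weight-preserving bijection. Stage two is asserted, not constructed, and the picture it asserts is not the right one. In the paper's correspondence the labels of the parts of $\beta$ are \emph{shapes}, i.e.\ partitions with at most $r-1$ parts, each part at most $\ell$ (all such shapes except the $r$ ``staircase'' ones can occur), so the number of admissible labels is $\binom{\ell+r-1}{r-1}-r$; this has nothing to do with the $t=r+\ell$ residue classes or with the exponents in Borodin's two triple products, so the claim that the labels ``are forced to be precisely the exponents in Borodin's product'' cannot be sustained. A concrete test case exposes the gap: for profile $(2,1)$ the pairs are $(\mu,\beta)$ with $\mu$ unrestricted and $\beta$ a partition into distinct, non-consecutive parts (no residue condition at all), and the fact that these $\beta$'s are equinumerous with partitions into parts $\equiv 1,4 \pmod 5$ is the first Rogers--Ramanujan identity. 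If your residue-class reading of the cylindric inequalities were the mechanism, that identity would follow from trivial bookkeeping; it does not, and the paper explicitly obtains it only by \emph{comparing} its combinatorial decomposition with Borodin's analytic product.

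Relatedly, using Borodin's $z=1$ formula to pin down the labels and the run-length bound is both circular and insufficient: equality of generating functions cannot certify a one-to-one correspondence, nor the structural claims (labels determined by $c$, runs of length at most $\min(r-1,\ell-1)$), and the family of labeled distinct-part partitions with bounded runs is not generated by a product of factors $(q^{a};q^{t})_\infty^{-1}$, so even the numerical ``consistency check'' you propose would not go through. What is missing is a local rule on $\Lambda$ itself that selects which parts go into $\beta$ and why at most $r-1$ (resp.\ $\ell-1$) consecutive weights can be selected. The paper gets this from its slice decomposition and a tiling argument: a slice is a \emph{pivot} when the first box tiled after it sits strictly to the left of the last box tiled before it, and consecutive-weight pivots force the added outer corner strictly down a row (possible at most $r-1$ times) or strictly left a column (at most $\ell-1$ times), which is exactly the run bound. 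Your sketch contains no analogous mechanism, the inverse map is only promised (``integrate back up the columns''), and the proposed induction on $r$ or $\ell$ by peeling off one component partition is never reconciled with the wrap-around inequality $\lambda^{(r)}_j \geq \lambda^{(1)}_{j+c_1}$, so as written the argument does not establish the theorem.
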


The $\beta$'s mentioned in Theorem do not seem to be directly related 
to Borodin's arbitrarily labeled cylindric diagrams~\cite{Borodin},
and the underlying idea is based on growing Ferrers boards~\cite{GEA-E}.  
rather than growing diagrams~\cite{Fomin88, Fomin95}.  
Tingley's bijection~\cite{Tingley, Tingley-Correction} is much more similar to ours,
but there are still significant differences.
Tingley also constructs a bijection between his pairs and Borodin's pairs.

Cylindric partitions with certain distinctness conditions 
were first considered by Bridges and Uncu~\cite{BU}.  
The authors gave generating functions for cylindric partitions into distinct parts 
for profiles $(0,2)$ and $(1,1)$~\cite{KO}.  
For a non-example, the cylindric partition 
$\Lambda=((10,5,4,1),$ $(12,8,5,3),$ $(7,6,4,2))$ with profile $(1,2,0)$ 
does not have distinct parts.  
4 and 5 are repeated.  

Let $\mathcal{D}_c(n)$ denote the number of cylindric partitions of $n$ 
with profile $c$ into distinct parts. 
Our next finding is the generalization of Theorem 10 in~\cite{KO}
to all profiles.  

\begin{theorem}
\label{thmDistPartsGeneral}
  Set 
  \begin{align}
  \nonumber 
    D_c(q) = \sum_{n \geq 0} \mathcal{D}_c(n) q^n.  
  \end{align}
  Then, there exist a polynomial $rg_c(q)$ which depends on the profile $c$,
  $k \in \mathbb{N}$ and doubly indexed complex numbers
  $\alpha_{\cdots}^{\cdots}$, $\beta_{\cdots}^{\cdots}$ 
  such that $D_c(q)$ is the following finite linear combination of 
  infinite products and infinite products multiplied by Lambert series,
  up to a correction by the polynomial $rg_c(q)$.
  \begin{align}
  \nonumber
   D_c(q) = rg_c(q) + & \sum_{i = 1}^{B_0} \alpha_i^0 \; ( - \beta_i^0 q ; q)_\infty \\
  \nonumber
   & + \sum_{i = 1}^{B_1} \alpha_i^1 \; 
    \left[ \left( z \frac{\mathrm{d}}{\mathrm{d}z} \right) 
    ( - \beta_i^1 z q ; q)_\infty \right]_{z = 1} \\ 
  \nonumber 
   & \vdots \\ 
  \nonumber 
   & + \sum_{i = 1}^{B_k} \alpha_i^k \; 
    \left[ \left( z \frac{\mathrm{d}}{\mathrm{d}z} \right)^k 
    ( - \beta_i^k z q ; q)_\infty \right]_{z = 1} 
  \end{align}

\end{theorem}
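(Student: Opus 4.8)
The plan is to combine Theorem \ref{thmCylPtnVsPtnPairsFullCase} with Borodin's product formula (Theorem \ref{theorem-Borodin}) to pin down the generating function for cylindric partitions into distinct parts. By Theorem \ref{thmCylPtnVsPtnPairsFullCase}, every cylindric partition with profile $c$ corresponds bijectively to a pair $(\mu,\beta)$, where $\mu$ is unrestricted and $\beta$ is a partition into labeled distinct parts whose label set and whose maximal run length (bounded by $\min(r-1,\ell-1)$) are dictated by $c$. So first I would describe exactly when the cylindric partition $\Lambda$ associated to $(\mu,\beta)$ has distinct parts: this should translate into a condition involving only $\beta$ together with finitely many small parts of $\mu$. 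The upshot is that one can write $D_c(q)$ as a product of two factors: a "tail'' factor that is essentially $F_c(1,q)$ with finitely many of the smallest Pochhammer factors stripped off (accounting for the unrestricted part $\mu$ once the interference with $\beta$ has been removed), and a "head'' factor that is a finite $q$-polynomial-weighted sum over the admissible labeled distinct-part configurations $\beta$ that can occur with bounded runs.

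Next I would make the head factor explicit. Since the run length in $\beta$ is bounded by some fixed $K=\min(r-1,\ell-1)$ and the labels come from a fixed finite set determined by $c$, the generating function for such $\beta$'s, weighted by $q^{|\beta|}$ and recording size, is a finite sum of building blocks of the form $(-\beta z q;q)_\infty$ and its $z$-derivatives. The idea is: a partition into distinct parts with no run longer than $K$ is obtained from an \emph{arbitrary} partition into distinct parts by a finite inclusion-exclusion over the forbidden runs of length $K+1$; each such correction multiplies the basic product $(-q;q)_\infty$-type series by a polynomial in $q$ and shifts the base, and differentiating $(-\beta z q;q)_\infty$ in $z$ and setting $z=1$ produces precisely the infinite-product-times-Lambert-series shape $\bigl[(z\frac{\mathrm d}{\mathrm dz})^j(-\beta zq;q)_\infty\bigr]_{z=1}$ because $z\frac{\mathrm d}{\mathrm dz}\log(-\beta zq;q)_\infty$ is a Lambert-type series in $q$. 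Collecting all contributions, with $k$ equal to the largest derivative order forced by the run bound, gives the stated linear combination; the leftover finitely supported discrepancy (from the small parts of $\mu$ that had to be handled separately, and from low-degree boundary cases where the product formula and the run-truncation disagree) is absorbed into the polynomial $rg_c(q)$.

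The main obstacle, I expect, is \emph{bookkeeping the interaction between $\mu$ and $\beta$} precisely enough to see that the distinctness condition on $\Lambda$ factors cleanly. A priori the map of Theorem \ref{thmCylPtnVsPtnPairsFullCase} could entangle the parts of $\mu$ and $\beta$ in a way that makes "$\Lambda$ has distinct parts'' not a product condition; the real work is to show that after removing a bounded initial segment, the conditions decouple, so that $D_c(q) = (\text{head polynomial/finite series in } q) \cdot (\text{truncated Borodin product})$, and then to expand the product of a finite $q$-series with an infinite product into the normal form above. A secondary technical point is verifying that each run-truncation correction really does land in the span of the $z$-derivative blocks rather than producing genuinely new transcendental pieces — this is where one uses that $(-\beta zq;q)_\infty$ has logarithmic derivative a Lambert series, so all its $z$-derivatives at $z=1$ stay within "infinite product $\times$ Lambert series.'' I would finish by illustrating the mechanism on the profiles $(0,2)$ and $(1,1)$ to recover Theorem 10 of \cite{KO} as the $k=1$ case.
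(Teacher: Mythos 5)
Your proposal rests on a factorization that does not hold. The distinctness condition on $\Lambda$ does \emph{not} translate into a condition on $\beta$ plus finitely many small parts of $\mu$: writing $\Lambda$ as slices, $\Lambda$ has distinct parts exactly when the distinct slices have weights $1,2,\ldots,n$ with consecutive differences equal to $1$, i.e.\ when \emph{every} integer between $1$ and $\max(\Lambda)$ occurs among the slice weights. Since the slice weights are precisely the parts of $\mu$ together with the parts of $\beta$, this is a covering condition that entangles arbitrarily large parts of $\mu$ with $\beta$; it never decouples after removing a bounded initial segment. Consequently the claimed shape $D_c(q)=(\text{finite head series})\cdot(\text{truncated Borodin product})$ is false: already for profile $(2,0,0)$ the paper computes
\begin{align}
\nonumber
 D_{(2,0,0)}(q) =
 \left( \tfrac{1}{2} + \tfrac{\sqrt{5}}{10} \right) \left( - \tfrac{1 + \sqrt{5}}{2}\, q ; q \right)_\infty
 + \left( \tfrac{1}{2} - \tfrac{\sqrt{5}}{10} \right) \left( - \tfrac{1 - \sqrt{5}}{2}\, q ; q \right)_\infty ,
\end{align}
whose irrational bases $\tfrac{1\pm\sqrt5}{2}$ cannot arise from a $q$-polynomial multiple of any sub-product of the integral, modulus-$t$ Borodin product. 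A secondary gap: your ``finite inclusion--exclusion over forbidden runs of length $K+1$'' is not finite (forbidden runs can occur in arbitrarily many positions), and bounded-run distinct partitions are in any case not the relevant object here.

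The paper's proof goes a different way, and you would need its two key ideas. First, the slice characterization above gives
\begin{align}
\nonumber
  D_c(q) \;=\; \sum_{n \geq 0} \frac{ a_n\, q^{\binom{n+1}{2}} }{ (q;q)_n },
\end{align}
where $a_n$ counts saturated chains of length $n$ in the path diagram (equivalently, length-$n$ walks in the shape transition graph from the shape of zero); the factor $q^{\binom{n+1}{2}}/(q;q)_n$ generates the multiplicities $f_j\geq 1$ of the slices. Second, because the path diagram is eventually periodic with period $r$, the numbers $a_{nr+s}$ satisfy a linear recurrence with constant coefficients, hence are finite combinations of $n^j\gamma^n$; the complex numbers $\beta_i^j$ in the theorem are (roots of) eigenvalues of the associated transfer matrix, not data visible in Borodin's product. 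One then converts each resulting series into the stated normal form using Euler's identity $(-xq;q)_\infty=\sum_{n\ge0}x^nq^{\binom{n+1}{2}}/(q;q)_n$, root-of-unity dissection to extract residues mod $r$, and the operator $\left(z\frac{\mathrm{d}}{\mathrm{d}z}\right)^j$ to produce the Lambert-series factors; the polynomial $rg_c(q)$ absorbs the finitely many initial terms where the recurrence does not yet apply. Your final observation that $z$-derivatives of $(-\beta zq;q)_\infty$ stay within ``product times Lambert series'' is correct, but it is the only ingredient your plan shares with a workable proof.
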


It is routine to calculate 
$\left( z \frac{\mathrm{d}}{\mathrm{d}z} \right)^k ( - \beta z q ; q)_\infty$ 
as the product of $( - \beta z q ; q)_\infty$ and a polynomial in 
\begin{align}
\nonumber 
  \sum_{n \geq 0} \left( \frac{\beta z q^n}{ 1 + \beta z q^n } \right)^j
\end{align}
for $j = 1, 2, \ldots, k$ using logarithmic differentiation.  
The displayed series are examples of Lambert series~\cite{NT-Rama}.  
In fact, this polynomial in Lambert series will be a homogeneous polynomial 
with degree $k$ in some umbral sense~\cite{Umbral}.  

We also prove that for rank $r = 2$, $k$ in Theorem \ref{thmDistPartsGeneral} 
is necessarily zero, hence there are no Lambert series in that case 
(Corollary \ref{corDistPartsRank2}).  

Constructing evidently positive series for $F_c(z, q)$ or $F_c(1, q)$
in the spirit of Andrews-Gordon identities~\cite{GEA-AG}
has been another challenging follow-up problem in cylindric partitions.  
Some examples, and by no means a complete list of, are 
Andrews, Schilling, and Warnaar~\cite{ASW}, 
Feigin, Foda, and Welsh~\cite{FFW}, 
Corteel and Welsh~\cite{CW}, 
Corteel, Dousse, and Uncu~\cite{CDU}, 
Kanade and Russell~\cite{KR-completeASW}, 
Tsuchioka~\cite{Tsu}, 
Uncu~\cite{AU23}, 
and Warnaar~\cite{Warnaar-A2AG, Warnaar-BaileyTree}. 
For more complete lists of references and other connections, e.g. lie-theoretic, 
we refer the reader to~\cite{ASW, CDU, KR-completeASW, Warnaar-A2AG, Warnaar-BaileyTree}.

Our next result is the proof of part of a conjecture in
Corteel, Dousse and Uncu~\cite{CDU}.  
Their approach in~\cite{CDU} is using the functional equation 
derived by Corteel and Welsh~\cite{CW} 
with some computational aid by Ablinger and Uncu~\cite{AbU}.  
We prove that in 
\begin{align}
\label{eqClyPtnGenFuncWithPs}
  F_c(z, q)
  = (1 - z) \sum_{n \geq 0} \frac{ P_{n, c}(q) z^n }{ (q^r; q^r)_n }
  = \sum_{n \geq 0} \frac{ P_{=n, c}(q) z^n }{ (q^r; q^r)_n }, 
\end{align}
both $P_{n, c}(q)$'s and $P_{=n, c}(q)$'s
are polynomials with positive coefficients, 
and $P_{n, c}(1) = P_{=n, c}(1) = \binom{\ell + r-1}{r-1}^n$.  
We also give an effective way of computing $P_{n, c}(q)$'s and $P_{=n, c}(q)$'s.  

Finally, as an application of Theorem \ref{thmCylPtnVsPtnPairsFullCase}, we prove that 
\begin{align}
\label{eqQconjPrepGenFunc}
  F_c(q; z) = \frac{1}{(zq; q)_\infty} 
  \sum_{n \geq 0} \frac{ 
  \left( \widetilde{P}_{n, c}(q) 
    + \displaystyle \sum_{ \Lambda \in \mathcal{MJL}_{n, c} }
      \left( \prod_{j \in \iota(\Lambda)} (1 - q^{rj}) \right) q^{\vert \Lambda \vert} \right)
  z^n }{ (q^r; q^r)_n } 
\end{align}
for effectively calculable $\widetilde{P}_{n, c}(q)$ and $\iota(\Lambda)$.  
$\widetilde{P}_{n, c}(q)$ has positive coefficients and 
$\widetilde{P}_{n, c}(1) = \left(\binom{\ell + r-1}{r-1} - r \right)^n$.  
The collection $\mathcal{MJL}_{n, c}$ of cylindric partitions
is finite for any given pair of $c$ and $n$.  
Although \eqref{eqQconjPrepGenFunc} comes nowhere near 
proving the conjectures in Corteel, Dousse, and Uncu~\cite[Conjecture 4.2, part 2]{CDU} 
and Warnaar~\cite[Conjecture 8.4]{Warnaar-A2AG}
it certainly makes them more intriguing because of the negative 
coefficients in the sum over $\Lambda \in \mathcal{MJL}_{n, c}$.

The rest of the paper is organized as follows.  
In Section \ref{secPrelim}, 
we rephrase the introduction of cylindric partitions 
in Gessel and Krattenthaler~\cite{GesselKrattenthaler}, 
and develop the notions we will use in the proofs.  
In Section \ref{secUnrestrictedCylPtn}, 
we prove Theorem \ref{thmCylPtnVsPtnPairsFullCase} in several steps.  
The reason why the proof is not given all at once is also 
explained there.  
The difference with Tingley's approach~\cite{Tingley, Tingley-Correction}
is also emphasized.
In Section \ref{secDist}, Theorem \ref{thmDistPartsGeneral} is proven, 
followed by some ramifications.  
In section \ref{secCDU}, we prove the positivity in \eqref{eqClyPtnGenFuncWithPs}
and in \eqref{eqQconjPrepGenFunc}, 
along with all necessary constructions.  
The constructions in Section \ref{secCDU} is more reminiscent of
Tingley's construction~\cite{Tingley, Tingley-Correction},
but we state the fundamental difference between the two approaches.
All proofs in this paper are elementary.  
We conclude with some ideas for further research in Section \ref{secComments}.  

\section{Preliminaries}
\label{secPrelim}

This section is mostly a rewriting of the introduction in 
Gessel and Krattenthaler~\cite{GesselKrattenthaler}.  

A visualization of cylindric partitions is
replacing each number in the cylindric partition by a vertical stack
consisting of that many unit cubes~\cite{Warnaar-A2AG}, 
then aligning and shifting as imposed by the profile.  
The cylindric partition $\Lambda=((10,5,4,1),(12,8,5,3),(7,6,4,2))$
with profile $(1,2,0)$ will be shown as in Figure \ref{figStackOfCubes}.
The repeated first row is shown as faded.
\begin{figure}
  \centering
    \includegraphics[scale=0.3]{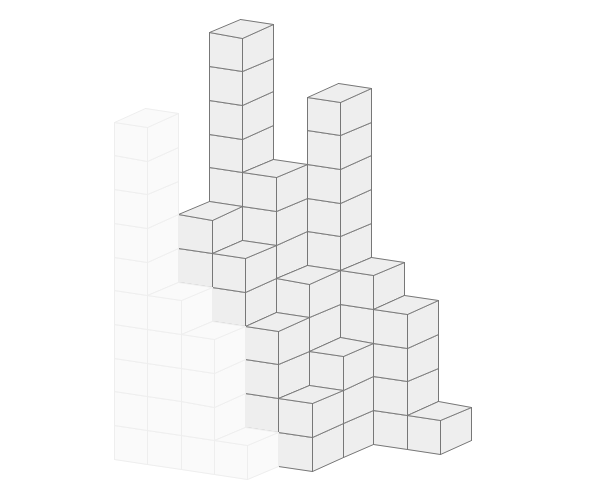}
  \caption{ A visualization of a cylindric partition, the repeated first row is shaded. }
\label{figStackOfCubes}
\end{figure}

Given two cylindric partitions $\Lambda$ and $M$ having the same profile, 
we can declare $\Lambda \geq M$ if 
$\lambda^{(i)}_j \geq \mu^{(i)}_j$ for all pairs $(i, j)$.  
The missing entries may be filled in with zeros, as necessary to make comparisons.  
This defines a partial ordering on cylindric partitions.  
One also finds it convenient to consider the empty cylindric partition $E$ 
which is the unique partition satisfying $\Lambda \geq E$ for all cylindric partitions $\Lambda$.  

One defines $\Lambda+M$ as the componentwise addition, 
taking the missing entries as zeros.  
Is is easy to see that $\Lambda+M$ is another cylindric partition with the same profile,
because the inequalities $\lambda^{(\cdots)}_{\cdots} \geq \lambda^{(\cdots)}_{\cdots}$
and $\mu^{(\cdots)}_{\cdots} \geq \mu^{(\cdots)}_{\cdots}$ 
can be added side by side while preserving the direction of the inequality.  
It is also easy to see that 
$\mathrm{max}(\Lambda+M) \leq \mathrm{max}(\Lambda) + \mathrm{max}(M)$.

One has to be more careful with the componentwise subtraction $\Lambda - M$.  
Even if the resulting object happens to have non-negative entries, 
it has no reason to be a cylindric partition with the same attributes.  
Easy counterexamples may be found, aided by skew diagrams.  
An important exception where the difference is still a cylindric partition 
is as follows.  

Choose and fix a profile $c=(c_1, \ldots, c_r)$.
Given a cylindric partition $\Lambda$ 
with profile $c$ and $m = \mathrm{max}(\Lambda)$; 
set $\Lambda^{(m)} = \Lambda$
and consider the positions $(i, j)$ such that $\lambda_{(i, j)} = m$.  
Form the cylindric partition ${}_m\Sigma$ which consists of 1's in these positions, 
and then subtract 1 from the entries in each of these positions.  
After this operation $\Lambda^{(m)}$ becomes $\Lambda^{(m-1)}$, 
and $\vert {}_m\Sigma \vert + \vert \Lambda^{(m-1)} \vert = \vert \Lambda \vert$.  

To see that ${}_m\Sigma$ and $\Lambda^{(m-1)}$
are cylindric partitions with the same profile $c$,
replace $m$'s with 1's and the strictly smaller parts with zeros in the 
inequalities $\lambda^{(...)}_{(...)} \geq \lambda^{(...)}_{(...)}$, 
and examine case by case.  
We will call cylindric partitions consisting of only 1's \emph{slices}.  
Tingley~\cite{Tingley, Tingley-Correction} also momentarily uses the term slice,
but for vertical slices rather than horizontal slices.
The vertical slices can be read off directly
from the rudimentary representation of a cylindric partition.

We apply the same procedure on $\Lambda^{(m-1)}$ to extract ${}_{m-1}\Sigma$.  
Since all $m$'s in $\Lambda^{(m)}$ became $(m-1)$'s 
after the extraction of ${}_{m}\Sigma$, 
we have ${}_{m-1}\Sigma \geq {}_{m}\Sigma$.  
We keep doing this until we reach $\Lambda^{(0)} = E$, 
the empty cylindric partition with profile $c$, 
and collect the slices ordered as

\begin{align}
\label{PO_slices}
  {}_{1}\Sigma \geq {}_{2}\Sigma \geq \cdots \geq {}_{m}\Sigma > E
\end{align}


Now, fix a $j$ with $1 \leq j \leq m$ and consider the slice 
${}_j\Sigma = ( {}_j\sigma^{(1)}, \ldots, {}_j\sigma^{(r)} )$.
All ${}_j\sigma^{(i)}$'s are partitions into 1's, 
so $\vert {}_j\sigma^{(i)} \vert = l({}_j\sigma^{(i)})$.  
It follows from \eqref{PO_slices} that for each fixed $i = 1, \ldots, r$
\begin{align}
\nonumber 
  l( {}_m\sigma^{(i)} ) \geq \cdots \geq l( {}_1\sigma^{(i)} ) \geq 0.   
\end{align}
In fact, the above inequalities are equivalent to \eqref{PO_slices}.  
At the moment, $l( {}_1\sigma^{(i)} ) > 0$ for at least one $i$.  

It is easier to follow the procedure visually.  
We consider the cylindric partition with profile $c = (1,1,1)$
$\Lambda = ( (5, 4), (8, 2), (7, 5, 1) )$ and find its slices in Figure \ref{figSlices}.
\begin{figure}
  \centering
\includegraphics[scale=0.3]{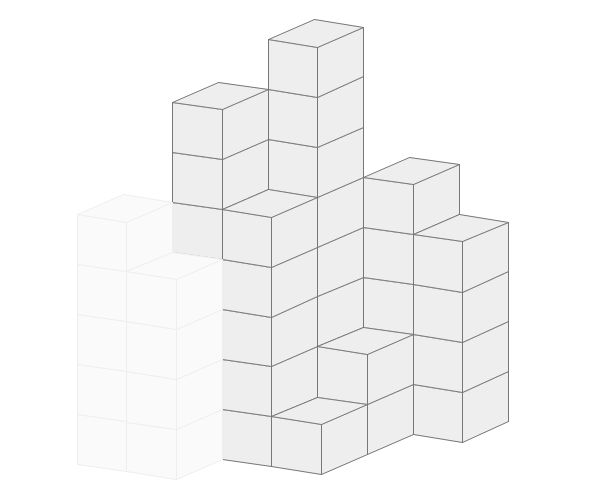}
\raisebox{2cm}{$\longrightarrow$}
\includegraphics[scale=0.3]{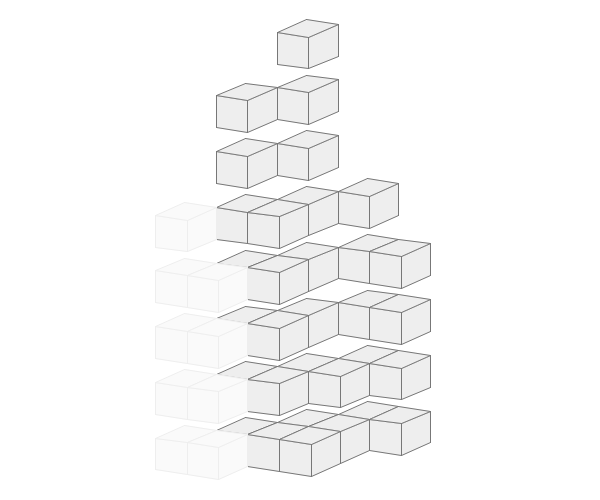}
  \caption{ Slices of a cylindric partition. }
\label{figSlices}
\end{figure}
Since all non-zero entries are ones, 
it is convenient to view the slices as skew diagrams.  
The eight slices are ordered as follows.  
We omit the repeated first row, 
because the profile is clear.  
\begin{align}
\nonumber 
\vcenter{ \hbox { \resizebox{0.4\width}{0.4\height}{ 
  \ydiagram{2+2, 1+2, 0+3} } } } 
\geq 
\vcenter{ \hbox { \resizebox{0.4\width}{0.4\height}{ 
  \ydiagram{2+2, 1+2, 0+2} } } } 
\geq 
\vcenter{ \hbox { \resizebox{0.4\width}{0.4\height}{ 
  \ydiagram{2+2, 1+1, 0+2} } } } 
\geq 
\vcenter{ \hbox { \resizebox{0.4\width}{0.4\height}{ 
  \ydiagram{2+2, 1+1, 0+2} } } } 
\geq 
\vcenter{ \hbox { \resizebox{0.4\width}{0.4\height}{ 
  \ydiagram{2+1, 1+1, 0+2} } } } 
\geq 
\vcenter{ \hbox { \resizebox{0.4\width}{0.4\height}{ 
  \ydiagram{2+0, 1+1, 0+1} } } } 
\geq 
\vcenter{ \hbox { \resizebox{0.4\width}{0.4\height}{ 
  \ydiagram{2+0, 1+1, 0+1} } } } 
\geq 
\vcenter{ \hbox { \resizebox{0.4\width}{0.4\height}{ 
  \ydiagram{2+0, 1+1, 0+0} } } } 
>
E
\end{align}
In the presence of empty rows, 
we can emphasize the profile as the following.  
We rewrite the inequality for the three smallest slices.  
\begin{align}
\nonumber 
\vcenter{ \hbox{ \resizebox{0.4\width}{0.4\height}{
  \ydiagram[*(white)]{ 3+0, 2+1, 1+1 } 
  *[*(darkgray)]{3,3,2}
  } } }
  \geq 
\vcenter{ \hbox{ \resizebox{0.4\width}{0.4\height}{
  \ydiagram[*(white)]{ 3+0, 2+1, 1+1 } 
  *[*(darkgray)]{3,3,2}
  } } }
  \geq 
\vcenter{ \hbox{ \resizebox{0.4\width}{0.4\height}{
  \ydiagram[*(white)]{ 3+0, 2+1, 1+0 } 
  *[*(darkgray)]{3,3,1}
  } } }
  >
\vcenter{ \hbox{ \resizebox{0.4\width}{0.4\height}{
  \ydiagram[*(white)]{ 3+0, 2+0, 1+0 } 
  *[*(darkgray)]{3,2,1}
  } } }
  = E
\end{align}
The dark gray boxes are actually not there, 
and the white boxes are 1's.  

Next, we describe the \emph{shape} of a slice.  
For the sake of this part of discussion, 
the weight per se is immaterial.  
Therefore; we augment all slices 
with the dark gray Young diagram in place for the empty cylindric partition
as in the last displayed chain of containment, 
discard the colors of boxes, 
and end up with an intermediate Young diagram of a partition 
into exactly $r$ parts, where $r$ is the rank.
Then, we delete the last part, 
and subtract this part from all other parts.  
This partition into at most $r-1$ parts is called the \emph{shape} of the slice.
We find it convenient to keep the rank information, 
so we pad these partitions with zeros as necessary.  
For example, in the second to the last containment,
the slices, including the empty slice, 
have shapes
\begin{align}
\nonumber 
  (1, 0), (2, 1), (2, 0), (2, 0), (1, 0), (1, 1), (1, 1), (2, 2), (2, 1),  
\end{align}
respectively.  

As the profile governs the ``left ends'' of slices, 
so does the shapes their ``right ends''.  
The convention has been to represent profiles by compositions (called $c$, $d$, etc.).  
We represented the shapes by partitions (called $\sigma$, $\tau$, etc.).  

Notice that the empty cylindric partition $E$ has shape implied by the profile $c$.  
We will call this the \emph{shape of zero}.  
Shape of zero is unique in any fixed profile.  
In particular, if the profile is $c = (c_1, c_2, \ldots, c_r)$,
then the shape $\sigma$ of the empty partition is 
given by
\begin{align}
\nonumber 
  \sigma = ( c_r + c_{r-1} + \cdots + c_2,
   c_r + c_{r-1} + \cdots + c_3, \ldots, c_r + c_{r-1}, c_r ).
\end{align}
Since $c_1 + c_2 \cdots + c_r = \ell$,
parts of $\sigma$ are at most $\ell$.  
As such, $\sigma$ is one of the $\binom{\ell+r-1}{r-1}$
partitions enumerated by $\begin{bmatrix} \ell+r-1 \\ r-1 \end{bmatrix}$
\cite{TheBlueBook, GEA-E}.
Here, $\begin{bmatrix} n \\ k \end{bmatrix}$ $= \frac{ (q; q)_n }{ (q; q)_k (q; q)_{n-k} }$
is the $q$-binomial coefficient~\cite{TheBlueBook, GEA-E, GR}.
It follows that the shapes of zeros are in 1-1 correspondence 
with the set of possible profiles.  
This correspondence carries over to the set of possible shapes.  

As long as we know the level $\ell$, we can switch back and forth 
between compositions and partitions, 
as done in the above example.  
Either may be preferred, and it will be clear from the context which one is used.  


We recall once more that the profile $c$ is fixed.  
We order the slices by inclusion, and draw their 
Hasse diagram~\cite{Birkhoff-Lattices}
sideways in such a way that slices which have the same weight are vertically aligned, 
and slices having the same shape are horizontally aligned.  
Moreover, the shapes are lexicographically ordered.  
For convenience, we include the shape of zero.  
This carries the profile information, as well.  
For the profile $c = (1,1,1)$, the beginning of this Hasse diagram
is shown in Figure \ref{figHasse}.
\begin{figure}
  \centering
    \includegraphics[scale=0.8]{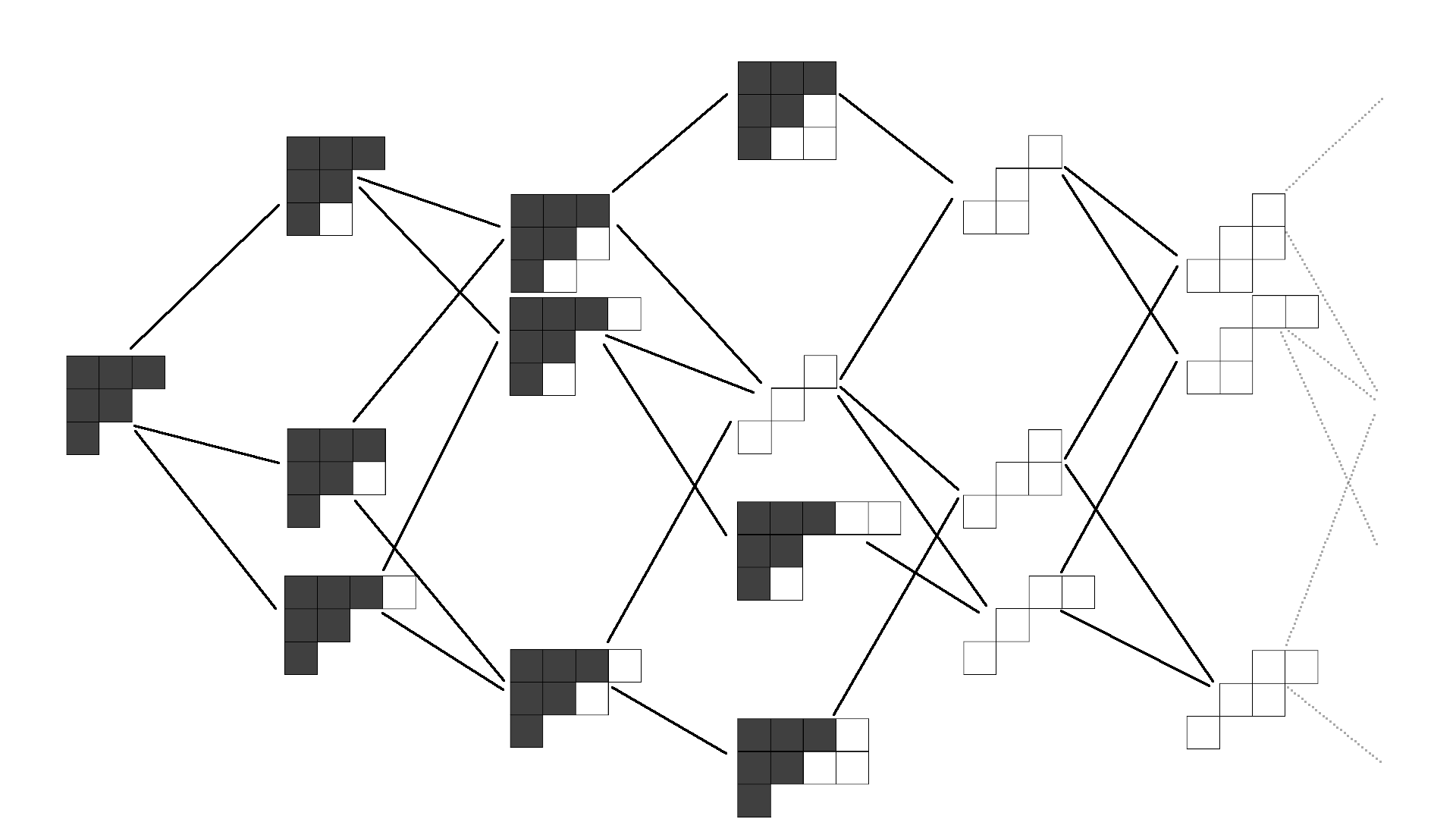}
  \caption{The Hasse diagram corresponding to the profile $c=(1,1,1)$. }
\label{figHasse}
\end{figure}
It is not difficult to see that there can be at most one slice
having a given shape and weight.  
For instance, the slices with weight 2 
in the above examples can only have shapes $(1, 1)$, $(2, 0)$, and $(3, 2)$; 
but not, say, $(2, 1)$.  
We will represent any cylindric partition in this diagram 
with the multiplicities of its slices.  
To do that, we replace the slices with nodes, 
indicate the weights on the horizontal direction, 
and the shapes on the vertical direction. 
The last example will look like Figure \ref{figPathDiag}.
\begin{figure}
  \centering
    \includegraphics[scale=0.1]{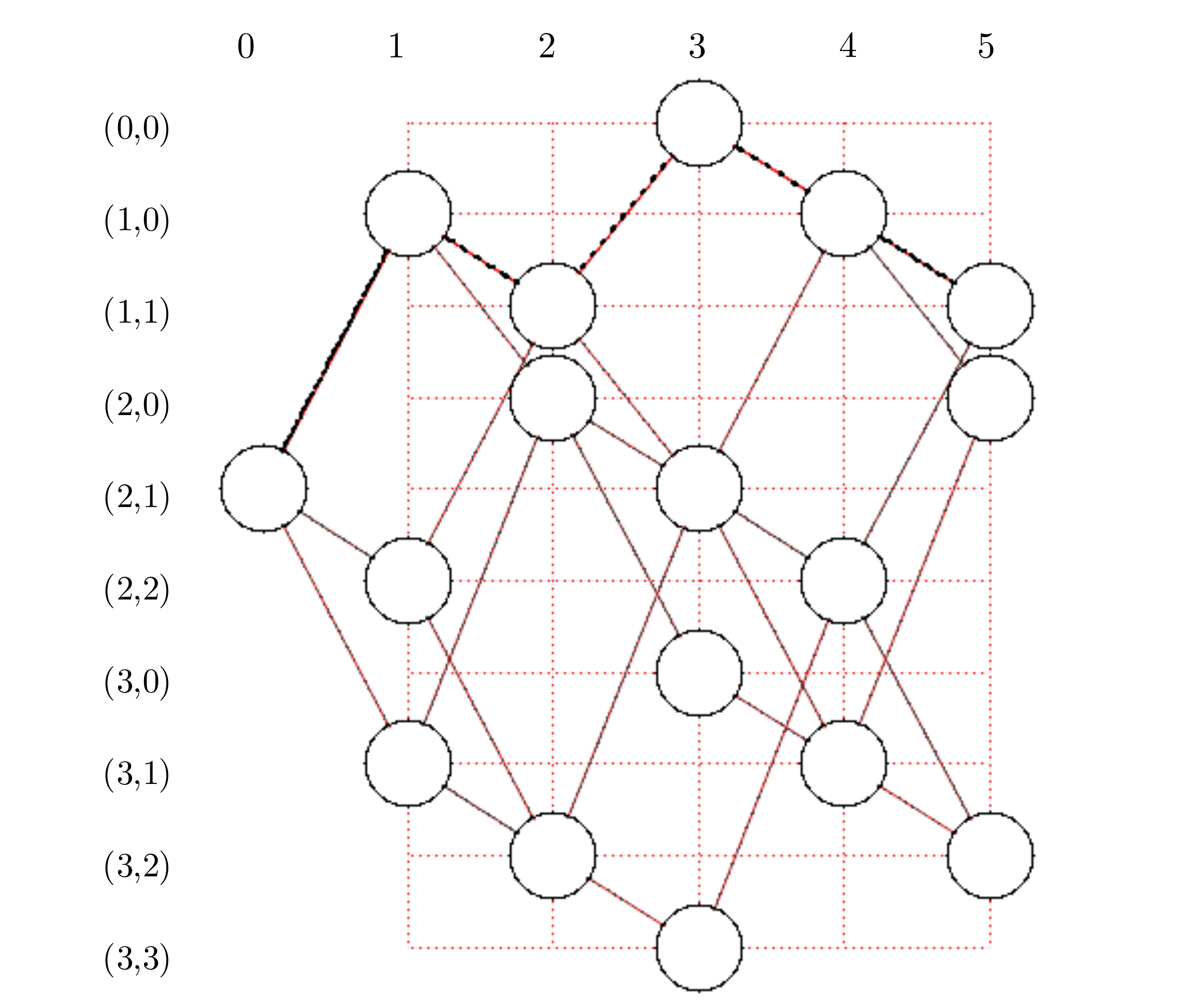}
    \raisebox{3cm}{\ldots}
  \caption{The path diagram corresponding to the profile $c=(1,1,1)$. }
\label{figPathDiag}
\end{figure}
We will call this the \emph{path diagram} associated with the profile $c$.  
The reason for the term is that we will draw and examine paths on these diagrams.  
The nodes will be used to record the (non-zero) multiplicities of the slices.  
For example, the cylindric partition $\Lambda = ( (5, 4), (8, 2), (7, 5, 1) )$ 
with profile $c=(1,1,1)$, the slice decomposition of which is shown before, 
will be registered as in Figure \ref{figCylPtnInPathDiag}.
\begin{figure}
  \centering
  \includegraphics[scale=0.1]{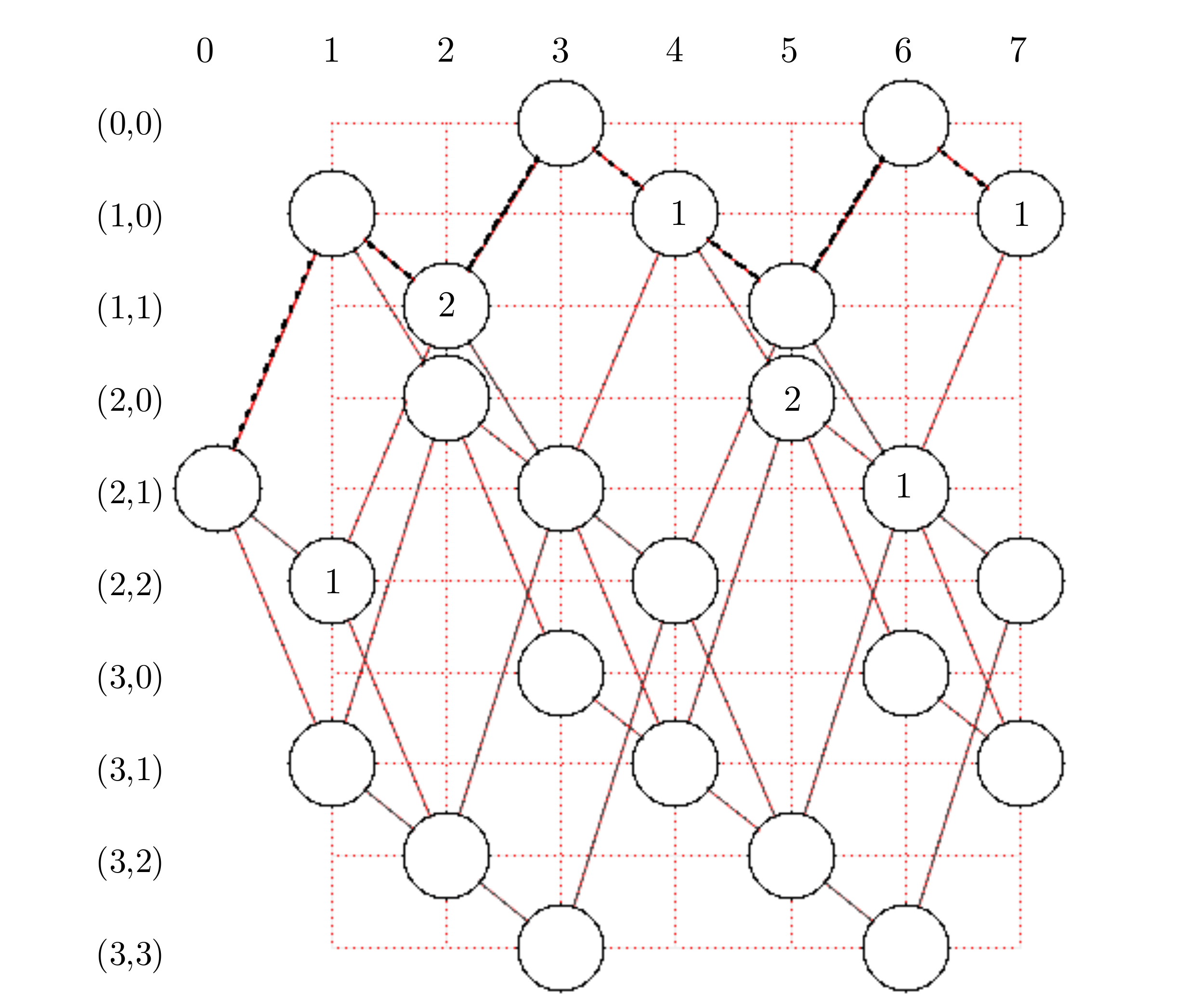}
  \raisebox{3cm}{\ldots}
  \caption{A cylindric partition shown in the path diagram. }
\label{figCylPtnInPathDiag}
\end{figure}

It is straightforward to extend the definition 
of an \emph{outer corner} of a Young diagram~\cite{GEA-E, CW} 
to slices of a cylindric partition with profile $c$.  
All one has to do is to adjust according to the interplay between the 
top and the bottom rows.  
Then, one can see that there is an edge between 
slice ${}_{1}\Sigma$ and ${}_{2}\Sigma$ 
if and only if ${}_{2}\Sigma$ can be obtained from ${}_{1}\Sigma$ 
by an addition of an outer corner.  
It is also clear that 
$\vert {}_{2}\Sigma \vert - \vert {}_{1}\Sigma \vert = 1$ 
in this case.  
Since adding outer corners concerns shapes only, 
we can have a directed graph with shapes as nodes.  
For each possible addition of an outer corner will an edge be drawn.  
When one highlights the shape of zero, 
this finite graph compactly carries the same information as the path diagram.
The described graph will be called the \emph{shape transition graph} 
of slices of cylindric partitions with profile $c$.  
For instance, the shape transition graph 
associated with profile $c=(1,1,1)$ is as in Figure \ref{figShapeTransitionGraph}.
\begin{figure}
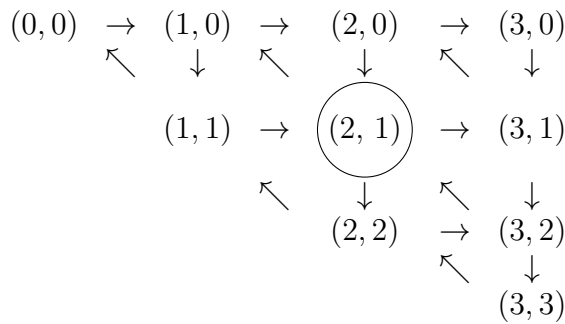

  \centering
  $\begin{array}{ccccccc}
  (0, 0) & \rightarrow & (1, 0) & \rightarrow & (2, 0) & \rightarrow & (3, 0) \\
  & \nwarrow & \downarrow & \nwarrow & \downarrow & \nwarrow & \downarrow \\
  & & (1, 1) & \rightarrow & \circled{(2, 1)} & \rightarrow & (3, 1) \\
  & & & \nwarrow & \downarrow & \nwarrow & \downarrow \\
  & & & & (2, 2) & \rightarrow & (3, 2) \\
  & & & & & \nwarrow & \downarrow \\
  & & & & & & (3, 3)
  \end{array}$
  \caption{The shape transition graph associated with profile $c=(1,1,1)$.  }
\label{figShapeTransitionGraph}
\end{figure}
The construction of the shape transition graph, with a little extra effort, 
also shows that the path diagram is eventually periodic
in the increasing weight direction with period equal to the rank $r$.
It is difficult, if not impossible, to record the weights 
in a shape transition graph.  
Therefore, we still need path diagrams.  
We will use the shape transition graphs for other purposes
in Sections \ref{secDist} and \ref{secCDU}.

It must be mentioned that one cannot avoid crossing edges in 
the path diagrams for rank $r \geq 3$
and in the shape transition graphs for rank $r \geq 4$,
but for small levels $\ell$.  
However, the rank $r = 2$ case has distinguishedly nice path diagrams
and shape transition graphs.  
For instance, the path diagram associated with profile $c = (4, 3)$
is shown in Figure \ref{figPathDiag-4-3}.
\begin{figure}
  \centering
  \includegraphics[scale=0.1]{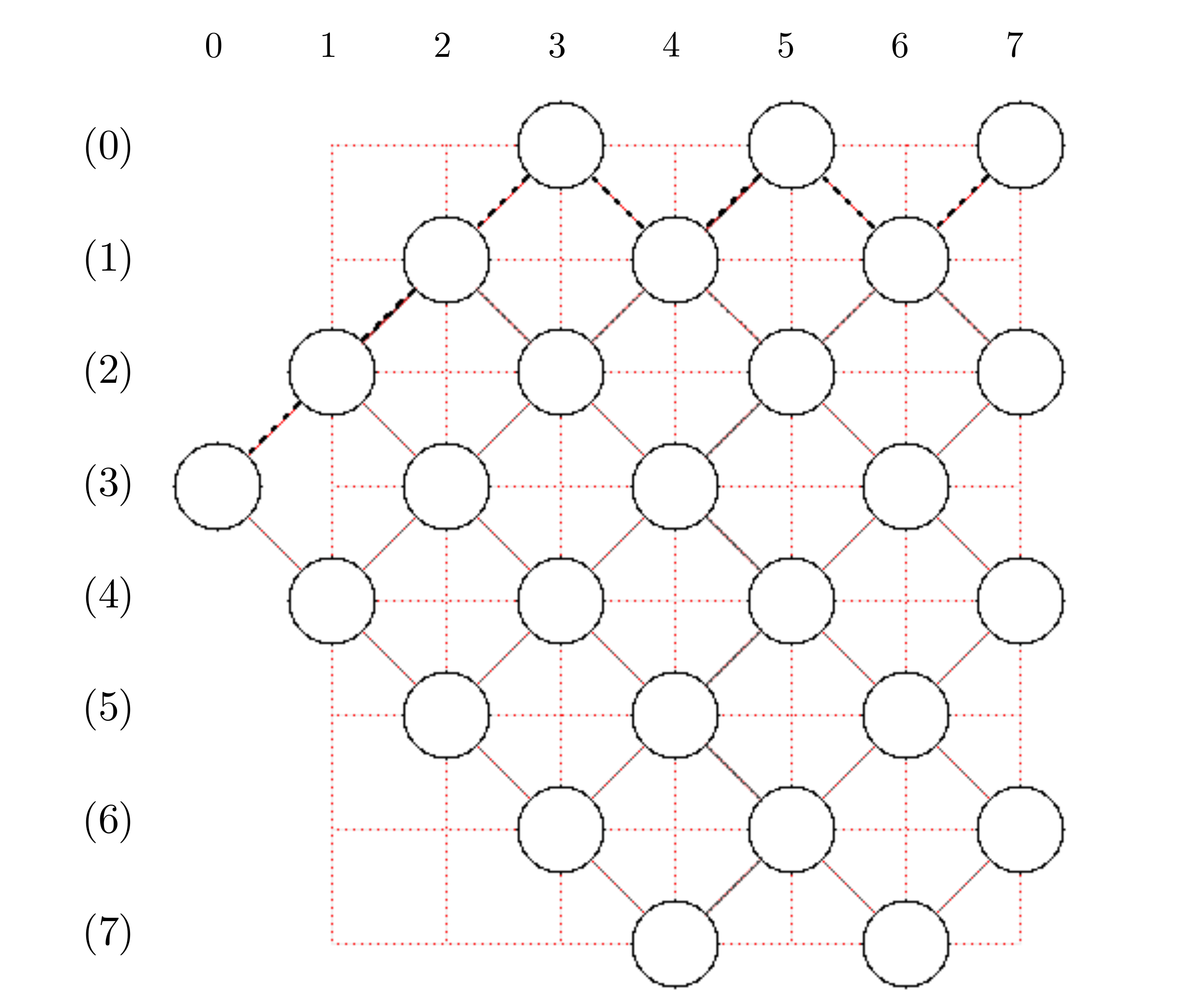}
  \raisebox{3cm}{\ldots}
  \caption{The path diagram corresponding to the profile $c=(4,3)$. }
\label{figPathDiag-4-3}
\end{figure}
The shape transition graph corresponding to profile $c=(4,3)$ is
\begin{align}
\nonumber 
  (0) {\curvearrowleft \above 0pt \text{\rotatebox[origin=c]{180}{$\curvearrowleft$}} }
  (1) {\curvearrowleft \above 0pt \text{\rotatebox[origin=c]{180}{$\curvearrowleft$}} }
  (2) {\curvearrowleft \above 0pt \text{\rotatebox[origin=c]{180}{$\curvearrowleft$}} }
  \circled{(3)} 
    {\curvearrowleft \above 0pt \text{\rotatebox[origin=c]{180}{$\curvearrowleft$}} }
  (4) {\curvearrowleft \above 0pt \text{\rotatebox[origin=c]{180}{$\curvearrowleft$}} }
  (5) {\curvearrowleft \above 0pt \text{\rotatebox[origin=c]{180}{$\curvearrowleft$}} }
  (6) {\curvearrowleft \above 0pt \text{\rotatebox[origin=c]{180}{$\curvearrowleft$}} }
  (7). 
\end{align}

To find the cylindric partition with profile $c$, shape $\tau$, 
and the minimum possible weight, we do the following.  
Suppose the profile $c$ corresponds to shape $\sigma$ (shape of zero).  
We want to cut out the shape $\sigma$ from inside $\tau$.  
If $\tau_j \geq \sigma_j$ for all $j = 1, 2, \ldots, (r-1)$, no problem.
We can excise the squares making up $\sigma$.
The weight of this minimal partition is $\vert \tau \vert - \vert \sigma \vert$.  
If not, we retry after appending a column of height $k$ on the left of $\tau$.  
Parts of $\tau$ are practically increased by 1, 
but in order to retain the shape, an extra square is put in the $r$th row.
Then, we check if $\tau_j+1 \geq \sigma_j$ for all $j = 1, 2, \ldots, (r-1)$.
If so, again, we're done.  
In this case, the weight of this minimal cylindric partition is 
$r + \vert \tau \vert - \vert \sigma \vert$.
If not, we append a second column of height $r$ on the left of $\tau$ etc.
It is clear that we need the smallest non-negative integer $h$ such that 
$\tau_j + h \geq \sigma_j$ for all $j = 1, 2, \ldots, (r-1)$.
Thus, $h = $ $\mathrm{max}\left\{ 0, \sigma_1 - \tau_1 \right.$, 
\ldots, $\left. \sigma_{r-1} - \tau_{r-1} \right\}$.
Then, the minimal partition has weight $rh + \vert \tau \vert - \vert \sigma \vert$.
We define this as $\Delta(\sigma, \tau)$, 
or by switching to the corresponding compositions, $\Delta(c, d)$.  
To be precise, for compositions $c = (c_1, \ldots, c_r)$ and $d = (d_1, \ldots, d_r)$,
\begin{align}
\nonumber 
  \Delta(c, d) & = (r-1)(d_r - c_r) + (r-2)(d_{r-1} - c_{r-1}) + \cdots + (d_2 - c_2)  \\
\label{defDeltaOp}
    & + r \; \mathrm{max}\left\{ 0, c_r - d_r, (c_r + c_{r-1}) - (d_r + d_{r-1}),
    \ldots, (c_r + \cdots + c_2) - ( d_r + \cdots + d_2 ) \right\}.
\end{align}
One can readily verify that $\Delta(c,c) = 0$, 
and $\Delta(c,d)$ does not necessarily equal to $\Delta(d,c)$.  
The careful reader will have noticed that this is nothing but a micromanagement 
on shapes (of plane partitions) in Gessel and Krattenthaler~\cite{GesselKrattenthaler}.  
For example, for rank $r = 3$, 
$\Delta((1,1,1),$ $(0,0,2)) = 1$ and $\Delta((0,0,2),$ $(1,1,1)) = 2$ 
is shown below, where the excess squares are marked by $\times$.  
\begin{align}
\nonumber 
\vcenter{ \hbox { \resizebox{0.4\width}{0.4\height}{ 
    \ytableausetup{notabloids}
    \begin{ytableau}
    \none & \none & \\
    \none &  & \times \\
    \phantom{0}
    \end{ytableau}
   } } },  
\vcenter{ \hbox { \resizebox{0.4\width}{0.4\height}{ 
    \ytableausetup{notabloids}
    \begin{ytableau}
    \none &  &  & \times \\
    \none &  &  \\
    \phantom{0} & \times 
    \end{ytableau}
   } } }. 
\end{align}

\section{Decomposition of unrestricted cylindric partitions}
\label{secUnrestrictedCylPtn}

Although we will prove the general case later, 
we introduce the approach on cylindric partitions with profile $(2, 1)$.  
Rank 2 cylindric partitions yield clearer pictures, 
and the proofs can mostly be done on the path diagrams.  
This will no longer be the case for rank $r \geq 3$ cylindric partitions.
Also, the profile $(2, 1)$ case immediately generalizes to the 
profile $(a, b)$ case.  
Lastly, $(2, 1)$ is a tribute 
to Rogers-Ramanujan identities~\cite{RR, Borodin}.  

\begin{prop} \label{c=(2,1)RR1}
	Let $\Lambda$ be a cylindric partition with profile $c=(2,1)$.
	Then, it can be decomposed into a unique pair of partitions $(\mu,\beta)$ 
	such that $\mu$ is an ordinary partition 
	and $\beta$ is a partition in which adjacent parts differ by at least $2$. 
	Conversely, if $(\mu,\beta)$ is a pair of partitions 
	as described in the previous sentence, 
	then it corresponds to a unique cylindric partition of profile $c=(2,1)$. 
	Hence, we have:
	\begin{equation} \label{gen.func.c=(2,1)}
		F_{(2,1)}(1,q)=\frac{1}{(q;q)_\infty} 
          \left( \sum_{n\geq0}\frac{q^{n^2}}{(q;q)_n}\right) .
	\end{equation}
\end{prop}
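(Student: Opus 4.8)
The plan is to build the bijection directly on the path diagram / slice decomposition developed in Section~\ref{secPrelim}. For profile $c=(2,1)$ the shape of zero is $\sigma=(c_3+c_2,c_3)$—wait, here $r=2$, so the shape of zero is the single-part partition $\sigma=(c_2)=(1)$, and shapes are partitions with at most $r-1=1$ part, i.e. single nonnegative integers. The shape transition graph is the half-line $(0)\to(1)\to(2)\to\cdots$ with one forward edge and one backward edge between consecutive nodes (as in the figure for $c=(4,3)$), the node $(1)$ being the shape of zero. Thus a cylindric partition $\Lambda$ of profile $(2,1)$ is recorded in the path diagram as a sequence of nonnegative multiplicities attached to the nodes; equivalently, as the nonincreasing sequence of shapes ${}_m\Sigma\ge\cdots\ge{}_1\Sigma$ together with how many times each slice is repeated. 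First I would make precise how the weight of a slice of shape $(s)$ depends on $s$ and on its horizontal position (period $r=2$) using the computation of $\Delta$ and the minimal-weight construction already given; the key point is that moving one step "right" in the shape graph adds $1$ to the weight, moving one step "down"/"left" also changes the weight in a controlled way, so the total weight of $\Lambda$ splits as a sum over the slices.

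Next I would read off the two partitions. The partition $\beta$ should record the "shape data" of the distinct slices: because between consecutive weights the shape can increase by at most one outer corner, and because of the rank-$2$ periodicity, the successive distinct shapes that actually occur force a gap condition. Concretely, I expect $\beta$ to be assembled from the positions (weights) at which the shape strictly increases, and the rank-$2$ geometry of the path diagram forces consecutive such positions to differ by at least $2$—this is exactly the Rogers--Ramanujan difference-$2$ condition, and it is the combinatorial heart of the statement. The partition $\mu$ then absorbs all the remaining freedom: the multiplicities of repeated slices, which are unconstrained and hence assemble into an arbitrary partition. I would verify the map is well defined (the claimed difference-$2$ condition really holds), injective (distinct $\Lambda$ give distinct $(\mu,\beta)$), and surjective (every such pair is realized) by exhibiting the inverse: given $\beta$ with parts differing by $\ge 2$, lay down the forced sequence of strictly increasing slices at the weights dictated by $\beta$; given $\mu$, distribute its parts as extra multiplicities. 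The weight bookkeeping must match on the nose, $|\Lambda|=|\mu|+|\beta|$ up to the fixed additive constant coming from the shape of zero (which is $0$ here since $E$ itself has weight $0$).

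The generating function identity~\eqref{gen.func.c=(2,1)} then follows formally: $\sum_{\mu}q^{|\mu|}=\frac{1}{(q;q)_\infty}$, and $\sum_{\beta}q^{|\beta|}$ over partitions whose adjacent parts differ by at least $2$ is the Rogers--Ramanujan sum $\sum_{n\ge 0}\frac{q^{n^2}}{(q;q)_n}$ (parts $\ge$ a staircase contributing $q^{n^2}$, with $\frac{1}{(q;q)_n}$ for the remaining slack), so the product of the two generating functions is the right-hand side. I would also remark that $F_{(2,1)}(1,q)$ is exactly $\sum_{\Lambda\in\mathcal P_{(2,1)}}q^{|\Lambda|}$, so the bijection plus these two classical sums is all that is needed.

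The step I expect to be the main obstacle is pinning down the difference-$2$ condition rigorously: one must show that the interaction between the top and bottom rows of a rank-$2$ cylindric partition of profile $(2,1)$—i.e. the way an outer corner can be added to a slice, subject to $\lambda^{(2)}_j\ge\lambda^{(1)}_{j+2}$ and $\lambda^{(1)}_j\ge\lambda^{(2)}_{j+1}$—forces at least two weight-steps between successive shape increases, and, conversely, that no stronger restriction is imposed (so that every difference-$\ge 2$ partition is attainable and the multiplicities stay completely free). Making this airtight will require carefully tracking which nodes of the path diagram are "on the spine" versus "off to the side," but once the path-diagram geometry for rank $2$ is described cleanly (as the excerpt promises it can be), the rest is routine.
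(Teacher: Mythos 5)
Your overall frame (slice decomposition, path diagram, $\beta$ carrying the chain/shape data, $\mu$ the leftover weights, then multiplying $1/(q;q)_\infty$ by the Rogers--Ramanujan sum) is the same strategy the paper uses, but the step you yourself flag as the main obstacle is exactly where your proposed definition of $\beta$ breaks down, and it is not repairable as stated. Taking $\beta$ to be ``the weights at which the shape strictly increases'' does not produce parts differing by at least $2$. Concretely, for profile $(2,1)$ take $\Lambda=((2,1),\varnothing)$: its slices are the weight-$1$ slice of shape $(2)$ (a single box in the top row) and the weight-$2$ slice of shape $(3)$ (two boxes in the top row), so the shape strictly increases at the two consecutive weights $1$ and $2$, and your rule would output $\beta=(2,1)$, violating the difference-$2$ condition. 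The paper's resolution is a genuinely different mechanism: complete the occurring slices to an \emph{infinite} path in the path diagram by taking upper envelopes of the mesh of paths between consecutive occurring slices (plus an oscillating tail), so the path has exactly one node of every weight, and let $\beta$ consist of the \emph{pivots}, the downward spikes of shape $(2)$ or $(3)$ of that path (local maxima of shape along it). Pivots cannot sit at consecutive weights because each is preceded by a shape-increasing edge and followed by a shape-decreasing edge; they are forced to occur in $\Lambda$; and the substantive lemma---which your sketch does not supply---is that the pivots determine the whole path uniquely, which is what makes the inverse map well defined. In the example above only the weight-$2$ slice is a pivot, so $\beta=(2)$ and the weight-$1$ slice is pushed into $\mu$.

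Relatedly, $\mu$ is not ``the multiplicities of repeated slices'': it consists of the weights of all slices remaining after removing one copy of each pivot, so it also contains distinct non-pivot slices (as in the example). The reason $\mu$ can be an arbitrary partition, and the reason the reconstruction of $\Lambda$ from $(\mu,\beta)$ is unique, is precisely that the completed path has exactly one node of each weight $n$, so an arbitrary multiset of weights can be reinstalled along it in exactly one way; without the pivot/upper-envelope construction you have neither the difference-$\geq 2$ condition nor an inverse map, so the proposal has a genuine gap at its core. (Two smaller points: for this profile the shape transition graph is the finite chain on shapes $(0),(1),(2),(3)$, not an unbounded half-line, since parts of a shape are at most $\ell=3$; and shapes alternate parity with the weight, so even stating ``the shape strictly increases'' requires care.)
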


\begin{proof}

Given an arbitrary but fixed cylindric partition $\Lambda$ with profile $(2, 1)$, 
decompose $\Lambda$ into its slices, and tally the slices in the path diagram.  
The path diagram associated with the profile $c=(2, 1)$
is shown in Figure \ref{figRR_diag0}.
Notice that $(1)$ is the shape of zero, 
slices of odd weight may have shapes $(0)$ or $(2)$, 
and slices of even weight may have shapes $(1)$ or $(3)$.  
This is not counterintuitive if we remember that 
slices are in fact skew shapes, and the cutout partition $(1)$ 
from the left end switches parities.  
\begin{figure}
  \centering
  \includegraphics[scale=0.1]{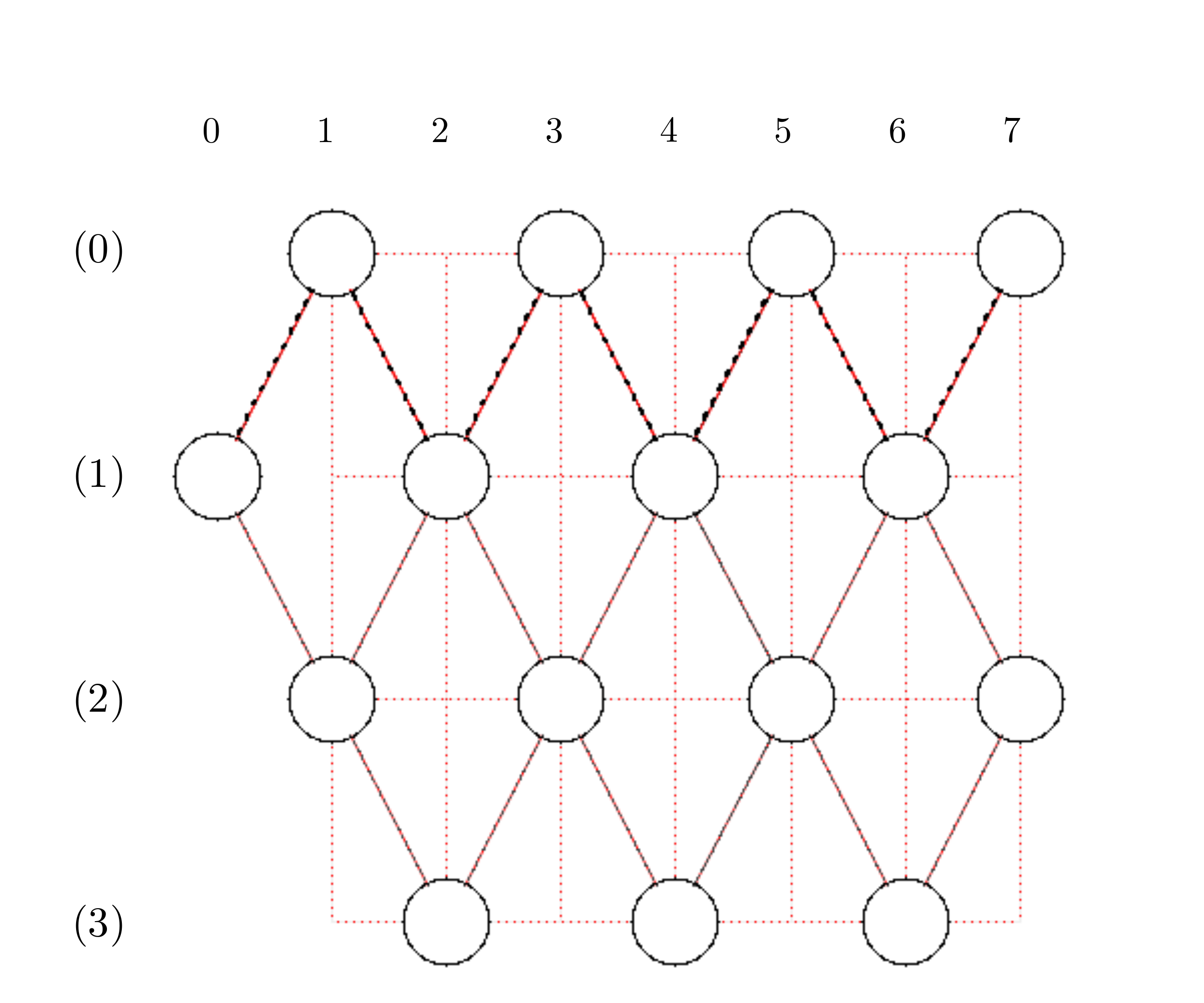}
  \raisebox{3cm}{\ldots}
  \caption{The path diagram corresponding to the profile $c=(2, 1)$. }
\label{figRR_diag0}
\end{figure}
Let ${}_{1}\Sigma$, \ldots, ${}_{m}\Sigma$ be the appearing slices.
They are distinct, and they may be used one or more times.  
In addition, take ${}_{0}\Sigma$ to be the zero slice,
and ${}_{m+1}\Sigma$ to be a slice of shape $(0)$ ahead of ${}_{m}\Sigma$,
and connected to it in the path diagram.  
For all $j = 0, 1, \ldots, m$, temporarily draw the mesh of paths 
connecting ${}_{j}\Sigma$ to ${}_{j+1}\Sigma$,
and declare the path between ${}_{j}\Sigma$ and ${}_{j+1}\Sigma$ the upper envelope of it.
This is depicted in Figure \ref{figRR_diag_UE}.
The successive non-empty nodes are indicated by asterisks.
The mesh of paths is the union of the gray and the blue edges.
The upper envelope is the blue path.
\begin{figure}
  \centering
  \includegraphics[scale=0.1]{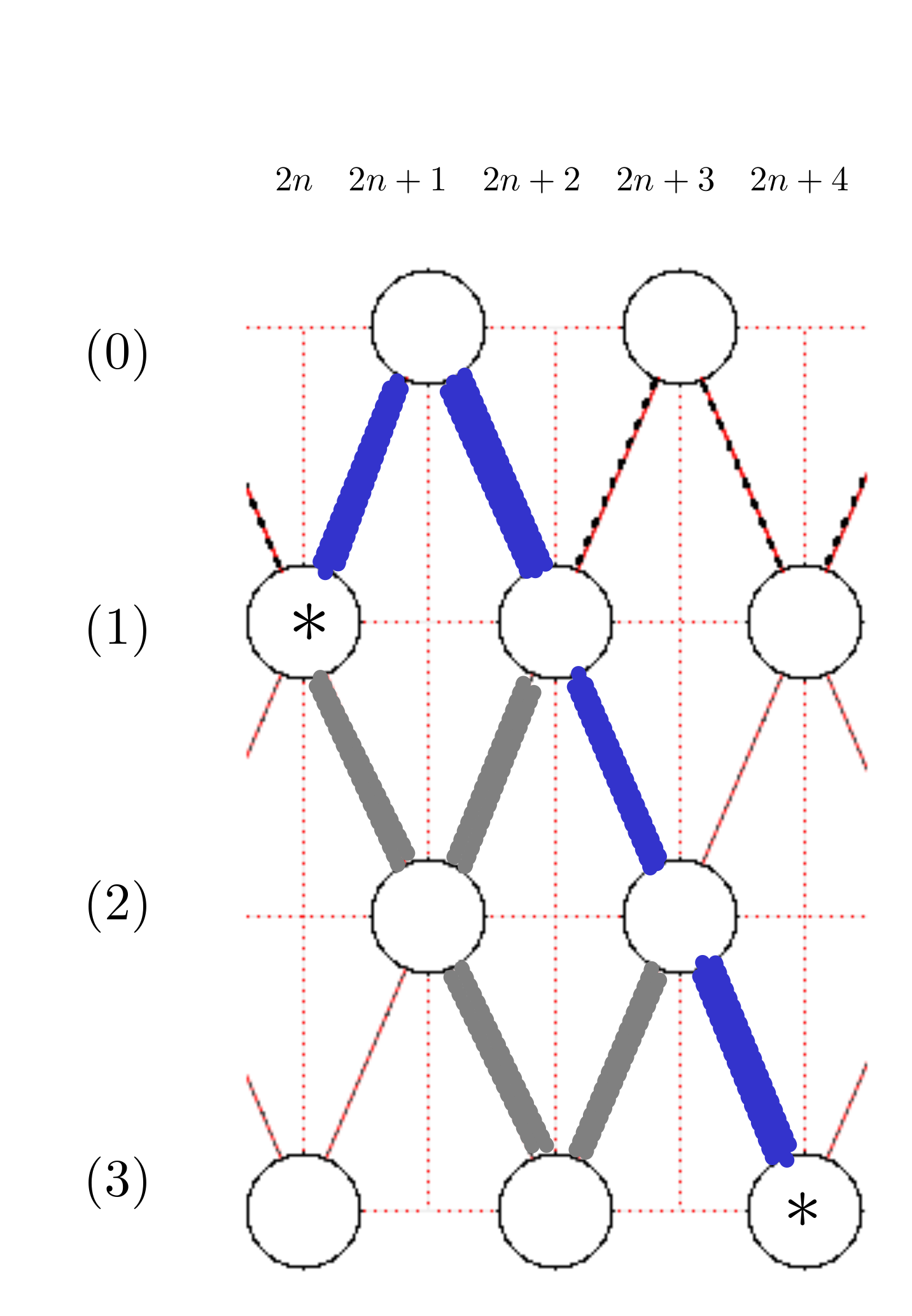}
  \caption{The upper envelope of a mesh of paths between two nodes.  }
\label{figRR_diag_UE}
\end{figure}
To the right of ${}_{m+1}\Sigma$,
draw the path oscillating between shapes $(0)$ and $(1)$.  
This path is uniquely determined by $\Lambda$, 
but not vice versa.  
Different cylindric partitions may produce the same path.  
Any such path will have a unique node of weight $n$ for each $n$.  

For example, the cylindric partition with profile $c=(2,1)$
\begin{align}
\nonumber 
  \Lambda = \begin{array}{ccccccccc}
      & & & 15 & 15 & 10 & 10 & 6 & 5 \\ 
      & & 18 & 13 & 6 & 6 & & & \\
      \textcolor{lightgray}{15} & \textcolor{lightgray}{15} 
        & \textcolor{lightgray}{10} & \textcolor{lightgray}{10} 
        & \textcolor{lightgray}{6} & \textcolor{lightgray}{5} & & & 
    \end{array}
\end{align}
has the slice decomposition
\begin{align}
\nonumber
	3 \times \vcenter{ \hbox{ \resizebox{0.4\width}{0.4\height}{
		\begin{ytableau}
			*(darkgray) & *(darkgray) & *(darkgray) \\
			*(darkgray) & *(darkgray) &
		\end{ytableau}
		} } }
	+ 2 \times \vcenter{ \hbox { \resizebox{0.4\width}{0.4\height}{
		\ydiagram{1+2, 0+1} } } }
	+ 3 \times \vcenter{ \hbox { \resizebox{0.4\width}{0.4\height}{
		\ydiagram{1+2, 0+2} } } }
	+ 4 \times \vcenter{ \hbox { \resizebox{0.4\width}{0.4\height}{
		\ydiagram{1+4, 0+2} } } }
	+ \vcenter{ \hbox { \resizebox{0.4\width}{0.4\height}{
		\ydiagram{1+5, 0+4} } } }
	+ 5 \times \vcenter{ \hbox { \resizebox{0.4\width}{0.4\height}{
		\ydiagram{1+6, 0+4} } } }.
\end{align}

It will look like Figure \ref{figRR_diag1} in the path diagram,
along with the described path.  
The slices ${}_{0}\Sigma$ and ${}_{m+1}\Sigma = {}_{7}\Sigma$
are indicated by $\times$'s.
\begin{figure}
  \centering
  \includegraphics[scale=0.1]{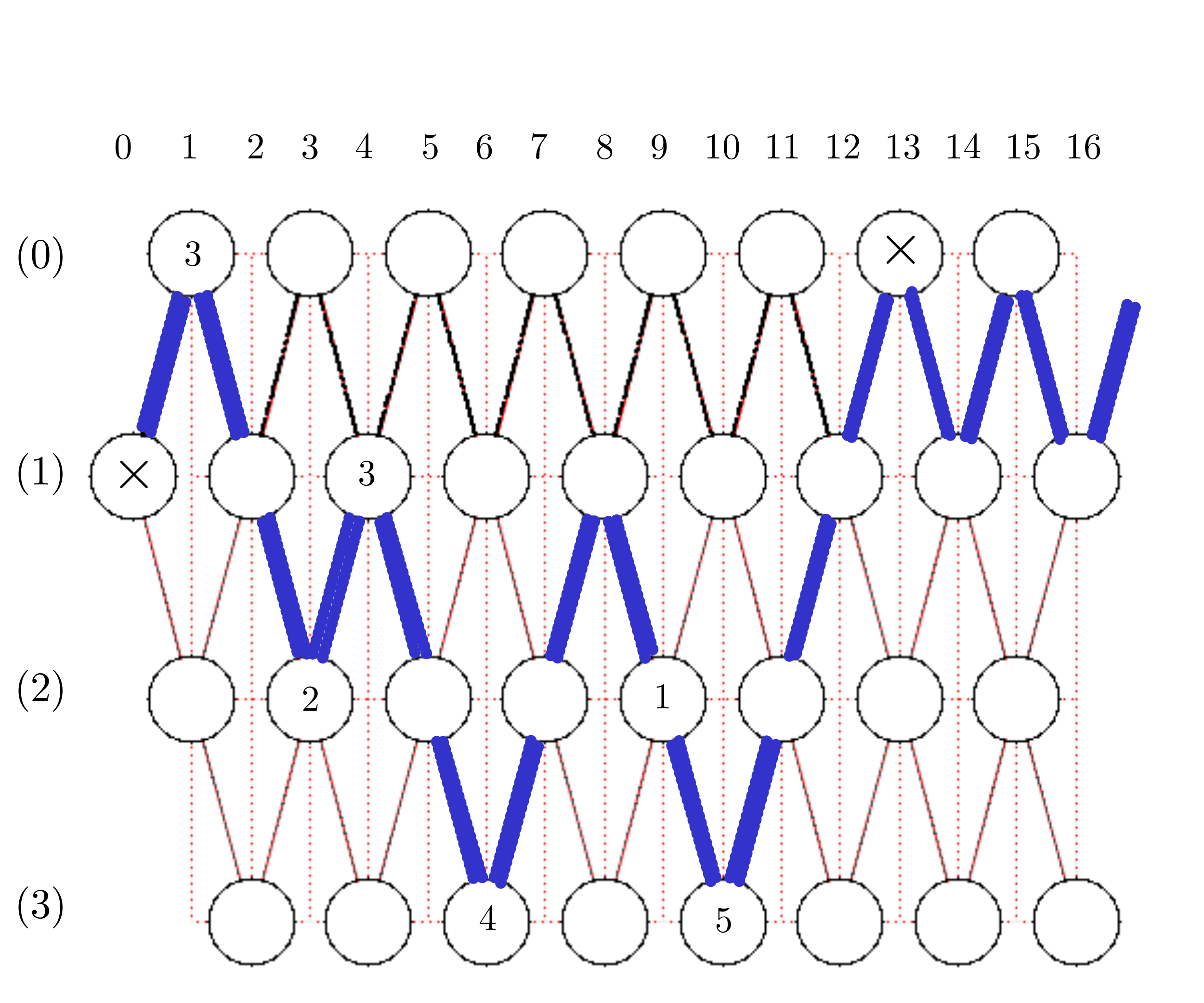}
  \caption{The path determined by a cylindric partition.  }
\label{figRR_diag1}
\end{figure}

When the path passes through a slice of shape $(2)$, 
that slice does not have to appear in $\Lambda$.  
However, if the slice corresponds to a downward spike (a local minimum, or a stalactite) 
in the drawn path, then it has to appear in $\Lambda$.  
This is because upper envelope of paths connecting two nodes 
cannot contain downward spikes with shapes $(2)$ or $(3)$.  
Slices of shape $(3)$ in the drawn path are necessarily downward spikes, 
therefore they must appear in $\Lambda$.
Please compare the red and the blue path in Figure \ref{figRR_diag_spikes},
where the asterisks indicate the non-empty nodes.
\begin{figure}
  \centering
  \includegraphics[scale=0.1]{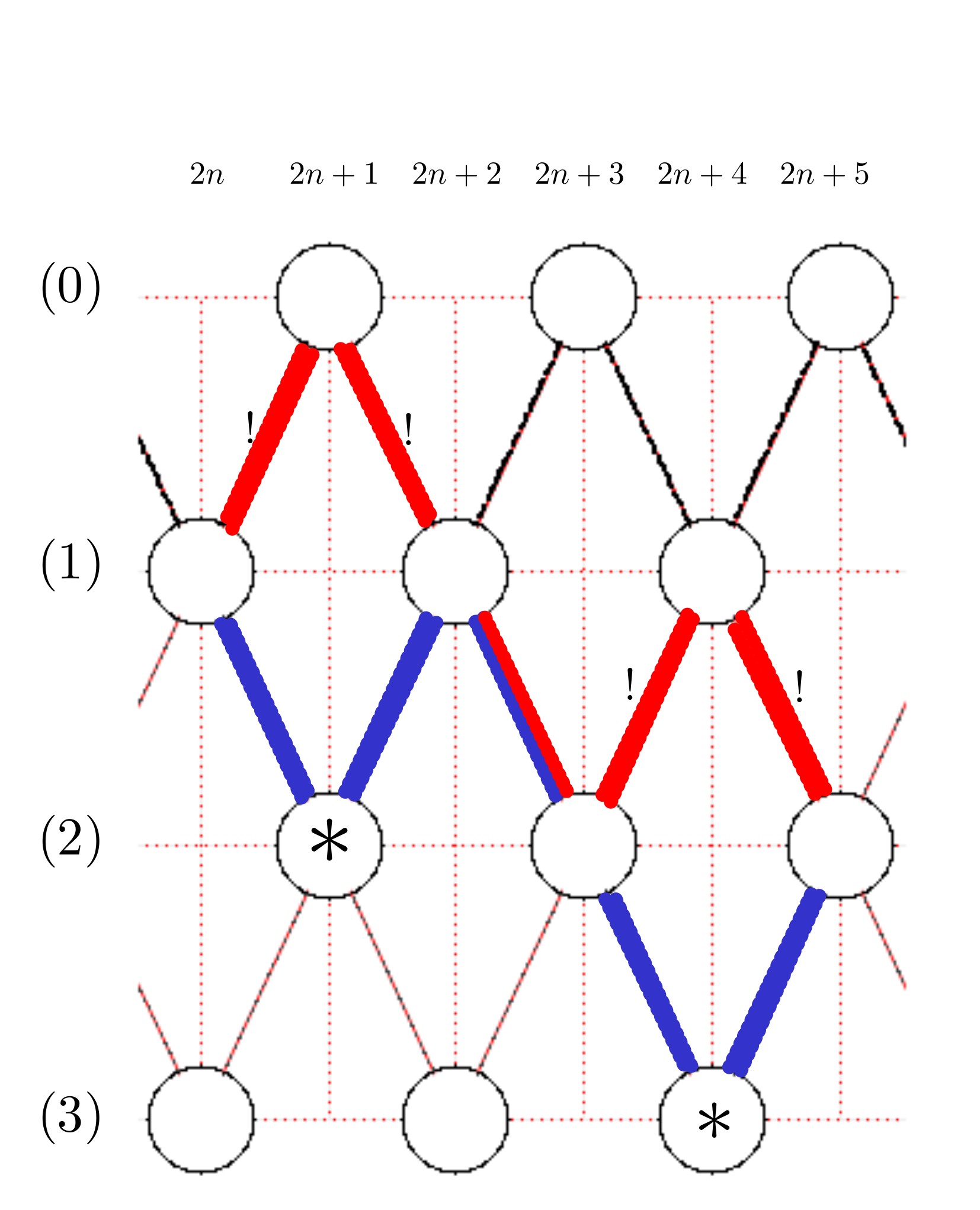}
  \caption{Downward spikes or stalactites.  }
\label{figRR_diag_spikes}
\end{figure}

In both of the only two possibilities, 
if the slice of shape $(2)$ or $(3)$ is absent in $\Lambda$, 
then the upper envelope connecting the slices before and after 
cannot pass through the nodes of shape $(2)$ or $(3)$.  
Also, 
if the slice with shape $(3)$ appears in $\Lambda$, 
the path necessarily passes through the neighboring slices of shape $(2)$, 
regardless of whether they appear in $\Lambda$.  
In summary, downward spikes of shape $(2)$ or $(3)$ 
correspond to non-empty nodes in the drawn path, 
i.e. $\Lambda$ possesses those slices.

The slices of shape $(1)$ are not included in this discussion, 
because they may make downward spikes 
even if $\Lambda$ does not have the corresponding slice.  
This is most clearly seen in the part of the drawn path after ${}_{m+1}\Sigma$.

We next argue that the indicated downward spikes of shapes $(2)$ or $(3)$ 
completely determine the infinite path drawn above; 
in the sense that if we forgot about other slices in $\Lambda$, 
and drew the path as described above, we would have drawn the same path.  

For a contradiction, assume otherwise.  
That is, we are given a cylindric partition $\Lambda$ with profile $(2, 1)$, 
and drew the corresponding infinite path in the path diagram.  
Then, we erased everything in $\Lambda$ but one copy of each of the slices 
that are the downward spikes of shapes $(2)$ or $(3)$.  
Finally, we redrew the path corresponding to this reduced $\Lambda$ 
to end up with a different path.  

In this situation, there is a slice with the smallest weight 
up to which both paths agree, and after which the two paths differ.  
This slice must have shape $(1)$ or $(2)$, 
as slices with shapes $(0)$ or $(3)$ have only one outgoing edge.  
In the former possibility, one of the paths misses a downward spike 
of shape $(2)$ or $(3)$ registered by the other.  
This is impossible either because we did not delete all shapes 
corresponding to such nodes, 
or because the path cannot go through such slices unless $\Lambda$ has them.  
Please compare the blue, red and the gray paths
in the picture on the left in Figure \ref{figRR_diag_after}.
\begin{figure}
  \centering
  \includegraphics[scale=0.1]{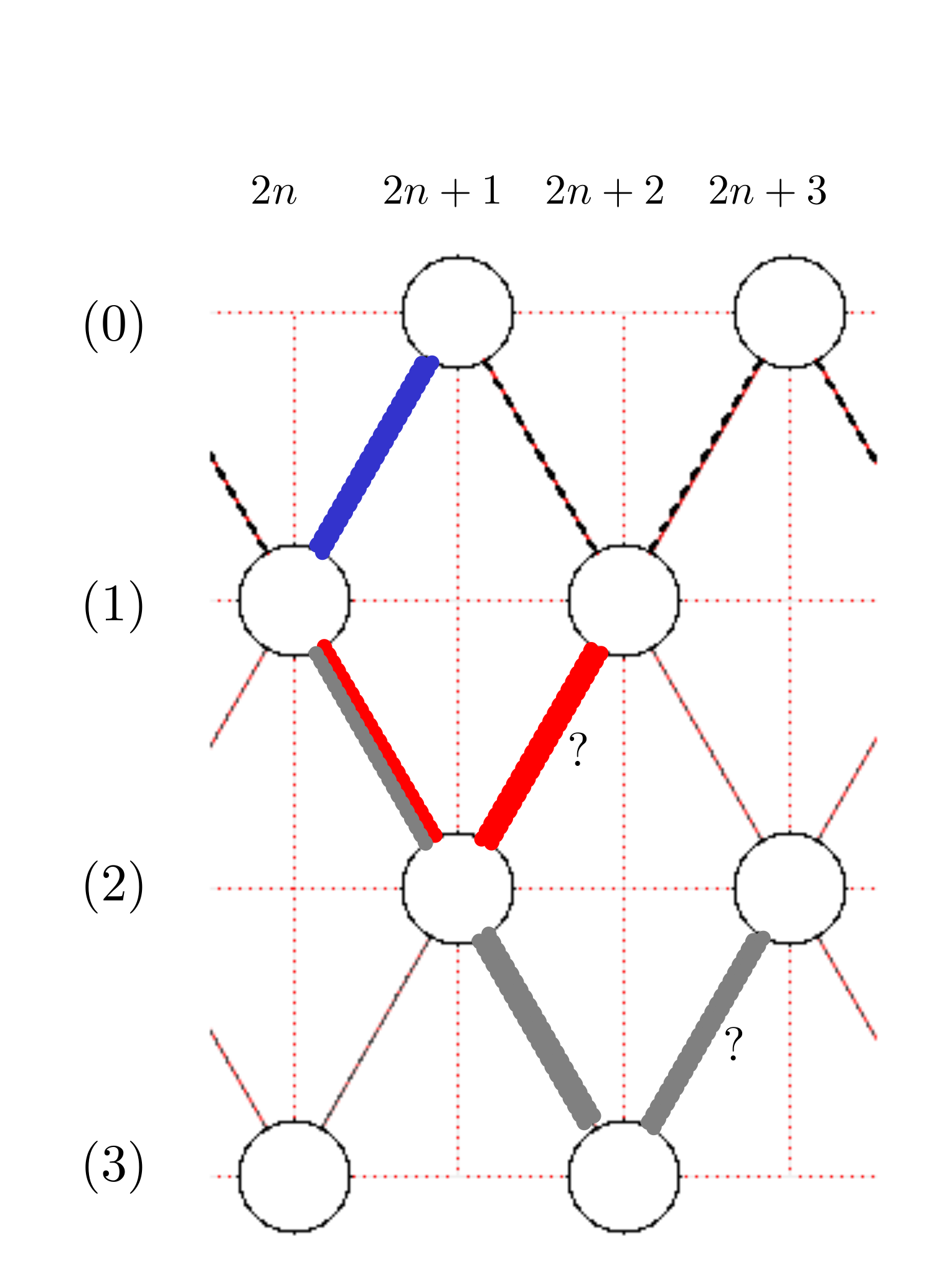}
  \hspace{10mm}
  \includegraphics[scale=0.1]{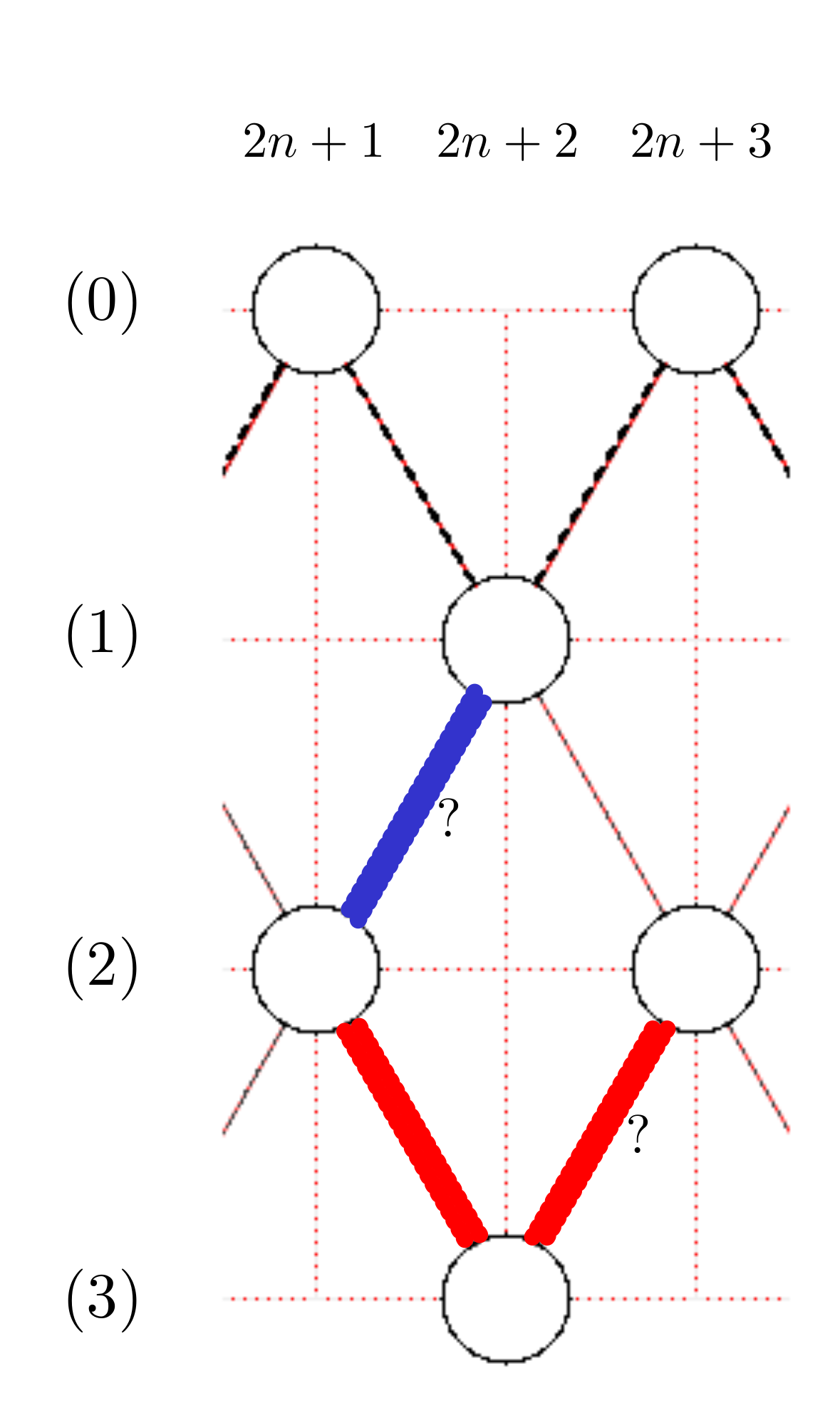}
  \caption{Diverging paths after slices of shapes $(1)$ or $(2)$.  }
\label{figRR_diag_after}
\end{figure}
In the latter possibility, we lose a downward spike of shape $(3)$, 
or create a downward spike of shape $(3)$.  
This is likewise impossible,
please compare the blue and the red paths
in the picture on the right in Figure \ref{figRR_diag_after}.

Therefore, each cylindric partition with profile $(2, 1)$ 
corresponds to an infinite path on the path diagram 
which is completely determined by its downward spikes of shape $(2)$ or $(3)$.  
Moreover, the slices corresponding to these slices appear in $\Lambda$.  
We call these slices \emph{pivots}.  
We denote pivot slices by $\Pi$'s rather than $\Sigma$'s.
A more general, but still consistent, definition of a pivot 
will be developed in the proof of Theorem \ref{thmCylPtnVsPtnPairsFullCase}
and given afterwards.

Next, please observe that two pivots cannot have consecutive weights 
(in the profile $(2, 1)$).  
This is because each pivot must be preceded by a downward edge, 
and followed by an upward edge.  

Take one slice each from each of the pivot slices or pivot nodes.  
Their weights will constitute $\beta$.  
The weights of the remaining slices will constitute $\mu$.  
By construction, $\beta$ has distinct and non-consecutive parts, 
and $\mu$ is an unrestricted partition.  
Also, $\beta$ may have parts as small as 1, 
because the infinite path drawn as in the beginning of the proof 
may have a pivot slice having weight as small as 1.  
Continuing with the above example, we find:
\begin{align}
\nonumber
	& \beta = 10, 6, 3, \\
\nonumber
	& \mu = 10, 10, 10, 10, 9, 6, 6, 6, 4, 4, 4, 3, 3, 1, 1, 1.
\end{align}

Conversely, given a pair $(\mu, \beta)$ in which 
$\mu$ is an unrestricted partition, 
and $\beta$ is a partition into distinct and non-consecutive parts, 
we will construct a unique cylindric partition $\Lambda$ with profile $(2, 1)$.  
First, draw the path diagram of profile $(2, 1)$.  
Then, tally parts of $\beta$ in nodes of shapes $(2)$ or $(3)$ 
corresponding to their size.  
Even parts will necessarily have shape $(3)$, 
and odd parts will have shape $(2)$.  

Next, draw the path which is the upper envelope of the mesh of paths 
between the zero and the smallest non-empty slice, 
that between the smallest and the second smallest 
slice, \ldots; that between the largest non-empty node 
and the node with shape $(0)$
having the smallest weight greater than the largest part of $\beta$, 
and connected to the slice with the greatest weight.  
Pad this path after this last slice with shape $(0)$
with oscillation between shapes $(0)$ and $(1)$ 
to make it an infinite path.  
This path can have no other pivots than the already registered non-empty nodes 
that emanated from the parts of $\beta$ by construction.  

After this, we add the count of parts of $\mu$ 
in the nodes with respective weights.  
This is uniquely possible if we remember that the constructed infinite path 
has exactly one node of each weight $n \in \mathrm{N}$.  

To finish the proof, 
we notice that both cylindric partitions $\Lambda$ with profile $(2, 1)$ 
and the pairs $(\mu, \beta)$ in which $\mu$ is an unrestricted partition 
and $\beta$ is a partition into distinct and non-consecutive parts 
are shown to correspond to unique admissible infinite paths in the path diagram 
together with number of slices in the path.  
By an admissible path, we mean that only finitely many nodes 
in the path diagram have positive slice counts in them, 
non-empty nodes are connected via upper envelopes, 
consequently pivot nodes are non-empty,
and the paths are completely determined by the pivot positions.  
Then, the transformations are seen to be one-to-one.  
They are also onto, 
and we made the constructions so that they are inverses of each other.

\end{proof}

We note that we can write the generating function of cylindric partitions of profile $c=(2,1)$ by using the formula \eqref{BorodinProd} and it is given by
\begin{align}
\nonumber
	F_{(2,1)}(1,q)=\frac{1}{(q;q)_\infty} \times \frac{1}{(q;q^5)_\infty(q^4;q^5)_\infty}.
\end{align}

If we compare Proposition \ref{c=(2,1)RR1} and the above identity,
we see one of the famous Rogers-Ramanujan identities \cite{TheBlueBook}, namely
\begin{align}
\nonumber
	\sum_{n\geq0}\frac{q^{n^2}}{(q;q)_n}= \frac{1}{(q;q^5)_\infty(q^4;q^5)_\infty}.
\end{align}

Some remarks are in order.  
The above proof has ideas that will be elaborated on in the discussion below.
If $\beta = \varepsilon$, the empty partition, 
then the resulting infinite path will be called the default path.  
No node in the default path can be a pivot.
This is shown as the black path in Figure \ref{figRR_remark_r}.
Each pivot requires a deviation from the default path.  
The path branches to the pivot ``as late as possible'', 
and returns back to the default path ``as quickly as possible'', 
as shown as the red path in Figure \ref{figRR_remark_r},
the non-empty node corresponding to a pivot slice denoted by a red asterisk.
This observation will carry over to cylindric partitions with greater rank.  
\begin{figure}
  \centering
 \includegraphics[scale=0.1]{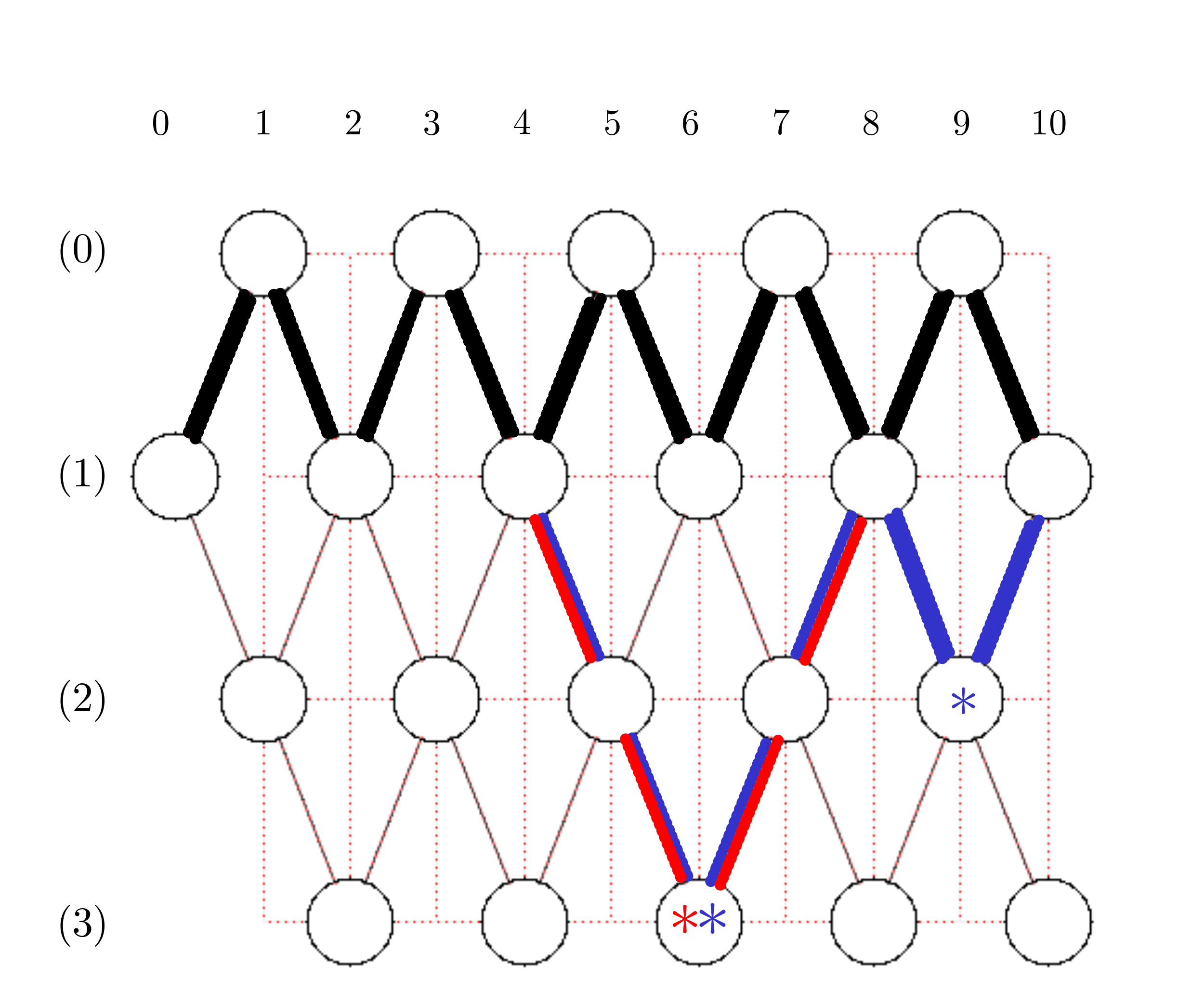}
  \caption{ Leaving and coming back to the default path.  }
\label{figRR_remark_r}
\end{figure}
Shape $(a)$ is declared smaller than shape $(b)$ if $a < b$.  
Between two successive pivots, 
the upper envelope of the mesh of paths connecting them 
amounts to picking the edge towards a smaller shape, or a ``left turn'', whenever possible.
This is shown as the blue path in Figure \ref{figRR_remark_r},
the non-empty nodes corresponding to successive pivot slices indicated by blue asterisks.
For example, in going from the $(1)$ shaped $4$ to the $(3)$ shaped 6, 
one has to go through the $(2)$ shaped 5.  
So, the $(2)$ shaped 5 gives no extra information about the path.  
In this sense, the implicit default path together with the pivots 
are necessary and sufficient data to navigate along a unique path 
in the path diagram.  
The said data is encoded in $\beta$.  
This observation, too, will carry over to cylindric partitions with greater rank, 
but there will be substantial refinements in argumentation.  

We could get away with declaring $\beta$ simply as a Rogers-Ramanujan partition
in cylindric partitions with profiles $(2,1)$ or $(3, 0)$,
because the pivots may be labeled $(2)$ or $(3)$,
which is dictated by the parity of the weight.
If we want the same construction to work for rank $r = 2$ and level $\ell \geq 4$,
we must attach labels or colors to parts in $\beta$,
please see Figure \ref{figRank2_gen_pivots_2}.
\begin{figure}
  \centering
  \includegraphics[scale=0.1]{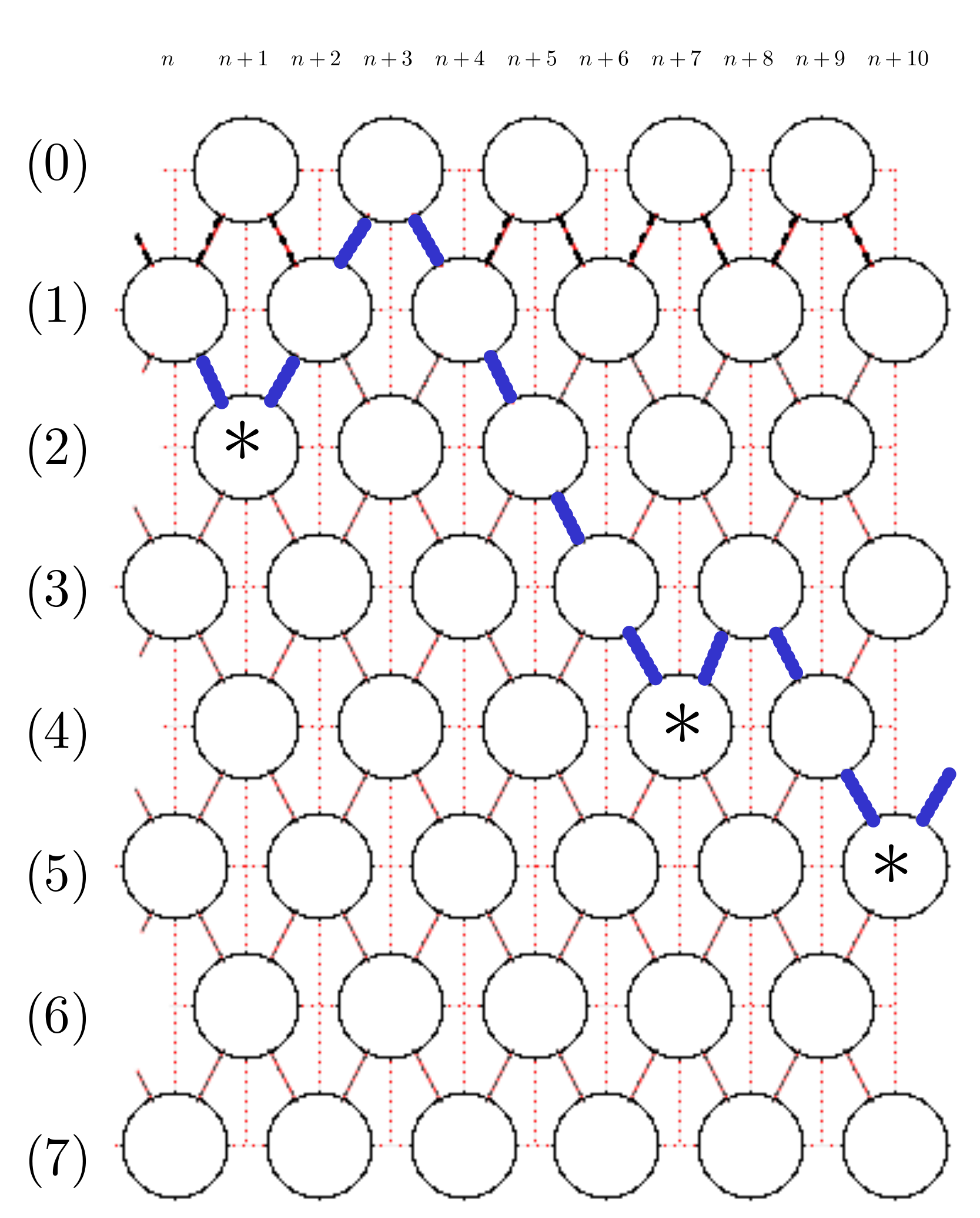}
  \caption{ Three successive pivots in profile $(a, b)$, where $a+b=7$.  }
\label{figRank2_gen_pivots_2}
\end{figure}
In the rank $r = 2$ and level $\ell \geq 4$ case,
pivots may have shape $(2)$ through $(\ell)$.
Depending on the parity of the weight,
the label is either from the set $\{(2), (4), \ldots\}$, or $\{ (3), (5), \ldots \}$.
When we construct the path corresponding to a cylindric partition $\Lambda$
in the path diagram as in the proof of Proposition \ref{c=(2,1)RR1},
the pivots will still be the downward spikes (stalactites)
with label $(2)$ or greater.
It is not difficult to see that if ${}_{j+1}\Pi < {}_{j}\Pi$ are two successive pivots
with weights $w_{j+1}$ and $w_{j}$, and labels $(s_{j+1})$ and $(s_{j})$, respectively,
then
\begin{align}
\nonumber
	w_{j} - w_{j+1} \geq \vert s_{j} - s_{j+1} \vert + 2.
\end{align}
For example, pivots with the same shape may have weights differing by two,
but pivots with shapes $(2)$ and $(4)$ must have weights differing by at least $\vert 2 - 4 \vert + 2 = 4$.
Also, depending on the shape of zero, let's say $(z)$,
not all weight-shape combinations can be pivots.
Some weight-shape combinations may not even be slices.
\begin{figure}
  \centering
  \includegraphics[scale=0.1]{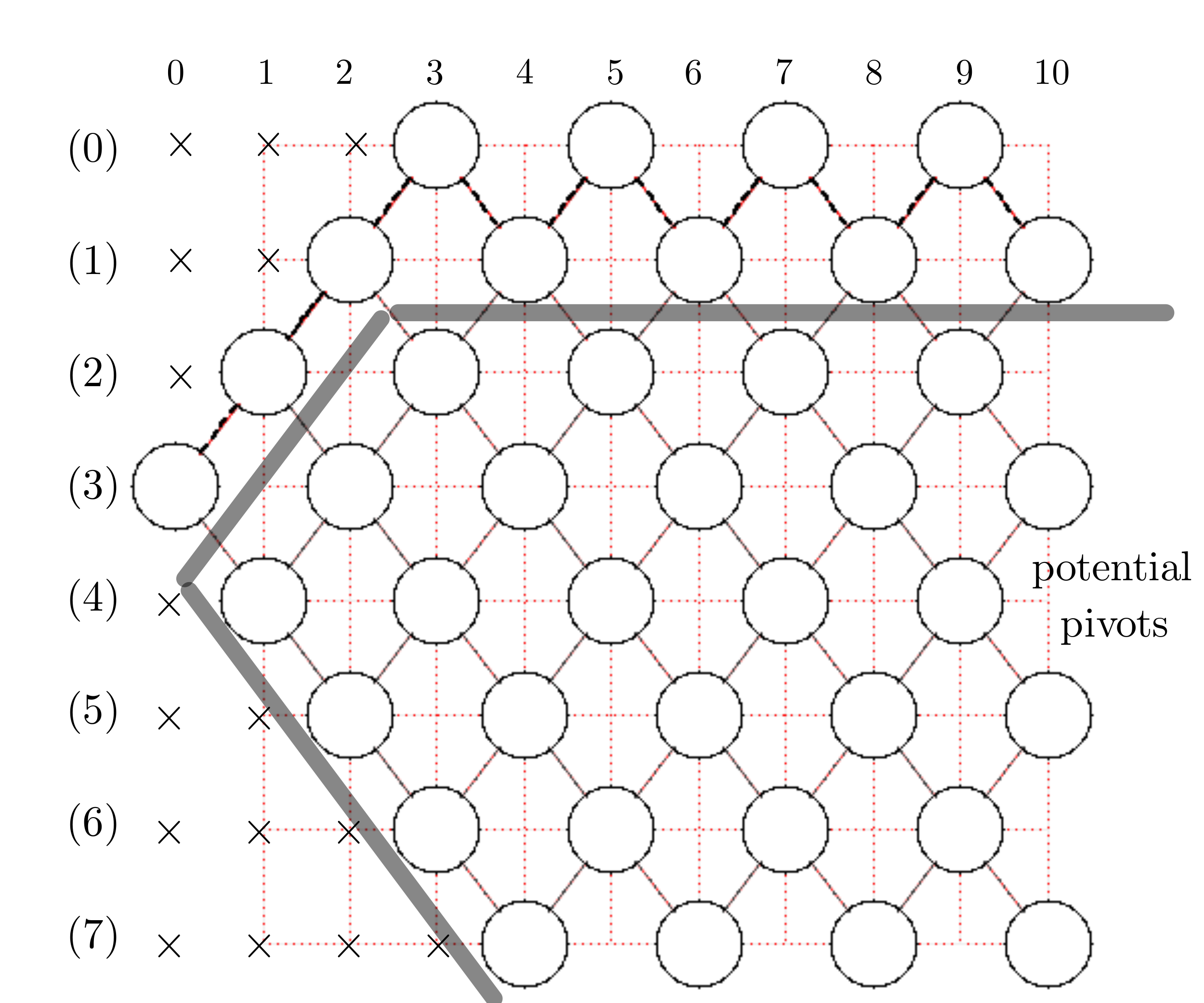}
  \caption{ Impossible slices and potential pivots.  }
\label{figRank2_gen_start}
\end{figure}
In order for a slice with weight $w$ and shape $(s)$, $s \geq 2$, to be a pivot,
both of the inequalities
\begin{align}
\nonumber
	w \geq s - z, \quad \textrm{ and } \quad
	w > z - s
\end{align}
must be satisfied.
Please see Figure \ref{figRank2_gen_start}.
The preceding discussion proves the rank $r = 2$ and arbitrary level $\ell$ case.

\begin{prop}
\label{propRankTwoLevelAnyUnrestricted}
	Let $a$ and $b$ be non-negative integers such that $a + b > 0$.
	Cylindric partitions with profile $(a, b)$ are in one-to-one correspondence
	with pairs $(\mu, \beta)$ in which $\mu$ is an unrestricted partition,
	and $\beta$ is a labeled Rogers-Ramanujan partition satisfying the following conditions.
	\begin{enumerate}[(i)]
	 \item Each part $w$ in $\beta$ is labeled by one of $(2)$, $(3)$, \ldots, $(a+b)$.
	 \item If $w$ is labeled $(s)$, then $w + s \equiv b \pmod{2}$.
	 \item If parts $w_1 \neq w_2$ of $\beta$ are labeled $(s_1)$ and $(s_2)$, respectively,
		then $\vert w_1 - w_2 \vert \geq \vert s_1 - s_2 \vert + 2$.
	 \item A part $w$ labeled $(s)$ in $\beta$ must satisfy both $w \geq s - b$ and $w > b-s$.
	\end{enumerate}
\end{prop}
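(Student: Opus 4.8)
The plan is to carry the argument of Proposition~\ref{c=(2,1)RR1} over almost verbatim, feeding it the preparatory discussion on general rank~$2$ pivots that immediately precedes the statement. The one new structural input is that, by the formula for the shape of zero, a profile $c = (a, b)$ corresponds to the one-part shape $(b)$, so in the notation of the paragraph before the proposition we have $z = b$; this is what makes $b$ appear in conditions (ii) and (iv). Two elementary facts about the rank~$2$ path diagram then do most of the work. First, every edge increases the weight by exactly $1$ and changes the shape by exactly $\pm 1$, so $w + s \bmod 2$ is constant along the connected path diagram; since the shape of zero has $(w, s) = (0, b)$, every slice of weight $w$ and shape $(s)$ satisfies $w + s \equiv b \pmod 2$, which is (ii). Second, computing $\Delta$ from \eqref{defDeltaOp} with the composition for $(b)$ and that for $(s)$ gives minimal weight $|s - b|$, and appending columns of height $r = 2$ shows that the slices of shape $(s)$ are exactly those of weight $w \ge |s-b|$ with $w \equiv s + b \pmod 2$; requiring in addition that such a slice be a genuine downward spike (so that the nodes of shape $(s-1)$ immediately before and after it both exist) is, after using (ii), precisely condition (iv), and it forces $2 \le s \le \ell = a + b$, which is (i).

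For the forward map I would proceed exactly as in the proof of Proposition~\ref{c=(2,1)RR1}. Given $\Lambda$ with profile $(a,b)$, decompose it into its slices as in \eqref{PO_slices}, tally them in the path diagram, and draw the canonical infinite path by joining consecutive non-empty slices with the upper envelope of the mesh between them and then oscillating along the default path beyond the largest slice; the \emph{pivots} are the downward spikes of shape~$\ge 2$. As before, pivots must occur in $\Lambda$ and the whole path is recovered from the pivots alone. Letting $\beta$ record one copy of each pivot weight, labeled by the corresponding shape, and letting $\mu$ record the remaining slice weights, we get $\mu$ unrestricted and $\beta$ satisfying (i), (ii), (iv) by the preceding paragraph; the gap condition (iii) for \emph{successive} pivots is the displayed inequality $w_j - w_{j+1} \ge |s_j - s_{j+1}| + 2$, and for an arbitrary pair of parts of $\beta$ it follows by summing this over the intervening pivots and using the triangle inequality $|s_i - s_k| \le |s_i - s_j| + |s_j - s_k|$. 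Hence $\beta$ is a labeled Rogers--Ramanujan partition of the required type.

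For the reverse map, given $(\mu, \beta)$ satisfying (i)--(iv), I would place each part $w$ of $\beta$ with label $(s)$ at the node of weight $w$ and shape $(s)$ --- this node exists and can be a pivot exactly because of (ii) and (iv) --- then draw the upper envelope between consecutive marked nodes (and between the zero slice and the smallest marked node, and from the largest marked node along the default path), pad with the default oscillation to obtain an infinite path, and finally add the parts of $\mu$ as extra multiplicities at the nodes of the corresponding weights, which is unambiguous because the path has exactly one node of each weight. As in Proposition~\ref{c=(2,1)RR1}, both cylindric partitions of profile $(a,b)$ and pairs $(\mu,\beta)$ of the stated type correspond to admissible infinite paths decorated with finitely many slice multiplicities, and the two assignments are manifestly inverse to one another, which gives the claimed one-to-one correspondence.

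The main obstacle is the same delicate point that arises already in rank~$2$, level~$3$, but now with the labels carrying quantitative weight: one must verify that condition (iii) is \emph{exactly} the right gap, in the sense that (a) any pivot configuration coming from a genuine $\Lambda$ satisfies it, and (b) conversely, when the upper-envelope path is built through the marked nodes of a $\beta$ satisfying (iii), it creates \emph{no} spurious downward spike of shape $\ge 2$ between consecutive marks, so that the marked nodes really are the only pivots of the reconstructed path. Establishing this rigorously amounts to turning the ``branch as late as possible, return as quickly as possible'' description of the upper envelope into the inequality $w_j - w_{j+1} \ge |s_j - s_{j+1}| + 2$ and checking its sharpness; once that is done, the remaining bookkeeping --- the stretch of the path below the first pivot and above the last one, the special status of the shape of zero, and the degenerate cases $a = 0$, $b = 0$, or $\beta = \varepsilon$ --- is routine.
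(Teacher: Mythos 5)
Your proposal is correct and takes essentially the same route as the paper, whose proof of this proposition is precisely the discussion preceding it (shape of zero $(b)$ so $z=b$, invariance of weight-plus-shape parity giving (ii), pivots as downward spikes of shape at least $(2)$, the successive-pivot gap inequality giving (iii), and the existence constraints $w\geq s-b$, $w>b-s$ giving (iv)) grafted onto the path-diagram bijection of Proposition~\ref{c=(2,1)RR1}, exactly as you describe. Your additional observation that (iii) for non-consecutive parts follows from the consecutive-pivot inequality by summation and the triangle inequality is a detail the paper leaves implicit.
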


The reader can verify that Proposition reduces to Theorems 3 and 6 in~\cite{KO}
when $a + b = 2$, and to Proposition \ref{c=(2,1)RR1} when $a + b = 3$ with significant simplifications.

It is possible to construct similar correspondences 
for cylindric partitions with rank $r \geq 3$.
However, we cannot give a corollary-like generalization of the above propositions.
The arguments must be improved using the structure of the path diagram 
coming from restricted partitions as shapes.
As indicated before, rank $r = 2$ case is presented separately,
because the pictures gave a very clear idea of the proofs.
We will enhance those ideas in another way.
For rank $r \geq 3$, as we will see below, $\beta$ will still have distinct parts,
but the parts may be consecutive sometimes.

The two main obstacles for a straight generalization can be seen 
in Figure \ref{figPathDiag}.
There are crossing edges, and there seem to be no natural way around it.
Therefore, the ``upper envelope'' shortcut needs more careful explanation.
In fact, it is impossible to do it by relying on graphs alone,
as easy counterexamples may be constructed.
Secondly; although it is fairly straightforward to describe the default path,
which is faintly indicated in Figure \ref{figPathDiag},
and argue that any node outside of the default path may be a pivot;
it is almost impossible to decide which nodes are pivot nodes on any given path in the path diagram.
To resolve both issues,
we must examine the structure of the path diagram resulting from adding boxes
to outer corners of slices more closely.
The flip side is that this analysis is an overkill in the rank $r = 2$ case,
in which everything is clearer on the path diagram.
We now prove the fully general case.

\begin{proof}[Proof of Theorem \ref{thmCylPtnVsPtnPairsFullCase}]
  Choose and fix a cylindric partition $\Lambda$ with profile $c$.  
  Then, draw the distinct shapes of slices of $\Lambda$, 
  discarding their multiplicities for a moment.  
  As indicated above, the cylindric partition with profile $c = (1,1,1)$
  $\Lambda = ( (5, 4), (8, 2), (7, 5, 1) )$ has the following distinct slices.

	\begin{align}
	\nonumber
	\vcenter{ \hbox { \resizebox{0.4\width}{0.4\height}{
		\ydiagram{2+2, 1+2, 0+3} } } }
	>
	\vcenter{ \hbox { \resizebox{0.4\width}{0.4\height}{
		\ydiagram{2+2, 1+2, 0+2} } } }
	>
	\vcenter{ \hbox { \resizebox{0.4\width}{0.4\height}{
		\ydiagram{2+2, 1+1, 0+2} } } }
	>
	\vcenter{ \hbox { \resizebox{0.4\width}{0.4\height}{
		\ydiagram{2+1, 1+1, 0+2} } } }
	>
	\vcenter{ \hbox{ \resizebox{0.4\width}{0.4\height}{
		\ydiagram[*(white)]{ 3+0, 2+1, 1+1 }
		*[*(darkgray)]{3,3,2}
		} } }
		>
	\vcenter{ \hbox{ \resizebox{0.4\width}{0.4\height}{
		\ydiagram[*(white)]{ 3+0, 2+1, 1+0 }
		*[*(darkgray)]{3,3,1}
		} } }
		>
	\vcenter{ \hbox{ \resizebox{0.4\width}{0.4\height}{
		\ydiagram[*(white)]{ 3+0, 2+0, 1+0 }
		*[*(darkgray)]{3,2,1}
		} } }
		= E
	\end{align}
  
  For all we know, each shape appears at least once in $\Lambda$.  
  In the path diagram, this corresponds to the nodes 
  with positive multiplicities recorded in them.  
  By the construction of the path diagram, 
  we know that each pair of successive nodes are connected.  
  Please see Figure \ref{figGen_thm_ex_1_path_diag},
  where we indicated each of the non-zero multiplicities with an asterisk.
  \begin{figure}
    \centering
    \includegraphics[scale=0.1]{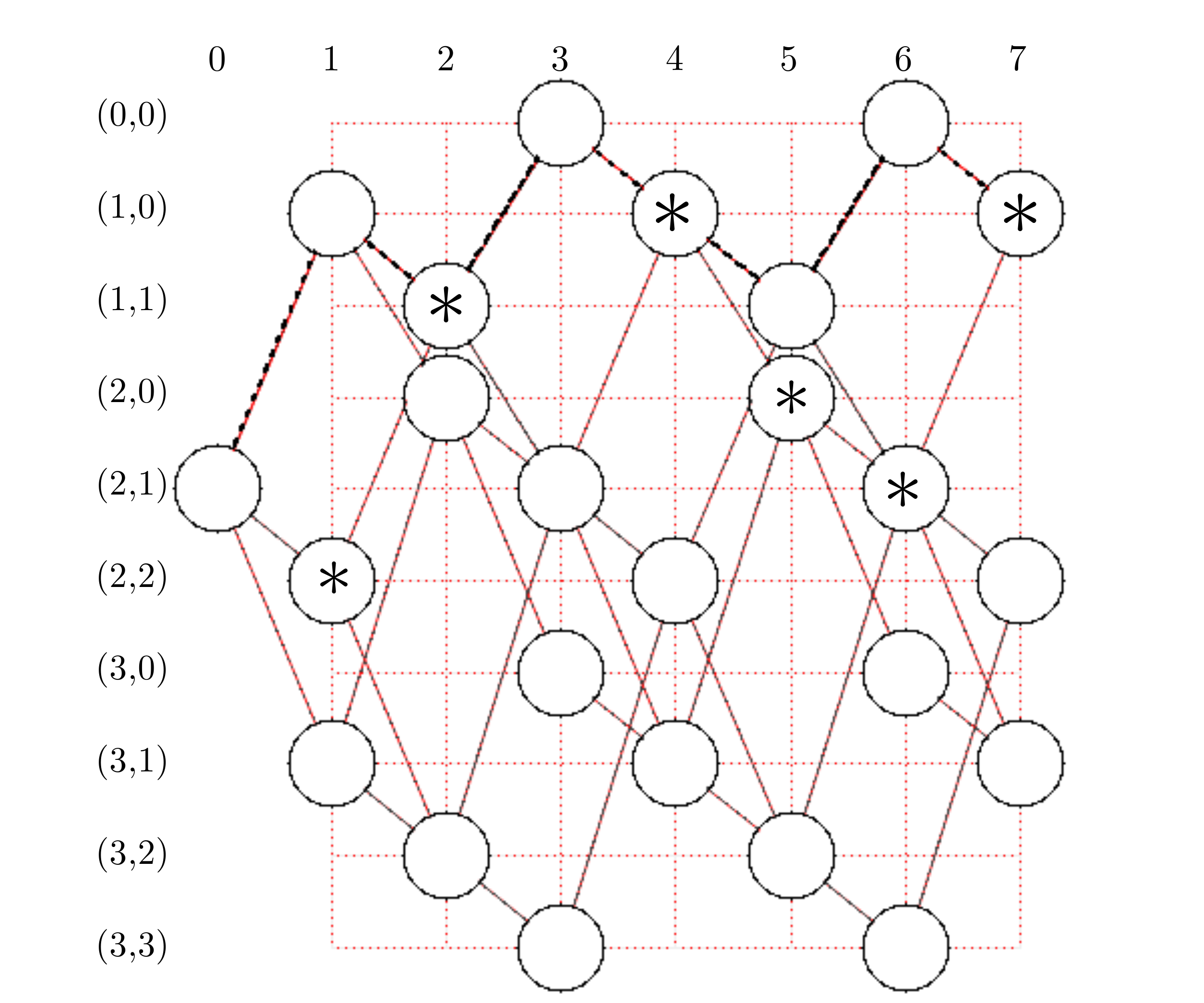}
    \caption{ Nodes with positive multiplicities.  }
  \label{figGen_thm_ex_1_path_diag}
  \end{figure}

  We imagine tiling the space between each successive pairs of slices
  by $1 \times 1$ squares, one at a time, 
  working on the same space until it is completely filled up.  
  We begin with the space between the zero slice and the smallest slice.  
  Then, we continue with the space between the smallest and the next smallest slice, etc.  
  Finally, we fill the infinite strip to the right of the largest slice.  
  \begin{center}
   \includegraphics[scale=0.5]{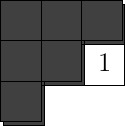}
   \raisebox{8mm}{$\quad \longrightarrow \quad$}
   \includegraphics[scale=0.5]{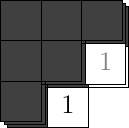}
   \raisebox{8mm}{$\quad \longrightarrow \quad$}
   \includegraphics[scale=0.5]{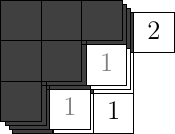}
   \raisebox{8mm}{$\quad \longrightarrow \cdots$}
  \end{center}
  We place the $1 \times 1$ boxes in a space beginning with the leftmost column,
  and filling that column from top to bottom,
  and work on the next leftmost empty column, etc.
  \begin{center}
   \raisebox{8mm}{$\cdots \longrightarrow \quad$}
   \includegraphics[scale=0.5]{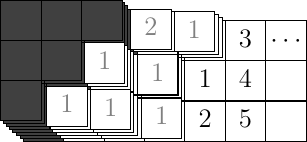}
  \end{center}

  After the placement of each box,
  hence obtaining a slice from the previous one by increasing its weight by one, 
  we record the new intermediate slice along with the edge connecting the previous 
  slice and the new one.  
  Below is the new look of the infinite path drawn on the path diagram.
  This infinite path has one and only one slice of each weight 
  $n = 0, 1, 2, \ldots$ 
  \begin{center}
   \includegraphics[scale=0.5]{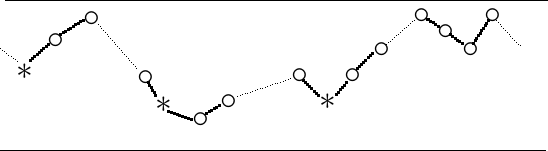}
  \end{center}

  For each slice $\Sigma$ in $\Lambda$,
  consider the last box $s$ we used for filling up the space just before $\Sigma$,
  and the first box $b$ we used for filling up the space just after $\Sigma$.
  If the column of $b$ is to the left of the column of $s$, 
  then we declare $\Sigma$ as a \emph{pivot} slice in $\Lambda$.
  In other words, $\Sigma$ is a pivot slice
  if it causes the filling up operation as described above
  to make a backward step.  
  By construction, no intermediate node we marked can be a pivot, 
  and not all shapes in $\Lambda$ will be pivots.  
  For example, in 
  \begin{center}
   \includegraphics[scale=0.5]{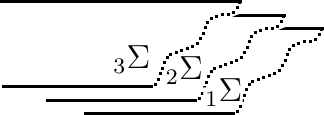}
  \end{center}
  
  if we would have encountered ${}_{2}\Sigma$ already
  in filling the gap between ${}_{1}\Sigma$ and ${}_{3}\Sigma$,
  as if ${}_{2}\Sigma$ was not there,
  ${}_{2}\Sigma$ will not be a pivot.
  After the declaration of pivothood, we denote the pivot slices by $\Pi$.
  
  In our running example,
  $\vcenter{ \hbox{ \resizebox{0.4\width}{0.4\height}{
		\begin{ytableau}
		 *(darkgray) \phantom{0} & *(darkgray) \phantom{0} & *(darkgray) \phantom{0} \\
		 *(darkgray) \phantom{0} & *(darkgray) \phantom{0} & *(white) \textcolor{gray}{\phantom{1}} \\
		 *(darkgray) \phantom{0}
		\end{ytableau}
		} } }$
	is a pivot slice, because to fill up the space for the next slice
	$\vcenter{ \hbox{ \resizebox{0.4\width}{0.4\height}{
		\begin{ytableau}
		 *(darkgray) \phantom{0} & *(darkgray) \phantom{0} & *(darkgray) \phantom{0} \\
		 *(darkgray) \phantom{0} & *(darkgray) \phantom{0} & *(white) s \\
		 *(darkgray) \phantom{0} & *(white) b
		\end{ytableau}
		} } }$,
	we need to place the first box $b$ strictly to the left of the last box $s$ we used for fully tiling the former slice.
	The slice
	$	\vcenter{ \hbox{ \resizebox{0.4\width}{0.4\height}{
	\begin{ytableau}
		*(darkgray) \phantom{0} & *(darkgray) \phantom{0} & *(darkgray) \phantom{0} \\
		*(darkgray) \phantom{0} & *(darkgray) \phantom{0} & *(white) \phantom{0} \\
		*(darkgray) \phantom{0} & *(white) \textcolor{gray}{\phantom{1}}
	\end{ytableau}
	} } }$ is not a pivot slice,
	because to fill up the space in the next slice
	$\vcenter{ \hbox{ \resizebox{0.4\width}{0.4\height}{
	\begin{ytableau}
		*(darkgray) \phantom{0} & *(darkgray) \phantom{0} & *(darkgray) \phantom{0} & *(white) \textcolor{gray}{\phantom{2}} \\
		*(darkgray) \phantom{0} & *(darkgray) \phantom{0} & *(white) s \\
		*(darkgray) \phantom{0} & *(white) \phantom{0} & *(white) b
	\end{ytableau}
	} } }$,
	the first box $b$ we place is not strictly to the left of the last box $s$
	we used for tiling the previous slice.
	The only two pivot slices in this example are
	\begin{align}
	\nonumber
	\vcenter{ \hbox{ \resizebox{0.4\width}{0.4\height}{
		\begin{ytableau}
		 *(darkgray) \phantom{0} & *(darkgray) \phantom{0} & *(darkgray) \phantom{0} & *(white) \phantom{0}
			& *(white) \textcolor{gray}{\phantom{0}} \\
		 *(darkgray) \phantom{0} & *(darkgray) \phantom{0} & *(white) \phantom{0} \\
		 *(darkgray) \phantom{0} & *(white) \phantom{0} & *(white) \phantom{0}
		\end{ytableau}
		} } },
		\quad \textrm{ and } \quad
	\vcenter{ \hbox{ \resizebox{0.4\width}{0.4\height}{
		\begin{ytableau}
		 *(darkgray) \phantom{0} & *(darkgray) \phantom{0} & *(darkgray) \phantom{0} \\
		 *(darkgray) \phantom{0} & *(darkgray) \phantom{0} & *(white) \textcolor{gray}{\phantom{1}} \\
		 *(darkgray) \phantom{0}
		\end{ytableau}
		} } }.
  \end{align}
  They have shapes $(2,0)$ and $(2,2)$, and weights 5 and 1, respectively.

  For clear reasons, the zero slice cannot be a pivot.
  Nor any slice with shape $(0, \ldots, 0)$, $(1, 0, \ldots, 0)$, \ldots 
  $(1, \ldots, 1)$ can be a pivot.  
  By construction, none of the intermediate slices 
  that were not originally in $\Lambda$ but later added to the infinite path 
  in the path diagram are pivots.  
  In short, to determine whether or not a slice is a pivot slice,
  we need to examine the \emph{spaces} before and after it.
  The space before a slice is the gap between the slice and the previous strictly smaller slice,
  which can be the zero slice.
  The space after a slice is the gap between that and the next strictly larger slice,
  which may be the hypothetical slice of shape $(0,\ldots,0)$ and large enough weight.
  
  One copy from each pivot slice will make up $\beta$.  
  $\beta$ necessarily has distinct parts.  
  We can regard $\beta$ as a partition into distinct parts, 
  in which parts are labeled by shapes of respective pivot slices.  
  The labeling will satisfy some conditions implied by the construction.  
  Instead of listing all those conditions, 
  we keep in mind the dual outlook of $\beta$ as 
  a collection of pivot slices.  
  
  The unrestricted partition $\mu$ is formed by 
  the weights of the remaining slices after we take out one copy of each pivot.  
  We discard the information given by the shapes, 
  because that information is already stored in the pivots.  
  The shapes for the weights in $\mu$ can be recovered 
  by stacking the pivot slices and tiling the spaces in between.

  In our example, we find that the pair
  \begin{align}
	\nonumber
		& \beta = 5^{(2,0)}, 1^{(2,2)}, \\
	\nonumber
		& \mu = 7, 6, 5, 4, 2, 2,
  \end{align}
  corresponds to the cylindric partition $\Lambda = ( (5, 4), (8, 2), (7, 5, 1) )$.
  The shapes of the pivot slices recorded in $\beta$ are denoted by the superscripts.
  Recall that the profile is $c=(1,1,1)$.

  For the reverse direction,
  we are given a pair of partitions $(\mu, \beta)$ 
  in which $\mu$ is an unrestricted partition, 
  and $\beta$ is a partition into labeled distinct parts satisfying 
  some further conditions, i.e. the labeling must be admissible.  
  In particular, the labels are shapes in the form of 
  restricted partitions with at most $r-1$ parts, 
  each of which is at most $\ell$.  
  These numbers $\ell$ and $r$ are known.  
  When we consider the size of each part together with its shape in $\beta$, 
  we obtain slices with profile $c$.  
  These slices when stacked together must be successive pivots, 
  i.e. the tiling according to the rules above will cause 
  the first box after a slice to be placed in a column 
  to the left of the last placed box for filling the space just before that slice.  
  
  After marking the pivot slices in the path diagram 
  by tallying one in each of the corresponding nodes, 
  we construct the infinite path passing through these nodes 
  as described above.  
  Namely, we tile each space between the pivots one square at a time, 
  recording the new intermediate slice in the path diagram along with 
  the edge connecting it to the preceding slice.  
  We start with the space before the smallest pivot, 
  then continue with the space between the smallest and the next smallest pivot etc., 
  and finish with the infinite space to the right of the largest pivot.  
  
  By construction, the only pivot slices in this infinite path 
  are the ones coming from $\beta$.  
  We have not introduced any new, nor altered any existing pivots.  
  Again, we have exactly one node of each size $n \in \mathbb{N}$.  
  Next, for each part in $\mu$, 
  we register one slice at the node with weight equal to the part size.  
  This is possible in exactly one way.  
  Finally, we read the slice counts on the path diagram as a cylindric partition 
  $\Lambda$ with profile $c$.  
  
  Please observe that the constructions of $(\mu, \beta)$ from $\Lambda$ 
  and of $\Lambda$ from $(\mu, \beta)$ are inverses of each other.  
  
  The final part of the proof is the length of runs in $\beta$.  
  The determination of pivots depend on placing a box to the left of the previous one.  
  It is implicit that we need to place the new box to another row; 
  in particular, a row that is strictly below the last box.  
  This is due to the fact that the right end of a slice 
  is the shape of the Young diagram of a partition.  
  So, if the last box before a pivot is in the top row, 
  the highest the next box can go is the second row etc. 
  If the last box is placed in the bottom row, 
  then the slice cannot be a pivot.  
  Consequently, there is room for only $r-1$ 
  pivots with consecutive weights.  
  Therefore, the longest run in $\beta$ can have length at most $r-1$.  
  Similarly, the number of columns to the left of a box that has just been tiled
  in which a new outer corner may be placed is bounded by $\ell - 1$.
\end{proof}

We present another example.
This time, we are given
\begin{align}
\nonumber
  & \beta = 15^{(2,1)}, 11^{(3, 2)}, 10^{(3, 1)}, 1^{(2, 2)}, \textrm{ and } \\
\nonumber
  & \mu = 13, 10, 10, 9, 5, 5, 3, 2,
\end{align}
corresponding to a cylindric partition with profile $c=(1,1,1)$.
Thus, the shape of zero is $(2, 1)$.
We first verify that the slices described by $\beta$ are indeed pivots.
For that, we (almost) superimpose successive pairs or triples of the alleged pivots
${}_{1}\Pi$ $>{}_{2}\Pi$ $>{}_{3}\Pi$ $>{}_{4}\Pi$
with respective weights and shapes written above.
We also indicate the last box $s$ used for tiling the space before
the smaller pivot and the first box $b$ used for tiling the space after it.
\begin{center}
 \includegraphics[scale=1.2]{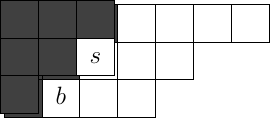} \raisebox{4mm}{, }
 \includegraphics[scale=1.2]{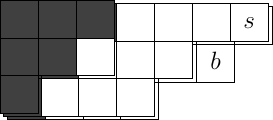} \raisebox{4mm}{, }
 \includegraphics[scale=1.2]{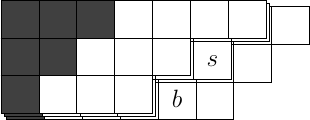} \raisebox{4mm}{, }
 \includegraphics[scale=1.2]{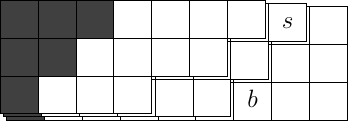} \raisebox{4mm}{. }
\end{center}
To find the intermediate slices between, say, ${}_{4}\Pi$ and ${}_{3}\Pi$,
we tile the space between the said pivots as described
in the proof of Theorem \ref{thmCylPtnVsPtnPairsFullCase},
and record slices after the insertion of each box.
The intermediate slices between ${}_{4}\Pi$ and ${}_{3}\Pi$ are:
\begin{align}
\nonumber
	\vcenter{ \hbox{ \resizebox{0.4\width}{0.4\height}{
		\begin{ytableau}
			*(darkgray) & *(darkgray) & *(darkgray) \\
			*(darkgray) & *(darkgray) & \\
			*(darkgray) &
		\end{ytableau}
		} } },
	\vcenter{ \hbox{ \resizebox{0.4\width}{0.4\height}{
		\begin{ytableau}
			*(darkgray) & *(darkgray) & *(darkgray) \\
			*(darkgray) & *(darkgray) & \\
			*(darkgray) & &
		\end{ytableau}
		} } },
	\vcenter{ \hbox { \resizebox{0.4\width}{0.4\height}{
		\ydiagram{2+1, 1+1, 0+2} } } },
	\vcenter{ \hbox { \resizebox{0.4\width}{0.4\height}{
		\ydiagram{2+1, 1+2, 0+2} } } },
	\vcenter{ \hbox { \resizebox{0.4\width}{0.4\height}{
		\ydiagram{2+1, 1+2, 0+3} } } },
	\vcenter{ \hbox { \resizebox{0.4\width}{0.4\height}{
		\ydiagram{2+2, 1+2, 0+3} } } },
	\vcenter{ \hbox { \resizebox{0.4\width}{0.4\height}{
		\ydiagram{2+2, 1+3, 0+3} } } },
	\vcenter{ \hbox { \resizebox{0.4\width}{0.4\height}{
		\ydiagram{2+3, 1+3, 0+3} } } },
	\vcenter{ \hbox { \resizebox{0.4\width}{0.4\height}{
		\ydiagram{2+4, 1+3, 0+3} } } }.
\end{align}
We call these intermediate slices $\Sigma$'s.
Observe that $\Pi$'s are still pivots after the insertion of these $\Sigma$'s
in the chain of containment,
and none of the $\Sigma$'s are pivots.
We do this for the space between ${}_{3}\Pi$ and ${}_{2}\Pi$,
that between ${}_{2}\Pi$ and ${}_{1}\Pi$,
and lastly for the space after ${}_{1}\Pi$.
The obtained chain of strict containment of slices now has
a unique slice of each positive weight.
At this point, we can incorporate parts of $\mu$.
To find the cylindric partition corresponding to the pair $(\beta, \mu)$,
we simply add the slice with the corresponding weight
for each part in $\mu$.
Counting the pivots as well,
we have the following slices for $\Lambda$.
\begin{align}
\nonumber
  \Lambda
  = \vcenter{ \hbox{ \resizebox{0.4\width}{0.4\height}{
		\begin{ytableau}
			*(darkgray) & *(darkgray) & *(darkgray) \\
			*(darkgray) & *(darkgray) & \\
			*(darkgray)
		\end{ytableau}
		} } }
	+ \vcenter{ \hbox{ \resizebox{0.4\width}{0.4\height}{
		\begin{ytableau}
			*(darkgray) & *(darkgray) & *(darkgray) \\
			*(darkgray) & *(darkgray) & \\
			*(darkgray) &
		\end{ytableau}
		} } }
	+ \vcenter{ \hbox{ \resizebox{0.4\width}{0.4\height}{
		\begin{ytableau}
			*(darkgray) & *(darkgray) & *(darkgray) \\
			*(darkgray) & *(darkgray) & \\
			*(darkgray) & &
		\end{ytableau}
		} } }
	+ 2 \times \vcenter{ \hbox { \resizebox{0.4\width}{0.4\height}{
		\ydiagram{2+1, 1+2, 0+2} } } }
	+ \vcenter{ \hbox { \resizebox{0.4\width}{0.4\height}{
		\ydiagram{2+3, 1+3, 0+3} } } }
	+ 3 \times \vcenter{ \hbox { \resizebox{0.4\width}{0.4\height}{
		\ydiagram{2+4, 1+3, 0+3} } } }
\end{align}
\begin{align}
\nonumber
	+ \vcenter{ \hbox { \resizebox{0.4\width}{0.4\height}{
		\ydiagram{2+4, 1+4, 0+3} } } }
	+ \vcenter{ \hbox { \resizebox{0.4\width}{0.4\height}{
		\ydiagram{2+4, 1+4, 0+5} } } }
	+ \vcenter{ \hbox { \resizebox{0.4\width}{0.4\height}{
		\ydiagram{2+5, 1+5, 0+5} } } }
  = \begin{array}{cccccccc}
     & & & 9 & 7 & 7 & 6 & 1 \\
     & & 12 & 9 & 7 & 3 & 1 & \\
     & 11 & 10 & 7 & 2 & 2 & & \\
     \textcolor{gray!50}{9} & \textcolor{gray!50}{7} & \textcolor{gray!50}{7} &
     \textcolor{gray!50}{6} & \textcolor{gray!50}{1} & & &
    \end{array}
\end{align}
We also show how $\Lambda$ is recorded in the path diagram
in Figure \ref{figRev_ex_pat_diag}.
The pivot slices are indicated.
\begin{figure}
  \centering
  \includegraphics[scale=0.1]{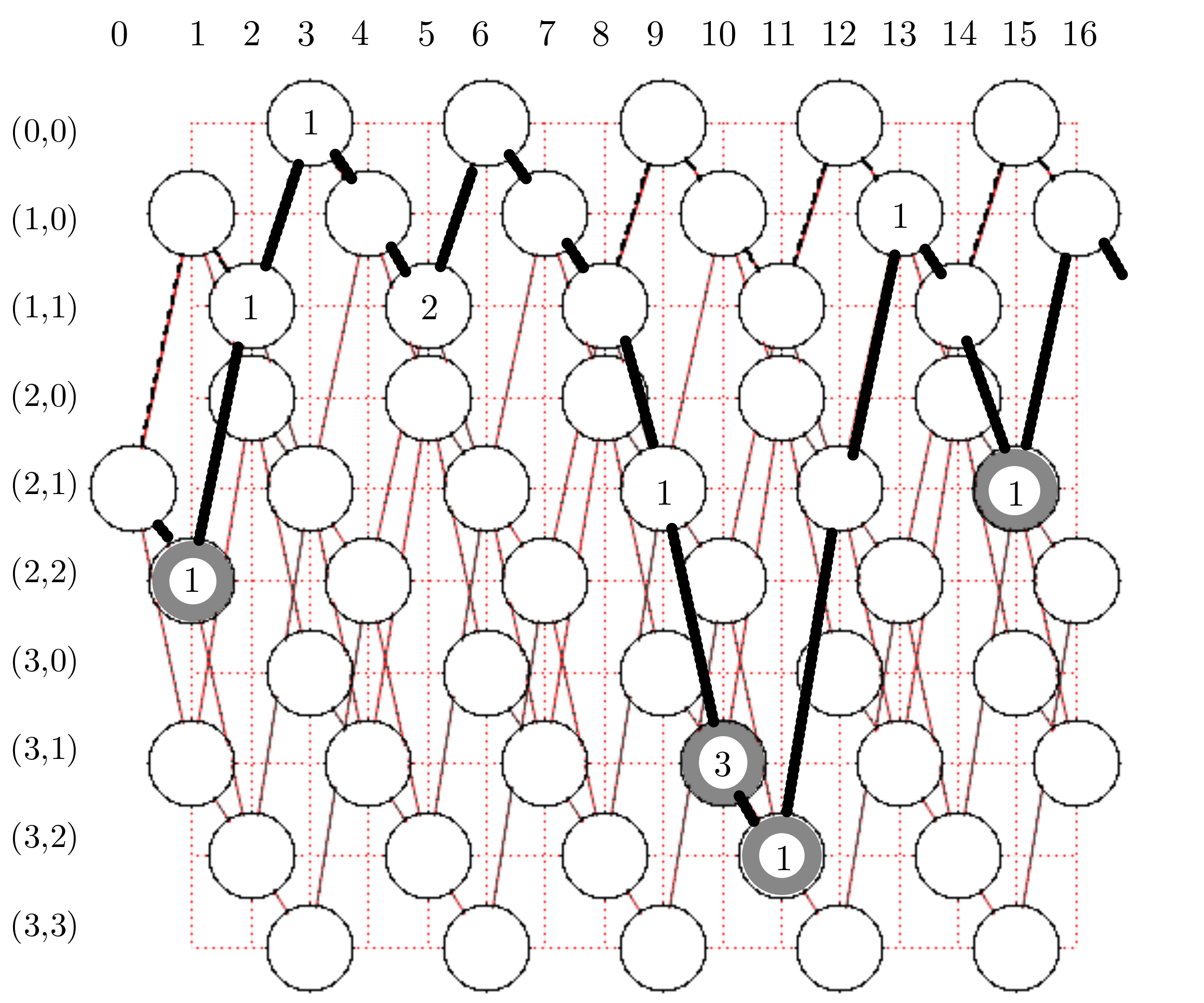}
  \caption{ A cylindric partition shown in the path diagram, the pivots are indicated.  }
\label{figRev_ex_pat_diag}
\end{figure}

%

One can endow the outlined construction in the above proof 
with much fine print.  
For instance, if $\beta = \varepsilon$, the empty partition, 
then the infinite path in the path diagram is found 
by tiling the infinite horizontal strip to the right of the zero slice.  
\begin{align}
\nonumber
\vcenter{ \hbox{ \resizebox{0.6\width}{0.6\height}{
	\begin{ytableau}
		*(darkgray) \phantom{0} & *(darkgray) \phantom{0} & *(darkgray) \phantom{0} & 4 \\
		*(darkgray) \phantom{0} & *(darkgray) \phantom{0} & 2 & 5 \\
		*(darkgray) \phantom{0} & 1 & 3 & 6
	\end{ytableau}
	} } } \!\! \cdots
\end{align}

We will call this the \emph{default path}.  
As an example, the default path on the path diagram of cylindric partitions
with profile $c=(0,3,0)$ is shown in Figure \ref{figDef_path_ex_30}.
Notice that the shape of zero is $(3,0)$.
\begin{figure}
  \centering
  \includegraphics[scale=0.1]{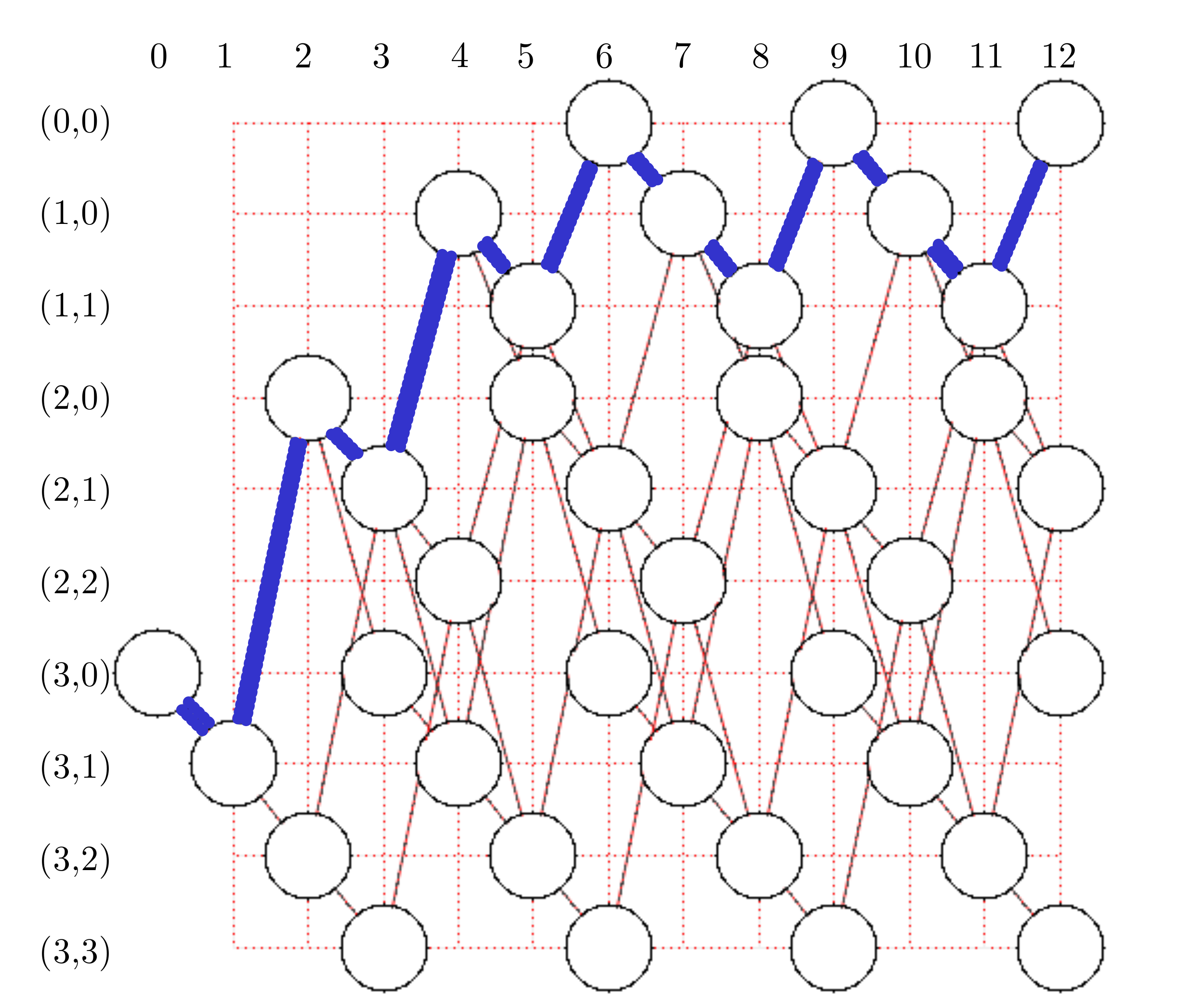}
  \caption{ Default path in profile $c=(0, 3, 0)$.  }
\label{figDef_path_ex_30}
\end{figure}

Between two successive pivots,
there is a mesh of paths in the path diagram.  
The one without any pivots in between 
is the one in which we choose the smallest possible shape 
in the lexicographic ordering for the shape of the next slice.  
An example of rank $r = 3$ and level $\ell = 3$ is shown
in Figure \ref{figMesh_and_upper_env_r3}.
\begin{figure}
  \centering
  \includegraphics[scale=0.1]{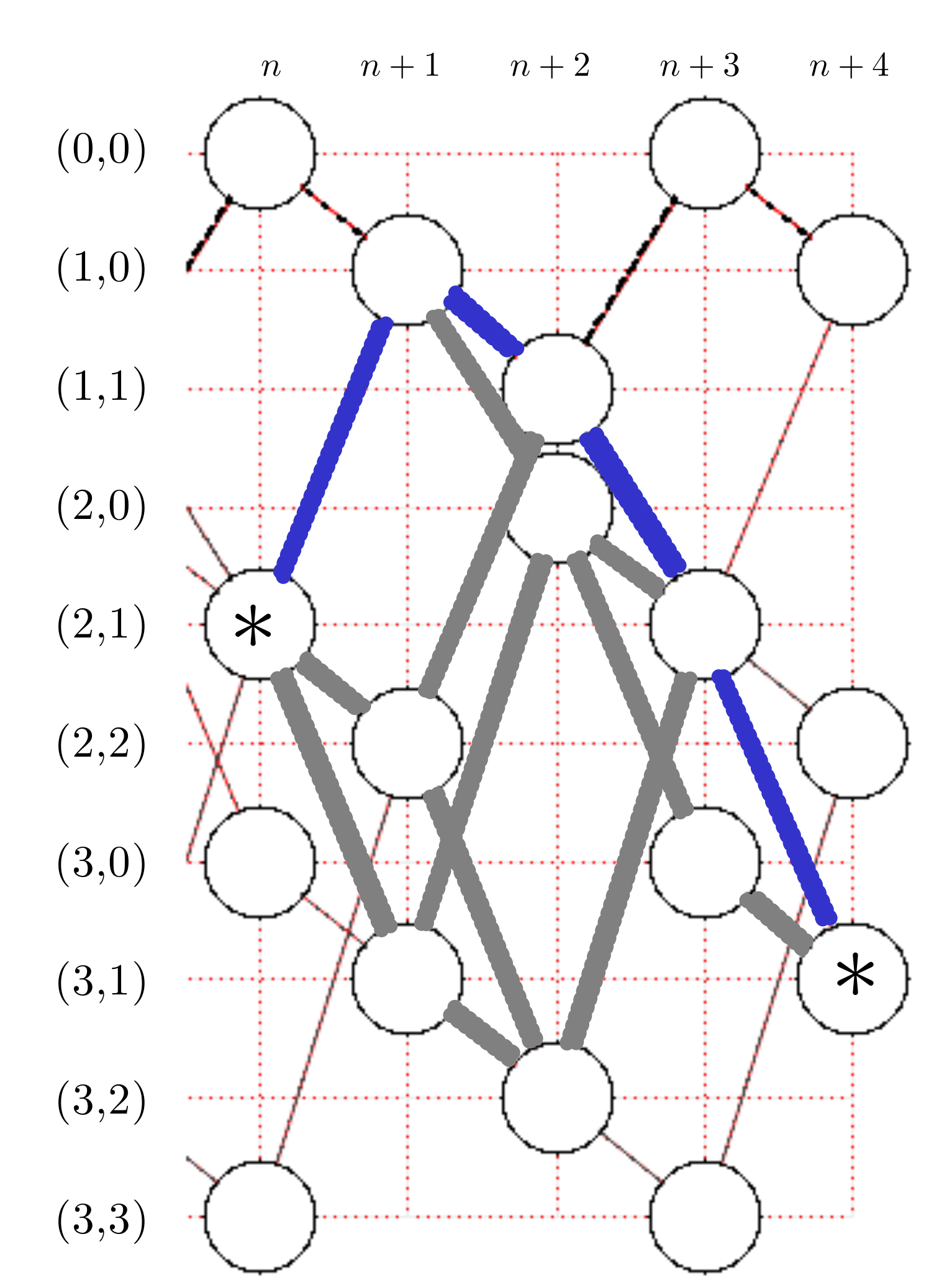}
  \hspace{20mm}
  \includegraphics[scale=0.1]{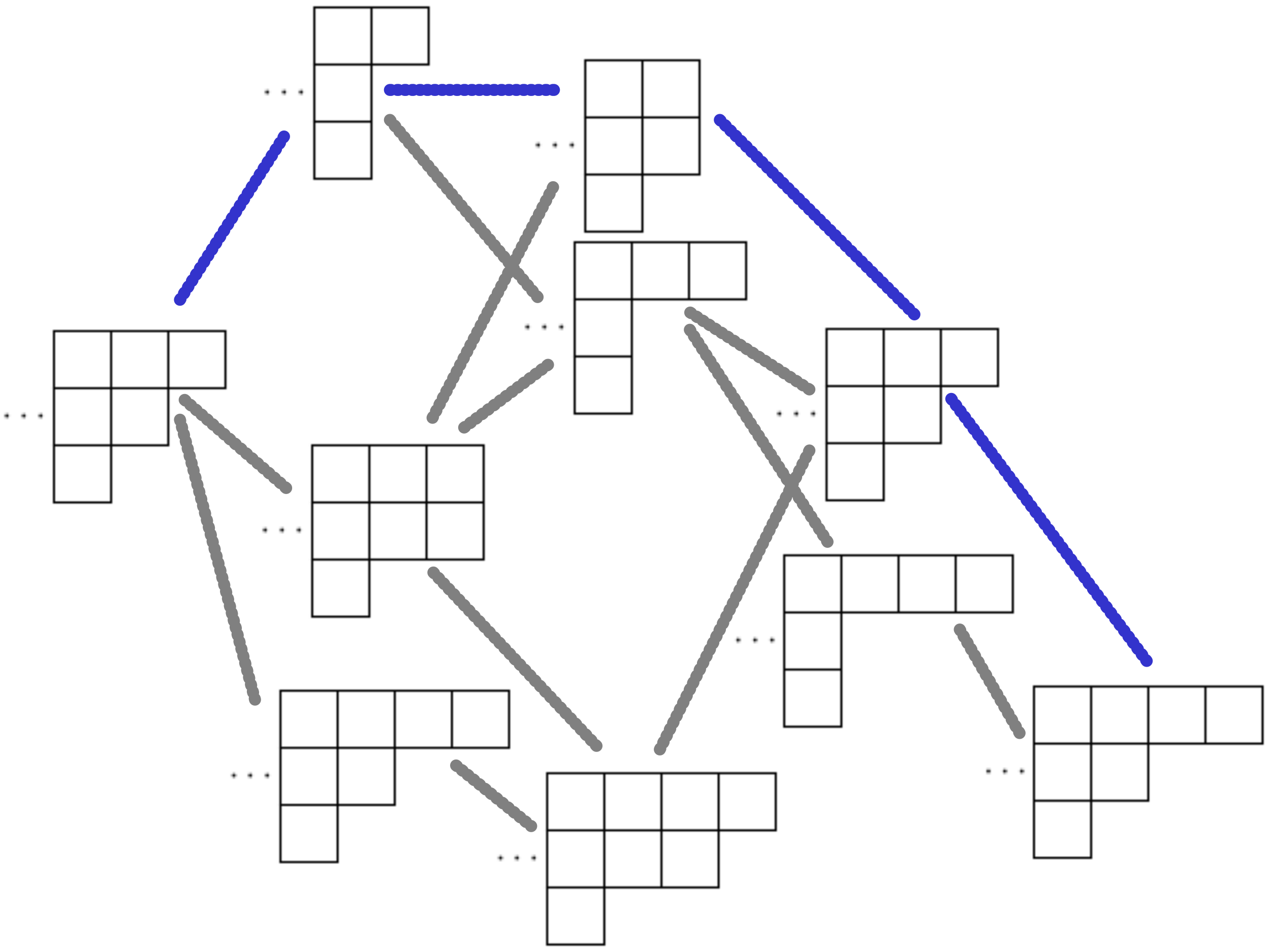}
  \caption{ The mesh and the upper envelope of paths between two successive pivots,
     and the corresponding portion of the (sideways) Hasse diagram.  }
\label{figMesh_and_upper_env_r3}
\end{figure}
Suppose that the slices with shapes $(2, 1)$ and $(3,1)$,
and weights $n$ and $n+4$, respectively,
are the only shown pivots
in the figure on the left in Figure \ref{figMesh_and_upper_env_r3}.
Then, the reader can check that any other path than the indicated thick one
will introduce a new pivot.
In fact, in some of those paths one or both of the indicated pivots
may not be pivots anymore.
This is more clearly seen if we just go back to the sideways Hasse diagram
and draw the slices appearing as vertices on the mesh of paths above,
as seen in the figure on the right in Figure \ref{figMesh_and_upper_env_r3}.



If $\sigma = (\sigma_1, \ldots, \sigma_{r-1})$ 
is the shape of a pivot, then $\sigma_1 \geq 2$.  
The default path falls back to the shapes 
\begin{align}
\nonumber 
  (0, \ldots, 0) \to (1, 0, \ldots, 0) \to 
  \cdots \to (1, \ldots, 1) \to (0, \ldots, 0)
\end{align}
as quickly as possible.  
As pointed out in the proof, none of these shapes can be a pivot.  

The difference between the above construction
and Tingley's bijection~\cite{Tingley, Tingley-Correction}
is that Tingley performs some \emph{moves} on the cylindric partition
to find the unrestricted partition,
while we do not alter the cylindric partition at all.

For the sake of completeness, we include the definition of a pivot slice,
summarizing the main idea in the proof of Theorem \ref{thmCylPtnVsPtnPairsFullCase}.

\begin{defn}
\label{defPivot}
  For a fixed profile $c$,
  let ${}_{1}\Sigma$ $> {}_{2}\Sigma$ $\cdots$ $> {}_{m}\Sigma$ $> {}_{m+1}\Sigma = E$
  be a chain of slices, not necessarily of consecutive weight.
  For each of the slices ${}_{j}\Sigma$, $j = $ 1, 2, \ldots, $m$,
  consider the space before it,
  i.e. the skew diagram that is inside ${}_{j}\Sigma$, but outside ${}_{j+1}\Sigma$;
  and for $j = $ 2, 3, \ldots, $m$, the space after it,
  i.e. the skew diagram that is outside ${}_{j}\Sigma$, but inside ${}_{j-1}\Sigma$.
  The space after ${}_{1}\Sigma$ is the infinite strip to the right of it.
  If the space after ${}_{j}\Sigma$ has any boxes
  strictly to the left of a box in the space before it,
  ${}_{j}\Sigma$ is called a \emph{pivot} slice in the given chain.
\end{defn}

\section{Generating functions for cylindric partitions into distinct parts}
\label{secDist}

We present the technical proof of Theorem \ref{thmDistPartsGeneral} here.  

\begin{proof}[Proof of Theorem \ref{thmDistPartsGeneral}]
Fix the profile $c$. 
Let $M$ be a cylindric partition with profile $c$ into distinct parts.  
Let 
\begin{align}
\nonumber
  f_1 \cdot {}_1\Sigma + f_2 \cdot {}_2\Sigma + \cdots 
  f_n \cdot {}_n\Sigma
\end{align}
be the decomposition of $M$ into slices such that 
\begin{align}
\nonumber 
  {}_1\Sigma > {}_2\Sigma > \cdots > {}_n\Sigma > E, 
\end{align}
and $f_j > 0$ for $j = 1, 2, \ldots, n$.  
We must have 
$ \vert {}_j\Sigma \vert - \vert {}_{j+1}\Sigma \vert = 1 $ 
for $j = 1, 2, \ldots, n$, and with the identification $E = {}_{n+1}\Sigma$.  
Because if that difference is greater than one, 
then there are two or more squares between slices 
${}_j\Sigma$ and ${}_{j+1}\Sigma$, causing a repeated part in $M$.  
Since $\vert {}_{n+1}\Sigma \vert = \vert E \vert = 0$, 
we deduce that 
\begin{align}
\nonumber 
  \vert {}_j\Sigma \vert = n+1 - j
\end{align}
for $j = 1, 2, \ldots, n$.  
In the path diagram, $M$ will be registered as
in Figure \ref{figDist_path_diag_ex_0}.
\begin{figure}
  \centering
  \includegraphics[scale=0.1]{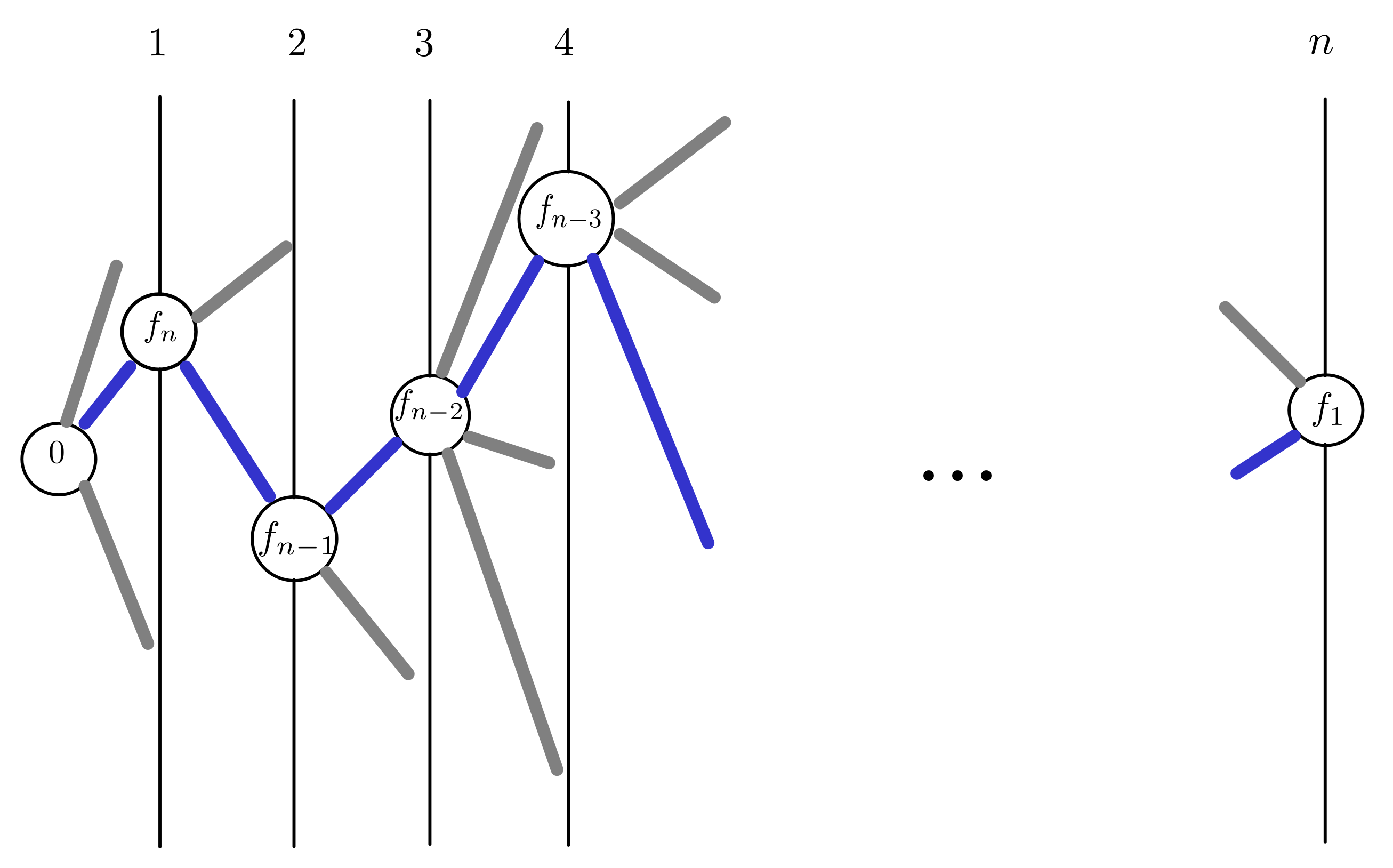}
  \caption{ A cylindric partition into distinct parts in a path diagram.  }
\label{figDist_path_diag_ex_0}
\end{figure}

The weight of $M$ is $\sum_{j=1}^n (n+1-j)f_j$.  
This is the weight of a partition in which each part 1, 2, \ldots, $n$
appears at least once.  
As such, its conjugate is a partition into $n$ different parts, 
and is generated by $\frac{ q^{\binom{n+1}{2}}}{ (q; q)_n }$.  
Each such partition may be obtained multiple times 
on the path diagram.
To be more precise, let $a_n$ be the number of chains of length $n$ 
starting at the node corresponding to the zero slice.  
We now have 
\begin{align}
\label{eqDistPartGenFuncRaw}
  D_c(q) = \sum_{M} q^{M} = \sum_{n \geq 0} \frac{ a_n q^{ \binom{n+1}{2} } }{ (q; q)_n }.  
\end{align}
The middle sum is over all cylindric partitions $M$ with profile $c$ into distinct parts.  

We claim that depending on the value $s \equiv n \pmod{r}$, 
where $s = 0, 1, \ldots, r-1$, and $r$ is the rank of the cylindric partition, 
or the length of the composition $c$, 
\begin{align}
\nonumber 
  a_{nr+s} 
  = & \alpha_{s, 1}^0 \left( \gamma_1^0 \right)^n + \cdots 
    + \alpha_{s, A_0}^0 \left( \gamma_{A_0}^0 \right)^n \\ 
\nonumber 
  & + \alpha_{s, 1}^1 n \left( \gamma_1^1 \right)^n + \cdots 
    + \alpha_{s, A_1}^1 n \left( \gamma_{A_1}^1 \right)^n \\ 
\nonumber 
  & \vdots \\ 
\label{eqCharacterize_a_mod_r}
  & + \alpha_{s, 1}^m n^m \left( \gamma_1^m \right)^n + \cdots 
    + \alpha_{s, A_m}^m n^m \left( \gamma_{A_m}^m \right)^n 
\end{align}
for certain $m \in \mathbb{Z}^{+}$, $A_0, \ldots, A_m \in \mathbb{N}$, 
doubly indexed $\gamma_{\cdots}^{\cdots} \in \mathbb{C}$, 
and trebly indexed $\alpha_{\cdots, \cdots}^{\cdots} \in \mathbb{C}$,
except for a few initial values.
The $\alpha_{\cdots, \cdots}^{\cdots}$'s depend on $s$, 
but $\gamma_{\cdots}^{\cdots}$'s do not.  
We delay establishing this claim, 
and present the remaining part of the proof first.  
Substituting \eqref{eqCharacterize_a_mod_r} in \eqref{eqDistPartGenFuncRaw}, 
and adjusting for the first few terms with the help of a polynomial $rg_c(q)$,
we have 
\begin{align}
\nonumber
  D_c(q) = rg_c(q) + \sum_{s = 0}^{r-1} \sum_{n \geq 0}
    \frac{ a_{nr+s} q^{ \binom{nr+s+1}{2} } }{ (q; q)_{nr+s} } 
  = rg_c(q) + \sum_{s = 0}^{r-1} \sum_{j = 0}^m \sum_{k = 1}^{A_k} \alpha_{s, k}^j \sum_{n \geq 0}
    \frac{ n^j \left( \gamma_k^j \right)^n q^{ \binom{nr+s+1}{2} } }{ (q; q)_{nr+s} }.  
\end{align}
The outermost triple sum is finite, 
so it is enough to work on the innermost series on $n$.  
Simplifying notation, we will show that 
\begin{align}
\nonumber
  \sum_{n \geq 0} \frac{ n^j \gamma^n q^{ \binom{nr+s+1}{2} } }{ (q; q)_{nr+s} }
\end{align}
is a finite linear combination of infinite products, 
and infinite products multiplied by Lambert series.  
We will examine two different cases $j = 0$ and $j > 0$.  

When $j = 0$, observe that 
\begin{align}
\nonumber 
  \sum_{n \geq 0} \frac{ \gamma^n q^{ \binom{nr+s+1}{2} } }{ (q; q)_{nr+s} }
  = \left( \sqrt[r]{\gamma} \right)^{-s} \sum_{n \geq 0} 
  \frac{ \left( \sqrt[r]{\gamma} )\right)^{nr+s} q^{ \binom{nr+s+1}{2} } }{ (q; q)_{nr+s} }, 
\end{align}
where one can use the principal branch for the $r$th root.  
The latter series is the $s \pmod{r}$ indexed terms of 
\begin{align}
\nonumber 
  ( -\sqrt[r]{\gamma}q; q)_\infty = \sum_{n \geq 0}
  \frac{ \left( \sqrt[r]{\gamma} )\right)^{n} q^{ \binom{n+1}{2} } }{ (q; q)_{n} }, 
\end{align}
where the identity is Euler's~\cite{GR}.  

It is possible to extract the $s \pmod{r}$ indexed terms of the series
on the right-hand side of the displayed equation above.
This must be a well-known procedure for power series, 
but we outline it here.  
If $r$ is a prime number, we'll just add
\begin{align}
\nonumber 
  \xi^{-js}( -\xi^j \sqrt[r]{\gamma}q; q)_\infty = \sum_{n \geq 0}
  \frac{ \xi^{j(n-s)} \left( \sqrt[r]{\gamma} )\right)^n 
    q^{ \binom{n+1}{2} } }{ (q; q)_{n} }, 
\end{align}
side by side for $j = 0, 1, \ldots, r-1$ and divide by $r$ 
for a primitive $r$th root of unity, e.g. $\xi = \mathrm{e}^{ \frac{2 \pi i}{r} }$.  
We rely on the facts that $1 + \xi + \xi^2 + \cdots + \xi^{r-1} = 0$ and $\xi^r = 1$.
If $r$ is not a prime, then we place the desired indices 
in nested arithmetic progressions, 
and extract terms in one progression at a time.  
For example, for obtaining terms with indices $45n+23$, 
we rewrite $ 45n + 23 = 5( 3 (3n + 1) + 1 ) + 3 $,
and extract the $5n + 3$ terms and reindex first, and from among them
the $3n+1$ terms and reindex, and finally the $3n+1$ terms from inside the last batch.

When $j > 0$, we need another twist.  
This time, we have the series 
\begin{align}
\nonumber 
  \sum_{n \geq 0} 
  \frac{ n^j \left( \sqrt[r]{\gamma} )\right)^{nr+s} 
    q^{ \binom{nr+s+1}{2} } }{ (q; q)_{nr+s} }
\end{align}
to work on.  
We use the fact that it is possible to express $n^j$ as a linear combination 
of $(nr+s)^j$, $(nr+s)^{j-1}$, \ldots, $(nr+s)^0$.  
We regard $n$ as the indeterminate, and $r$ and $s$ as fixed parameters.  
We write 
\begin{align}
\nonumber 
  n^j = b_j (nr+s)^j + b_{j-1}(nr+s)^{j-1} + \cdots + b_0 (nr+s)^0, 
\end{align}
and identify coefficients of $n^j$, $n^{j-1}$, \ldots, $n^0$ 
after expansion of binomials and collecting the like powers.  
The resulting linear system for $b_{\cdots}$'s will have a lower triangular 
coefficient matrix with nonzero entries on the diagonal, 
and not all right hand sides are zeros.  
Thus, there is a unique non-trivial solution.  
Therefore, without loss of generality, 
we can work on 
\begin{align}
\nonumber 
  \sum_{n \geq 0} 
  \frac{ (nr+s)^j \left( \sqrt[r]{\gamma} )\right)^{nr+s} 
    q^{ \binom{nr+s+1}{2} } }{ (q; q)_{nr+s} }
  = \left[ \left( z \frac{\mathrm{d}}{\mathrm{d} z} \right)^{j} \sum_{n \geq 0}
  \frac{ \left( \sqrt[r]{\gamma} )\right)^{nr+s} z^{nr+s}
    q^{ \binom{nr+s+1}{2} } }{ (q; q)_{nr+s} } \right]_{z = 1}
\end{align}
Along with the linearity of the derivative, 
the rest is done similar to the $j = 0$ case.  

The proof will be finished once we show \eqref{eqCharacterize_a_mod_r}.  
We denote the number of paths in the path diagram 
ending in a specific slice $\Sigma$ by $a_n^\sigma$, 
where $n = \vert \Sigma \vert$ and $\sigma$ is the shape of $\Sigma$.  
It follows that
\begin{align}
\label{eq_a_n_as_a_n_sigma_sum}
  a_n = \sum_{\sigma} a_n^{\sigma}, 
\end{align}
where the sum is over all shapes.  
We also have 
\begin{align}
\label{eq_a_n_as_sum_of_a_n_minus_ones} 
  a_n^{\sigma} = a_{n-1}^{\tau_1} + a_{n-1}^{\tau_2} + \cdots + a_{n-1}^{\tau_t}.  
\end{align}
The sum on the right hand side is over all slices with weight $(n-1)$ 
connected to $\Sigma$ with an edge in the path diagram.  
When the rank $r > 1$, none of the $\tau_j$'s may be equal to $\sigma$, 
since their weights $(n-1)$ and $n$ are not congruent $\pmod{r}$.  
To incorporate all shapes, we can extend the sum as 
\begin{align}
\nonumber 
  a_n^{\sigma} = \varepsilon_1 a_{n-1}^{\tau_1} + \cdots + \varepsilon_u a_{n-1}^{\tau_u}, 
\end{align}
where $\varepsilon_j$ is 0 or 1 depending on the presence of an edge 
between the implied slices.  
Writing the equation for each slice of weight $n$, we first have
\begin{align}
\nonumber 
  a_n^{\sigma_1} = & \varepsilon_{11} a_{n-1}^{\tau_1} + \cdots 
                  + \varepsilon_{1u} a_{n-1}^{\tau_u} \\ 
\nonumber 
  & \vdots \\ 
\nonumber 
  a_n^{\sigma_v} = & \varepsilon_{v1} a_{n-1}^{\tau_1} + \cdots 
                  + \varepsilon_{vu} a_{n-1}^{\tau_u}, 
\end{align}
then, by iteration 
\begin{align}
\label{eq_a_n_shifted_r_1} 
  a_n^{\sigma_1} = & \alpha_{11} a_{n-r}^{\sigma_1} + \cdots 
                  + \alpha_{1u} a_{n-r}^{\sigma_v} \\ 
\nonumber 
  & \vdots \\ 
\label{eq_a_n_shifted_r_v} 
  a_n^{\sigma_v} = & \alpha_{v1} a_{n-r}^{\sigma_1} + \cdots 
                  + \alpha_{vv} a_{n-r}^{\sigma_v}, 
\end{align}
where the $\alpha_{ij}$'s are completely determined.  
This shows us the dependence of $a_n$ on the value $s \equiv n \pmod{r}$.
Different $s$'s may, and do, correspond to different systems of equations.
However; the left end, or the smaller end, of the path diagram
may not fit this characterization.
We noted that the path diagram is eventually periodic, rather than periodic.
Therefore, we may need to treat the first few values separately.
With this initial exception in mind,
\begin{align}
\nonumber 
  a_n \left[ = a_n^{\sigma_1} + \cdots + a_n^{\sigma_v} \right] 
  = \left( \alpha_{11} + \cdots + \alpha_{1v} \right) a_{n-r}^{\sigma_v} 
    + \cdots 
    + \left( \alpha_{v1} + \cdots + \alpha_{vv} \right) a_{n-r}^{\sigma_v}.  
\end{align}
Now we choose and fix $s$, replace $a_{nr+s}^{\sigma_j}$ by $b_n^j$, 
and, slightly abusing notation, focus on 
\begin{align}
\label{eq_b1} 
  b_n^1 - & \alpha_{11} b_{n-1}^1 - \cdots - \alpha_{1v} b_{n-1}^v = 0 \\ 
\nonumber 
  & \vdots \\ 
\label{eq_bv} 
  b_n^v - & \alpha_{v1} b_{n-1}^1 - \cdots - \alpha_{vv} b_{n-1}^v = 0
\end{align}
with $b_n = b_n^1 + \cdots + b_n^v$.  
Our goal is to find a recurrence for $b_n$.  
To construct that, 
we first add \eqref{eq_b1} through \eqref{eq_bv} side by side and obtain
\begin{align}
\label{eq_b}
  b_n - \left( \alpha_{11} + \cdots + \alpha_{v1} \right) b_{n-1}^1 
  - \cdots 
  - \left( \alpha_{1v} + \cdots + \alpha_{vv} \right) b_{n-1}^v 
  = 0.   
\end{align}
Then, we rewrite \eqref{eq_b1} through \eqref{eq_bv} replacing $n$ by $(n-1)$, 
vertically aligning $b_{n-1}^j$'s and $b_{n-2}^j$'s for convenience.  
Next, multiply \eqref{eq_b} through by $\beta^0$, 
and the 1-shifted \eqref{eq_b1} through \eqref{eq_bv} by 
$\beta_1^1$, \ldots, $\beta_v^1$, in their respective order.  
After that, we add these $v+1$ equations side by side, 
and identify the coefficients of $b_{n-1}^1$ through $b_{n-1}^v$, 
and those of $b_{n-2}^1$ through $b_{n-2}^v$.  
Because once those two sets of coefficients are identical among themselves, 
the sums of $b_{n-1}^j$'s and $b_{n-2}^j$'s may be replaced by 
$b_{n-1}$ and $b_{n-2}$, respectively.  
This will yield a recurrence for $b_n$.  

So far, we have $2v-2$ homogeneous equations for $v+1$ variables, 
namely $\beta^0$, $\beta_j^1$ for $j=$ 1, \ldots, $v$.  
For general $v$, $2v-2$ may be greater than or equal to $v+1$, 
so we are not guaranteed a non-trivial solution.  
In that case, we replace $n$ by $(n-2)$ in \eqref{eq_b1} through \eqref{eq_bv}, 
multiply them through $\beta_j^2$ for $j = $ 1, \ldots, $v$ in their respective order, 
and incorporate in the above picture.  
Then, we will have $3v-3$ homogeneous equations for $2v+1$ variables.  
If there are no non-trivial solutions yet, 
we keep repeating the same procedure, 
replacing $n$ by $(n-3)$, \ldots, $(n-f+1)$ in \eqref{eq_b1} through \eqref{eq_bv}, 
as many times as necessary to 
have greater number of variables than the number of equations, i.e. 
$(f-1)v + 1 > fv - f$.  
Equivalently, we want $f > v-1$, therefore $f = v$ will work.  
We are guaranteed to have a recurrence of degree $f = v$ for $b_n$.  
\begin{align}
\nonumber 
  A_0 b_n + A_1 b_{n-1} + \cdots + A_v b_{n-v} = 0.  
\end{align}
Finding $v$ initial values $b_0$, \ldots, $b_{v-1}$,
and deciding how many of them at the beginning do not fit the recurrence is routine.

By the well-known characterization of sequences 
satisfying linear recurrences with constant coefficients~\cite[Proposition 4.2.2]{Stanley},
we arrive at \eqref{eqCharacterize_a_mod_r}, concluding the proof.  
\end{proof}

We will give a concrete example a little further below,
backed up by some shortcuts.
We will also comment on why those shortcuts were not used in the proof.

Here, too, we can read more into the above proof.
The demonstration of \eqref{eqCharacterize_a_mod_r} requires 
the use of the fundamental theorem of algebra~\cite{DF}. 
So, despite all the construction, 
Theorem \ref{thmDistPartsGeneral} is an existence result.  
The explicit formulas depend on our capacity to solve 
the recurrence for $b_n$'s pertained to in the last part of the proof.  

We can adapt the last part of the proof 
to find a recurrence for $a_{nr+s}^{\sigma}$ for certain $s$ and $\sigma$, 
and hence obtain a similar formula 
to \eqref{eqCharacterize_a_mod_r} for $a_{nr+s}^{\sigma}$.  
Thanks to equations \eqref{eq_a_n_as_a_n_sigma_sum}
and \eqref{eq_a_n_as_sum_of_a_n_minus_ones}, 
the same $\gamma_{\cdots}^{\cdots}$ are shared by all $a_{nr+s}^{\sigma}$'s
and $a_{nr+s}$'s, 
independent of $s$ and $\sigma$.  

In the shape transition graph, 
if we reorder the shapes so that the ones with weights $\equiv 0 \pmod{r}$ 
are listed first, those with weight $\equiv 1 \pmod{r}$ are listed next, etc., 
the adjacency matrix~\cite{Harary-Palmer} $A$ 
will have the following $r \times r$ block diagonal form.   
\begin{align}
\nonumber 
  A = \begin{bmatrix}
    0 & \cdots & & 0 & \aast \\
    \aast & & & & 0 \\ 
    0 & \aast & & & 0 \\ 
    \vdots & & \ddots & & \vdots \\ 
    0 & \cdots & 0 & \aast & 0
  \end{bmatrix} 
\end{align}
In other words, it will be a block permutation matrix with square matrices, 
not necessarily of the same size, on the diagonal.  
For convenience, we can take weights of shapes as partitions.  
It is possible to incorporate the shape of zero, 
and offset all other weights by the same amount.  
But this will not essentially change the discussion.  

For example, if we reorder the shapes as
$(0, 0)$, $(2, 1)$, $(1, 0)$, $(2, 2)$, $(1, 1)$,  $(2, 0)$
in rank $r = 3$ and level $\ell = 2$ cylindric partitions,
we will obtain the block matrix
\begin{align}
\nonumber
	A = \begin{blockarray}{ccccccc}
	& (0, 0) & (2, 1) & (1, 0) & (2, 2) & (1, 1) & (2, 0) \\
		\begin{block}{c[cc|cc|cc]}
			(0, 0) & 0 & 0 & 0 & 0 & 1 & 0 \\
			(2, 1) & 0 & 0 & 0 & 0 & 1 & 1 \\
			\cmidrule{2-7}
			(1, 0) & 1 & 1 & 0 & 0 & 0 & 0 \\
			(2, 2) & 0 & 1 & 0 & 0 & 0 & 0 \\
			\cmidrule{2-7}
			(1, 1) & 0 & 0 & 1 & 1 & 0 & 0 \\
			(2, 0) & 0 & 0 & 1 & 0 & 0 & 0 \\
		\end{block}
	\end{blockarray}.
\end{align}

The adjacency matrix $A$ carries the same information 
as the equation \eqref{eq_a_n_as_sum_of_a_n_minus_ones}, 
and its $r$th power $A^r$, the following block diagonal matrix, 
as \eqref{eq_a_n_shifted_r_1} through \eqref{eq_a_n_shifted_r_v}.  
\begin{align}
\nonumber 
  A^r = \begin{bmatrix}
    \aast & & 0 \\ 
    & \ddots & \\ 
    0 & & \aast 
  \end{bmatrix}
\end{align}

In the running example,
\begin{align}
\nonumber
	A^3 = \begin{blockarray}{ccccccc}
	& (0, 0) & (2, 1) & (1, 0) & (2, 2) & (1, 1) & (2, 0) \\
		\begin{block}{c[cc|cc|cc]}
			(0, 0) & 1 & 2 & 0 & 0 & 0 & 0 \\
			(2, 1) & 2 & 3 & 0 & 0 & 0 & 0 \\
			\cmidrule{2-7}
			(1, 0) & 0 & 0 & 3 & 2 & 0 & 0 \\
			(2, 2) & 0 & 0 & 2 & 1 & 0 & 0 \\
			\cmidrule{2-7}
			(1, 1) & 0 & 0 & 0 & 0 & 3 & 2 \\
			(2, 0) & 0 & 0 & 0 & 0 & 2 & 1 \\
		\end{block}
	\end{blockarray}.
\end{align}

The establishment of $a_{nr+s}^{\sigma}$ as a closed formula 
and the following arguments above show that 
all blocks on the diagonal of $A^r$ have 
the same non-zero eigenvalues with the same multiplicities.  
Consequently, or by rank arguments, 
the bigger blocks have zeros, or more zeros, as their extra eigenvalues.  
For example, the eigenvalues of the above $A^3$ are $2 \pm \sqrt{5}$.

If we worked with rank $r = 3$ and level $\ell = 3$ cylindric partitions,
the blocks of $A^3$ would have been
\begin{align}
\nonumber
	\begin{bmatrix}
		1 & 2 & 0 & 1 \\ 2 & 6 & 2 & 2 \\ 1 & 2 & 1 & 0 \\ 0 & 2 & 1 & 1
	\end{bmatrix},
	\begin{bmatrix}
		3 & 3 & 2 \\ 2 & 3 & 3 \\ 3 & 2 & 3
	\end{bmatrix}, \textrm{ and }
	\begin{bmatrix}
		3 & 2 & 3 \\ 3 & 3 & 2 \\ 2 & 3 & 3
	\end{bmatrix}.
\end{align}
The eigenvalues of both of the smaller matrices are 8 and $\frac{1}{2} \pm \frac{\sqrt{3}}{2}i$,
and the larger matrix has the additional eigenvalue 0.

Back to the rank $r = 3$ and level $\ell = 2$ example,
we see that the matrix $A^3$ is diagonalizable.
This is because the eigenvalues of each block are distinct.
It follows that $a_{3n+j} = A_j (2 + \sqrt{5})^n + B_j (2 - \sqrt{5})^n$
for $j = 0, 1, 2$.
The generating function $D_{(2, 0, 0)}(q)$ will be
a linear combination of
\begin{align}
\nonumber
	( - \sqrt[3]{2 + \sqrt{5}} q ; q)_\infty,
	& ( - \sqrt[3]{2 - \sqrt{5}} q ; q)_\infty,
	( - \omega \sqrt[3]{2 + \sqrt{5}} q ; q)_\infty,
	( - \omega \sqrt[3]{2 - \sqrt{5}} q ; q)_\infty, \\
\nonumber
	& ( - \omega^2 \sqrt[3]{2 + \sqrt{5}} q ; q)_\infty, \textrm{ and }
	( - \omega^2 \sqrt[3]{2 - \sqrt{5}} q ; q)_\infty,
\end{align}
where $\omega^2 + \omega + 1 = 0$, i.e. $\omega$ is a primitive third root of unity.
We can find the unknown coefficients by comparing
the Maclaurin series of the linear combination of the above infinite products
and that of \eqref{eqDistPartGenFuncRaw}.
Therefore, we need some initial $a_n$'s.
We can do it by placing 1 in the zero node,
and calculating $a_n^\sigma$ for each slice
with weight $n$ and shape $\sigma$
as the sum of the numbers in the immediately preceding slices that are connected to it.
The final touch is adding the numbers in all slices with the same weight,
and hence finding $a_n$.
An example for profile $(2,0,0)$ is shown in Figure \ref{figNum_paths_r3l2}.
\begin{figure}
  \centering
  \includegraphics[scale=0.1]{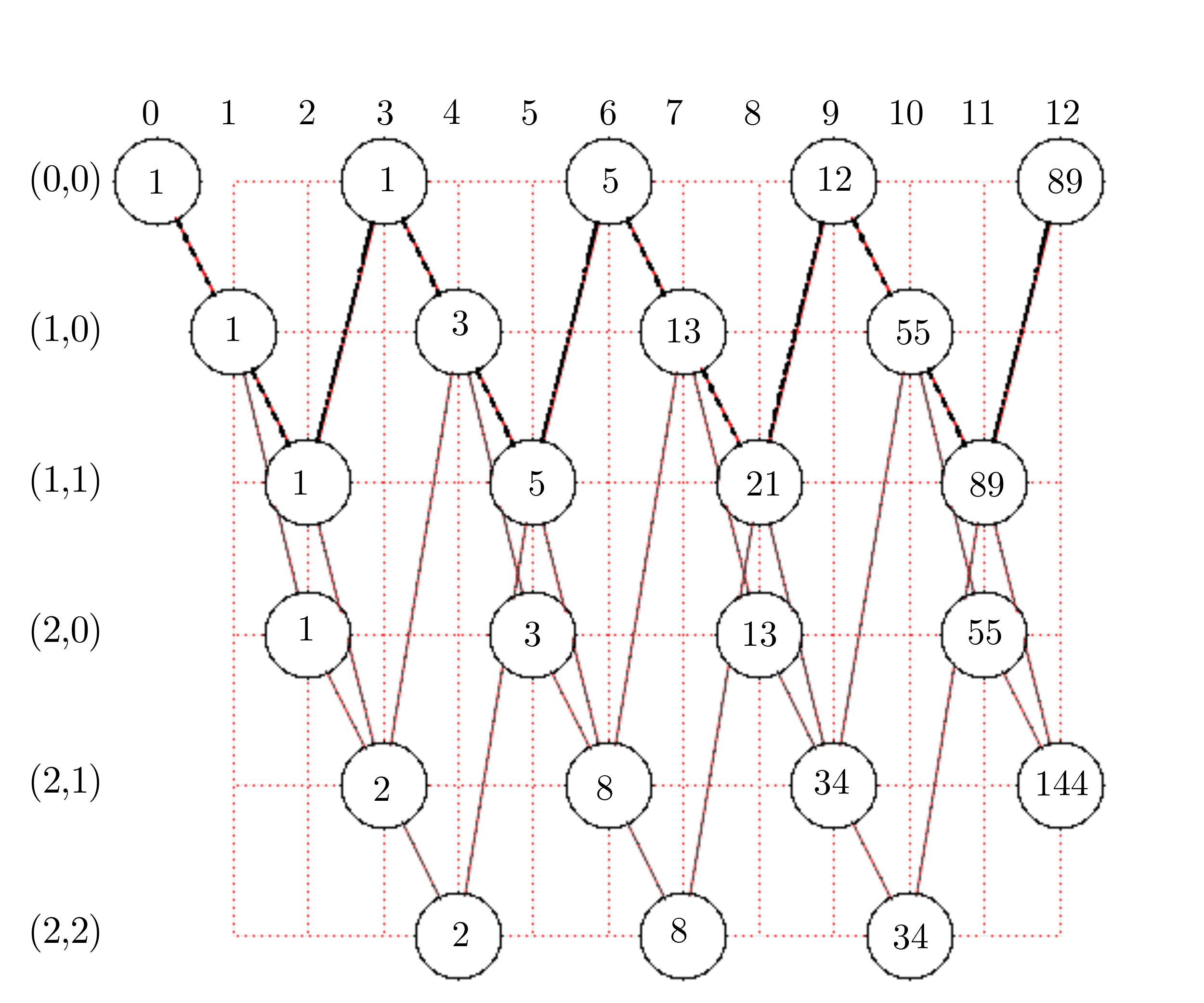}
  \caption{ The number of paths ending in each node.  }
\label{figNum_paths_r3l2}
\end{figure}
We caution the reader that Figure \ref{figNum_paths_r3l2} is a different
utilization of the path diagram.
In particular, different from the slice counting one.
Then, it can be proven that $a_n$'s are the (shifted)
Fibonacci numbers by strong induction~\cite{Rosen}.
So,
\begin{align}
\nonumber
	a_n = A \left(\frac{(1 + \sqrt{5})}{2}\right)^n + B \left(\frac{(1 - \sqrt{5})}{2}\right)^n.
\end{align}
This simplifies the computations, as it is now clear that
$D_{(2,0,0)}(q)$ is a linear combination of
\begin{align}
\nonumber
	\left( - \frac{(1 + \sqrt{5})}{2} q ; q \right)_\infty \textrm{ and }
	\left( - \frac{(1 - \sqrt{5})}{2} q  ; q \right)_\infty,
\end{align}
and there are no rule breaking terms at the beginning.
The comparison of the first few terms of the Maclaurin series mentioned above
will yield
\begin{align}
 D_{(2,0,0)}(q) =
 \left( \frac{1}{2} + \frac{\sqrt{5}}{10} \right) \left( - \frac{(1 + \sqrt{5})}{2} q ; q \right)_\infty
 + \left( \frac{1}{2} - \frac{\sqrt{5}}{10} \right) \left( - \frac{(1 - \sqrt{5})}{2} q ; q \right)_\infty .
\end{align}

The reason that we did not use this matrix characterization in the proof is that
we do not know up front if the constructed matrix blocks are diagonalizable,
let alone if their non-zero eigenvalues are distinct.
For example, a simple computer search reveals that 
the rank $r=5$ and level $\ell = 5$ case involves a 
repeated eigenvalue.  
The smallest block matrix has size $25 \times 25$ in this case.  
Some other examples for larger $r+\ell$ sums are 
$(r, \ell)=$ $(5, 7)$, $(5, 9)$, $(6, 6)$, $(6, 8)$, $(7, 5)$, $(7, 7)$, 
$(8, 6)$, $(9, 5)$, $(10, 5)$, 
with respective sizes of the smallest block matrix 
$66 \times 66$, $143 \times 143$, $75 \times 75$, $212 \times 212$, 
$66 \times 66$, $245 \times 245$, 
$212 \times 212$, $143 \times 143$, $200 \times 200$.  
We did the calculations for $r, \ell \leq 10$ and 
discarded matrices larger than $250 \times 250$.  

It is also an instructive exercise to find $a_0$, $a_1$, $a_2$, \ldots
for the profile $(1,1,1)$ case.
They are 1, 3, 6, 12, 24, 48, 96, 192,
i.e. $3 \cdot 2^{n-1}$ except for $a_0$.
This can be proven, again, by strong induction~\cite{Rosen},
exploiting the structure of the path diagram.
Then, not even requiring the corresponding eigenvalues listed above, one calculates
\begin{align}
\nonumber
	D_{(1,1,1)} = -\frac{1}{2} + \frac{3}{2} (- 2q; q)_\infty
\end{align}


When the rank $r = 2$, the picture is much nicer.  
In the shape transition graph, 
there is an edge from $\sigma$ to $\tau$ 
if and only if there is an edge from $\tau$ to $\sigma$.  
One of those edges corresponds to placing a square 
at the right end of the upper row of a shape, 
and the other edge to that of the lower row of the other shape.  
Thus, the adjacency matrix 
$A = \begin{bmatrix} 0 & \aast \\ \aast & 0 \end{bmatrix}$ 
described above is a symmetric matrix.  
So is the block diagonal matrix $A^2$.  
As such, $A^2$ has real eigenvalues, and it is diagonalizable~\cite{LinAlg}.
Also, the blocks of $A^2$ have the same non-zero eigenvalues with the same multiplicities.
It follows that in equation \eqref{eqCharacterize_a_mod_r} 
$\alpha_{\cdots, \cdots}^j = 0$ if $j > 0$, 
and Theorem \ref{thmDistPartsGeneral} reduces to 

\begin{cor}
\label{corDistPartsRank2}
  With notation as in Theorem \ref{thmDistPartsGeneral}, 
	\begin{align}
	\nonumber
		D_{(a, b)}(q)
		= rg_{(a,b)}(q) & + \sum_{i = 1}^B A_i
			\left( \frac{ ( - \sqrt{\beta_i} q; q )_\infty + ( \sqrt{\beta_i} q; q )_\infty }
				{2} \right) \\
\nonumber
		& + \sum_{i = 1}^B B_i
			\left( \frac{ ( - \sqrt{\beta_i} q; q )_\infty - ( \sqrt{\beta_i} q; q )_\infty }
				{2\sqrt{\beta_i}} \right)
	\end{align}
	for certain $B \in \mathbb{Z}^{+}$, $\beta_i$, $A_i$, $B_i$ $\in \mathbb{R}$,
	$i = 1, 2$ \ldots, $B$, and a polynomial $rg_{(a,b)}(q)$.
\end{cor}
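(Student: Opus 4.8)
The plan is to run the proof of Theorem~\ref{thmDistPartsGeneral} verbatim in the special case $r=2$ and to use the symmetry of the shape transition graph, already noted just above the statement, to kill every $n^{j}$ with $j>0$ in \eqref{eqCharacterize_a_mod_r}. First I would record the rank~$2$ picture: every shape is a one-part partition $(s)$ with $0\le s\le\ell$, there are exactly $\ell+1$ of them, and (as observed before the statement) an edge from $\sigma$ to $\tau$ exists if and only if one from $\tau$ to $\sigma$ does, so the adjacency matrix $A$ of the shape transition graph is \emph{real symmetric}. Reordering the shapes so that those of even weight come first, $A$ takes the block anti-diagonal form $\begin{bmatrix} 0 & B \\ B^{\mathsf T} & 0 \end{bmatrix}$, and hence $A^{2}=\begin{bmatrix} BB^{\mathsf T} & 0 \\ 0 & B^{\mathsf T}B \end{bmatrix}$ is block diagonal with two real symmetric blocks that are positive semidefinite, since $v^{\mathsf T}BB^{\mathsf T}v\ge 0$ for all $v$. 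Thus each block is orthogonally diagonalizable, its eigenvalues are real and $\ge 0$, and $BB^{\mathsf T}$ and $B^{\mathsf T}B$ share the same nonzero eigenvalues $\beta_{1},\ldots,\beta_{B}$ with the same multiplicities; I would discard any zero eigenvalue, as it affects only finitely many $a_{n}$.

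Next I would plug this into equations \eqref{eq_a_n_shifted_r_1}--\eqref{eq_a_n_shifted_r_v} with $r=2$: for each fixed residue $s\in\{0,1\}$ the vector $\bigl(a_{2n+s}^{\sigma}\bigr)_{\sigma}$ is obtained from $\bigl(a_{2(n-1)+s}^{\sigma}\bigr)_{\sigma}$ by left multiplication by one block of $A^{2}$. Because that block is diagonalizable, there are no Jordan (generalized-eigenvector) contributions, so $a_{2n+s}^{\sigma}=\sum_{i=1}^{B}c_{s,i}^{\sigma}\,\beta_{i}^{\,n}$ \emph{with no polynomial-in-$n$ factors}; summing over $\sigma$ via \eqref{eq_a_n_as_a_n_sigma_sum} gives $a_{2n+s}=\sum_{i=1}^{B}\alpha_{s,i}\,\beta_{i}^{\,n}$ for all sufficiently large $n$, with the finitely many small indices where the eventually-periodic path diagram has not yet settled absorbed into $rg_{(a,b)}(q)$, exactly as in the proof of Theorem~\ref{thmDistPartsGeneral}. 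Moreover, since the blocks of $A^{2}$ are real symmetric their spectral projections are real, so all the $c_{s,i}^{\sigma}$ and $\alpha_{s,i}$ are real. This is precisely \eqref{eqCharacterize_a_mod_r} with $m=0$, i.e.\ the case $k=0$ of Theorem~\ref{thmDistPartsGeneral}.

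Finally I would substitute $a_{2n+s}=\sum_{i}\alpha_{s,i}\beta_{i}^{\,n}$ into \eqref{eqDistPartGenFuncRaw}, split the sum according to $s=0$ and $s=1$, and apply the term-extraction step of the proof of Theorem~\ref{thmDistPartsGeneral} with $r=2$ and primitive root of unity $\xi=-1$. The $n$-even part of Euler's product $(-\sqrt{\beta_{i}}q;q)_{\infty}$ is $\tfrac12\bigl[(-\sqrt{\beta_{i}}q;q)_{\infty}+(\sqrt{\beta_{i}}q;q)_{\infty}\bigr]$, which after the normalizing factor $(\sqrt{\beta_{i}})^{-s}$ with $s=0$ yields the $A_{i}$-terms with $A_{i}=\alpha_{0,i}$; the $n$-odd part is $\tfrac12\bigl[(-\sqrt{\beta_{i}}q;q)_{\infty}-(\sqrt{\beta_{i}}q;q)_{\infty}\bigr]$, which with $s=1$ and the factor $(\sqrt{\beta_{i}})^{-1}$ yields the $B_{i}$-terms with $B_{i}=\alpha_{1,i}$; reality of $A_{i},B_{i}$ is inherited from reality of the $\alpha_{s,i}$ (equivalently, comparing Maclaurin coefficients produces a linear system with real data and a unique solution, uniqueness holding because infinite products with distinct $\beta_{i}$ are linearly independent). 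I expect the one genuinely delicate point to be the middle paragraph: arguing cleanly that diagonalizability of each block of $A^{2}$ really does exclude polynomial factors in $a_{2n+s}$ while simultaneously confining the non-periodic ``head'' of the path diagram to finitely many exceptional terms that land in $rg_{(a,b)}(q)$. Everything else is bookkeeping already performed in the proof of Theorem~\ref{thmDistPartsGeneral}.
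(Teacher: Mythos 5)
Your proposal is correct and takes essentially the same route as the paper: the symmetry of the rank-$2$ shape transition graph makes $A$, hence each block of $A^2$, real symmetric and diagonalizable, which forces $\alpha_{\cdots,\cdots}^{j}=0$ for $j>0$ in \eqref{eqCharacterize_a_mod_r}, and the even/odd dissection of Euler's product then yields exactly the displayed combinations with $A_i=\alpha_{0,i}$ and $B_i=\alpha_{1,i}$. Your extra observation that the blocks $BB^{\mathsf T}$ and $B^{\mathsf T}B$ are positive semidefinite (so the $\beta_i$ are in fact nonnegative) goes slightly beyond what the paper invokes, but it is correct and harmless.
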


It is possible to write the above theorem as
\begin{align}
\nonumber
	D_{(a, b)}(q) = rg_{(a,b)}(q) + \sum_{i = 1}^B \alpha_i ( -\beta_i q; q )_\infty
\end{align}
for certain $B \in \mathbb{Z}^{+}$, $\alpha_i$, $\beta_i$ $\in \mathbb{R}$,
$i = 1, 2$ \ldots, $B$,
but the statement emphasizes the nature of the proof.
It is Theorem \ref{thmDistPartsGeneral}, the observations following it,
and the dissection of series as done in~\cite[Section 3]{KO}.  

Although we are guaranteed diagonalization in the rank $r = 2$ case, 
a small computer search detects no repeated eigenvalues for levels $\ell \leq 500$.  

As another example, we calculate $D_{(4,0)}(q)$.
We order the shapes as $(0)$, $(2)$, $(4)$, $(1)$, $(3)$,
write the adjacency matrix of the corresponding shape transition graph,
and calculate its square.
\begin{align}
\nonumber
	A = \begin{blockarray}{cccccc}
	& (0) & (2) & (4) & (1) & (3) \\
		\begin{block}{c[ccc|cc]}
			(0) & 0 & 0 & 0 & 1 & 0 \\
			(2) & 0 & 0 & 0 & 1 & 1 \\
			(4) & 0 & 0 & 0 & 0 & 1 \\
			\cmidrule{2-6}
			(1) & 1 & 1 & 0 & 0 & 0 \\
			(3) & 0 & 1 & 1 & 0 & 0 \\
		\end{block}
	\end{blockarray}, \qquad
	A^2 = \begin{blockarray}{cccccc}
	& (0) & (2) & (4) & (1) & (3) \\
		\begin{block}{c[ccc|cc]}
			(0) & 1 & 1 & 0 & 0 & 0 \\
			(2) & 1 & 2 & 1 & 0 & 0 \\
			(4) & 0 & 1 & 1 & 0 & 0 \\
			\cmidrule{2-6}
			(1) & 0 & 0 & 0 & 2 & 1 \\
			(3) & 0 & 0 & 0 & 1 & 2 \\
		\end{block}
	\end{blockarray}.
\end{align}
The smaller block in $A^2$ has eigenvalues 3 and $-1$.
Even if it had repeated eigenvalues,
we know that $A^2$ is diagonalizable because it is symmetric.
Either way, both $a_{2n}$ and $a_{2n+1}$ are linear combinations of $3^n$ and $(-1)^n$.
Next, we calculate the number of paths starting at the zero slice
and ending in a specific slice.
We record these numbers on the path diagram,
being careful not to confuse it with representations of a cylindric partition,
please see Figure \ref{figPath_diag_r2_l4}.
\begin{figure}
  \centering
  \includegraphics[scale=0.1]{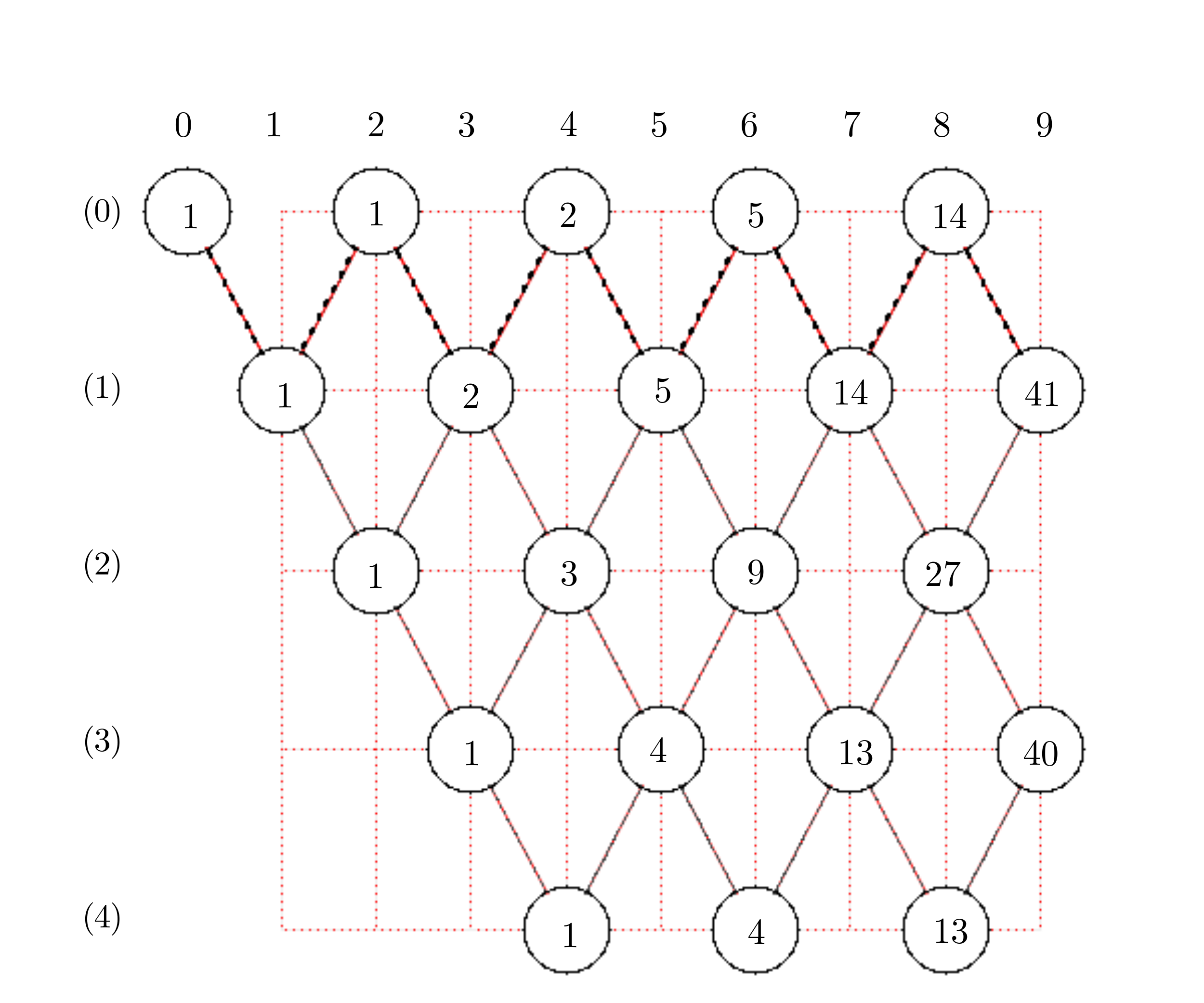}
  \caption{ Another example of the number of paths ending in each node.  }
\label{figPath_diag_r2_l4}
\end{figure}
Then, we see that $a_{2n} = 2\cdot3^{n-1}$ except for $a_0$,
and $a_{2n+1} = 3^n$.
It is possible, but not necessary to use Figure \ref{figPath_diag_r2_l4}
to find recurrences for $a_{2n}^\sigma$ or for $a_{2n+1}^\sigma$.
The rest is standard $q$-series manipulations.
\begin{align}
\nonumber
  D_{(4,0)}(q)
  = 1 + \sum_{n \geq 0} \frac{ 3^n q^{\binom{2n+2}{2}} }{(q; q)_{2n+1}}
  + 2 \sum_{n \geq 1} \frac{ 3^{n-1} q^{\binom{2n+1}{2}} }{(q; q)_{2n}}
\end{align}
\begin{align}
\nonumber
  = \frac{1}{3} + \frac{1}{\sqrt{3}}
    \sum_{n \geq 0} \frac{ (\sqrt{3})^{2n+1} q^{\binom{2n+2}{2}} }{(q; q)_{2n+1}}
  + \frac{2}{3} \left[ -1
    + \sum_{n \geq 0} \frac{ (\sqrt{3})^{2n} q^{\binom{2n+1}{2}} }{(q; q)_{2n}} \right]
\end{align}
\begin{align}
\nonumber
  = 1 + \frac{1}{\sqrt{3}} \left[ \frac{ ( \sqrt{3}q; q)_\infty - ( - \sqrt{3}q; q)_\infty }{2} \right]
  + \frac{2}{3} \left[ \frac{ ( \sqrt{3}q; q)_\infty + ( - \sqrt{3}q; q)_\infty }{2}  - 1 \right]
\end{align}
\begin{align}
\nonumber
  = \frac{1}{3}
  + \left( \frac{2 + \sqrt{3}}{6} \right) ( \sqrt{3}q; q)_\infty
  + \left( \frac{2 - \sqrt{3}}{6} \right) ( - \sqrt{3}q; q)_\infty
\end{align}

\section{Proof of part of a conjecture by Corteel, Dousse, and Uncu, and more}
\label{secCDU}

We revisit and relabel \eqref{PO_slices} for conformity with 
Corteel, Dousse, and Uncu~\cite{CDU}.  
\begin{align}
\label{PO_slices_again}
  {}_{1}\Sigma \geq {}_{2}\Sigma \geq \cdots \geq {}_{n}\Sigma > E 
\end{align}
These are the slices of a cylindric partition $\Lambda$ 
with profile $c = (c_1, c_2, \ldots, c_r)$
and $\mathrm{max}(\Lambda) = n$.  

Now, for each $j = 1, 2, \ldots, n$, in this order,
calculate 
\begin{align}
\nonumber 
  f_j := \mathrm{min}_{1 \leq i \leq r}
   \left\{ \ell({}_{j}\sigma^{(i)}) - \ell({}_{j+1}\sigma^{(i)}) \right\}.  
\end{align}
We take $E = {}_{n+1}\Sigma$ here.  
Then, in ${}_{1}\Sigma$, ${}_{2}\Sigma$, \ldots, ${}_{j}\Sigma$, 
delete $f_j$ 1's from the right ends of each ${}_{\cdots}\sigma^{(i)}$, 
$i = 1, 2, \ldots, r$.
We update $\Lambda$ accordingly, as 
$\Lambda = {}_{1}\Sigma + {}_{2}\Sigma + \cdots + {}_{n}\Sigma$.  
This transformation reduces $\vert \Lambda \vert$ 
(and hence $\vert {}_{1}\Sigma \vert+$ $\vert {}_{2}\Sigma \vert+$ $\cdots$ 
$+\vert {}_{n}\Sigma \vert$) by $r (j f_j)$.

In the end, \eqref{PO_slices_again} becomes
\begin{align}
\label{PO_slices_base}
  {}_{1}\Sigma \geq {}_{2}\Sigma \geq \cdots \geq {}_{n}\Sigma \geq E.
\end{align}
and a partition is produced on the side
into multiples of $r$ such that parts are at most $rn$.
The reader already noticed that
$(r \cdot 1 \cdot f_1, \ldots, r \cdot n \cdot f_n)$
is a partition into multiples of $r$, all $\leq rn$.
They are thus generated by $\frac{1}{(q^r; q^r)_n}$.

We can visualize this process as shrinking the slices as much as possible.
We keep the left ends, or the profile ends, fixed;
and we let the right ends, or the shape ends, move.
The profile, the shapes, and the inclusion are preserved.
Because we move the right ends as much as possible to the left,
we have
\begin{align}
\label{eqPbaseDiff}
  \vert {}_{j}\Sigma \vert - \vert {}_{j+1}\Sigma \vert
  = \Delta( c_{j+1}, c_j )
\end{align}
where $c_j$ is the shape of ${}_{j}\Sigma$,
for $j = 1, \ldots, n$.

As an example, consider the cylindric partition
\begin{align}
\nonumber
	\Lambda=((3,3,3,2,2,2,2,1,1,1,1,1), (3,3,3,2,2,2,1,1,1,1), (3,3,3,2,2,2,2,1,1,1,1))
\end{align}
with profile $c=(1,1,1)$.
The almost superimposed slices of $\Lambda$ are shown below.
Let us remember that the boxes in dark gray indicate the profile,
and they do not contribute to the weight.
\begin{align}
\label{fig_orig_slices}
 \vcenter{\hbox{\includegraphics[scale=1]{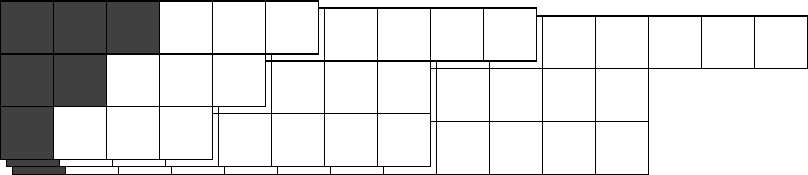}}}
\end{align}
Now, we take out the lightly shaded boxes, and record their numbers.
These numbers are 9+9+9, where 9 is for 3 slices $\times$ 3 rows,
and repetition three times for 3 columns.
The term column is used in a liberal sense, as they are askew because of the profile.
The slices become as the figure on the right as below.
\begin{center}
 \includegraphics[scale=1]{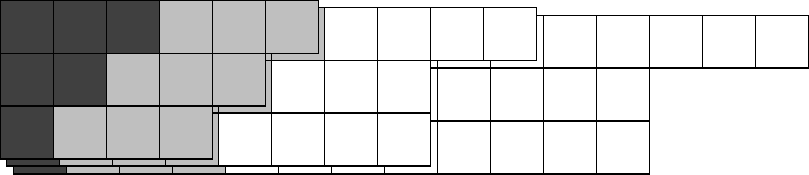}
 \raisebox{6mm}{$\longrightarrow$}
 \includegraphics[scale=1]{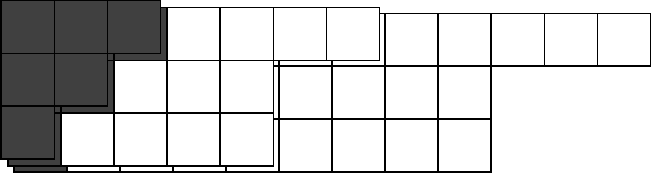}
\end{center}
The smallest slice cannot be shrunk anymore.
In fact, it incidentally became the zero slice.
Now we shrink the second smallest slice, along with the one larger than it.
Again, the lightly shaded boxes are to be taken out.
Their numbers this time are 6+6+6, for similar reasons as in the previous paragraph.
The intermediate stack of slices are shown in the next figure.
\begin{center}
 \includegraphics[scale=1]{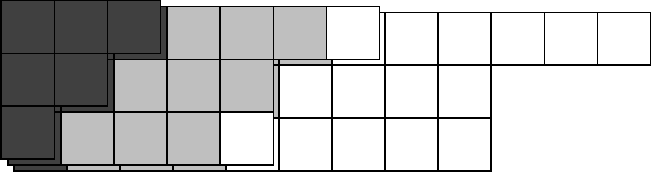}
 \raisebox{6mm}{$\longrightarrow$}
 \includegraphics[scale=1]{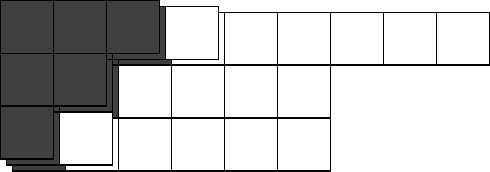}
\end{center}
Lastly, we shrink the largest slice,
and record 3+3+3+3.
\begin{align}
\label{fig_shrunk_slices_1}
 \vcenter{\hbox{\includegraphics[scale=1]{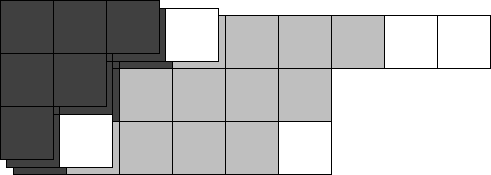}}}
 \longrightarrow
 \vcenter{\hbox{\includegraphics[scale=1]{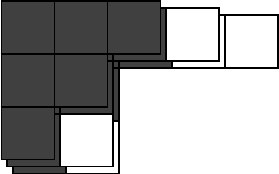}}}
\end{align}

Before concluding anything further,
we would like to draw the reader's attention to a detail here.
The requirement that $\mathrm{max}(\Lambda) = n$ for the original $\Lambda$ 
implies either $\mathrm{max}(\Lambda) = n$ for the final $\Lambda$ 
or $\mathrm{max}(\Lambda) < n$ and $f_n > 0$.  

With this proviso in mind, one can argue that the above process is reversible.  
Namely, given a partition into multiples of $r$ with parts at most $rn$,
and $n$ slices ${}_{1}\Sigma$, ${}_{2}\Sigma$, \ldots ${}_{n}\Sigma$ with profile $c$
satisfying \eqref{PO_slices_base} along with the conditions following it; 
it is possible to combine them and produce a cylindric partition $\Lambda$
of profile $c$ and $\mathrm{max}(\Lambda) = n$.  

There are two ways to make things simpler.  
One is starting with a weak final inequality 
instead of a strict one in \eqref{PO_slices_again}, 
and work with cylindric partitions with profile $c$
having parts at most $n$.  
Observe that the chain of inequalities \eqref{PO_slices_base} 
have extra conditions on the appearing slices.  

i.e. $f_{n, c}(q)$ is the generating function of 
chains of slices satisfying \eqref{PO_slices_again}, 
with the final inequality replaced by a weak one.  
Then, we have reproved the following theorem, borrowing notation from~\cite{CDU}.

\begin{theorem}
  \label{thmCDUconjPrep}
  Let $f_{n, c}(q)$ be the the generating function of cylindric partitions 
  with profile $c=(c_1, \ldots, c_r)$ and parts at most $n$.
  Then, 
  \begin{align}
  \nonumber
    f_{n, c}(q) = \frac{P_{n, c}(q)}{(q^r; q^r)_n},
  \end{align}
  where $P_{0, c}(q) = 1$, and 
  \begin{align}
  \nonumber 
    P_{n, c}(q) = \sum_{d} q^{n \Delta(d, c)} P_{n-1, d}(q), 
  \end{align}
  where the sum is over all possible shapes $d$.  
\end{theorem}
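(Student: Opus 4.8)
The plan is to read off both assertions of Theorem \ref{thmCDUconjPrep} from the shrinking construction of this section together with the weight identity \eqref{eqPbaseDiff}, by organizing the data around the \emph{bottom} of the slice chain. Throughout, the profile $c$ is fixed, and I freely use the level-$\ell$ identification between profiles and shapes set up in Section \ref{secPrelim}.

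First I would record that a cylindric partition with profile $c$ and parts at most $n$ is the same datum as a chain of slices ${}_1\Sigma \ge {}_2\Sigma \ge \cdots \ge {}_n\Sigma \ge E$ of length $n$ (repetitions allowed, bottom possibly equal to $E=E_c$): from $\Lambda$ take its slice decomposition and pad with copies of $E$ when $\max(\Lambda)<n$; conversely sum the slices. The shrinking process of this section then sends such a chain to a pair consisting of a chain of length $n$ satisfying \eqref{PO_slices_base} plus the tightness condition $\min_{1\le i\le r}\{\ell({}_j\sigma^{(i)})-\ell({}_{j+1}\sigma^{(i)})\}=0$ for every $j$ — call these the \emph{base chains} of length $n$ over $c$ — and an $n$-tuple $(f_1,\dots,f_n)$ ranging freely over $\mathbb{N}^n$, with $\vert\Lambda\vert$ equal to the total weight $\sum_{j=1}^n\vert{}_j\Sigma\vert$ of the base chain plus $r\sum_{j=1}^n j f_j$. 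The $n$-tuple contributes $\prod_{j=1}^n(1-q^{rj})^{-1}=(q^r;q^r)_n^{-1}$, so if we \emph{define} $P_{n,c}(q)$ to be the generating function, by total weight, of base chains of length $n$ over $c$ (a polynomial, there being only finitely many such chains), the reversibility of the shrinking process — this is exactly the content of the paragraphs above, the only delicate point being that passing from ``$\max(\Lambda)=n$'' to ``parts at most $n$'' makes the proviso there disappear — gives $f_{n,c}(q)=P_{n,c}(q)/(q^r;q^r)_n$. For $n=0$ the only base chain is the empty one, whence $P_{0,c}(q)=1$.

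For the recurrence I would encode a base chain ${}_1\Sigma\ge\cdots\ge{}_n\Sigma\ge E_c$ over $c$ by the sequence $(c_1,\dots,c_n)$ of shapes of its slices, setting $c_{n+1}:=c$ (the shape of $E_c$); the construction of base chains shows that such a chain is determined by this shape sequence and that its weight depends only on it. By \eqref{eqPbaseDiff}, $\vert{}_j\Sigma\vert=\sum_{k=j}^n\big(\vert{}_k\Sigma\vert-\vert{}_{k+1}\Sigma\vert\big)=\sum_{k=j}^n\Delta(c_{k+1},c_k)$, so the weight of the base chain is $\sum_{j=1}^n\vert{}_j\Sigma\vert=\sum_{k=1}^n k\,\Delta(c_{k+1},c_k)$; the coefficient $k$ records that the $k$-th layer ${}_k\Sigma\setminus{}_{k+1}\Sigma$ is contained in each of the $k$ larger slices ${}_1\Sigma\supseteq\cdots\supseteq{}_k\Sigma$. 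Now group base chains over $c$ according to the shape $d:=c_n$ of the bottom slice. For fixed $d$, deleting that slice — equivalently, re-reading ${}_n\Sigma$ as the empty cylindric partition of profile $d$ — is a bijection from base chains of length $n$ over $c$ with bottom shape $d$ onto base chains of length $n-1$ over $d$, and it removes precisely the $k=n$ summand, namely $n$ times the weight $\Delta(d,c)$ (cf.\ \eqref{defDeltaOp}) of the deleted bottom layer ${}_n\Sigma$ over $E_c$. Summing over all shapes $d$ gives $P_{n,c}(q)=\sum_d q^{\,n\Delta(d,c)}P_{n-1,d}(q)$, which is the claim.

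The main obstacle is not a new idea but airtight bookkeeping. One must keep in mind that base chains of length $n$ include the degenerate ones in which some ${}_j\Sigma$ already equal $E_c$ (these carry weight $0$ and are exactly what makes $P_{n,c}(1)=\binom{\ell+r-1}{r-1}^n$ and the base case $P_{0,c}=1$ fit together); that the shrinking map is invertible on the nose once one works with ``parts at most $n$'' rather than ``largest part equal to $n$''; and — the one genuinely substantive input, already established above — that in a base chain the gap $\vert{}_j\Sigma\vert-\vert{}_{j+1}\Sigma\vert$ depends only on the two shapes $c_j,c_{j+1}$, so that the layer weights, and hence the grouping by bottom shape, are well defined independently of the rest of the chain.
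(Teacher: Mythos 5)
Your construction is the same as the paper's: pad the slice chain to length $n$, shrink to a tight chain while splitting off the partition into multiples of $r$ that accounts for $1/(q^r;q^r)_n$, observe that a tight chain is determined by its shape sequence and has weight $\sum_{k=1}^n k\,\Delta(c_{k+1},c_k)$ with $c_{n+1}=c$, and then peel off the bottom slice. Up to that point the bookkeeping is correct. The gap is in the one sentence that produces the stated recurrence. By \eqref{eqPbaseDiff}, and by the definition preceding \eqref{defDeltaOp} (the first argument of $\Delta$ is the inner boundary, i.e.\ the shape of zero, and the second is the shape being attained), the $k=n$ summand you delete equals $n\,\Delta(c_{n+1},c_n)=n\,\Delta(c,d)$, that is, $n$ times the minimal weight of a slice of profile $c$ and shape $d$. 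You then call this quantity $n\,\Delta(d,c)$, and the swap is not harmless: $\Delta$ is not symmetric --- the paper's own example gives $\Delta((1,1,1),(0,0,2))=1$ but $\Delta((0,0,2),(1,1,1))=2$ --- so your final line contradicts the weight formula you derived one sentence earlier. What your argument actually proves is $P_{n,c}(q)=\sum_d q^{\,n\Delta(c,d)}P_{n-1,d}(q)$. That orientation is the one reproduced by the matrix \eqref{eqCDUConjPrepMxEx} (for instance its entry in row $(0,0)$, column $(1,0)$ is $q^{n}$, and indeed $\Delta\bigl((0,0),(1,0)\bigr)=1$ for these shapes, whereas $\Delta\bigl((1,0),(0,0)\bigr)=2$), and it matches the $\Delta(c,d)$ appearing in the parallel recurrence of Theorem \ref{thmQconjPrep}.

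So, as written, the proof does not establish the recurrence in the exact form of the statement, which has $q^{n\Delta(d,c)}$. To pass from the orientation you actually derived to the printed one you would need a genuine additional input --- for example the reflection that reverses the profile, which yields $P_{n,c}=P_{n,c^{\mathrm{rev}}}$ and interchanges the two orientations of $\Delta$ --- and your proposal neither states nor proves anything of that kind. Either supply such a symmetry argument, or keep the form $q^{n\Delta(c,d)}$ that your own computation yields and note that the printed $\Delta(d,c)$ sits uneasily with the paper's matrix example; but the interchange of the two arguments of $\Delta$ at the decisive step is a real error in the write-up, not a matter of notation.
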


It is convenient to write the formula in Theorem \ref{thmCDUconjPrep} in matrix form.
For instance, for rank $r = 3$ and level $\ell = 2$, one can calculate:
\begin{align}
\label{eqCDUConjPrepMxEx}
  \begin{bmatrix}
   P_{n, (0,0)}(q) \\ P_{n, (1,0)}(q) \\ P_{n, (1,1)}(q) \\
   P_{n, (2,0)}(q) \\ P_{n, (2,1)}(q) \\ P_{n, (2,2)}(q)
  \end{bmatrix}
  = \begin{bmatrix}
     1 & q^{n} & q^{n} & q^{2n} & q^{3n} & q^{4n} \\
     q^{2n} & 1 & q^{n} & q^{n} & q^{2n} & q^{3n} \\
     q^{n} & q^{2n} & 1 & q^{3n} & q^{n} & q^{2n} \\
     q^{4n} & q^{2n} & q^{3n} & 1 & q^{n} & q^{2n} \\
     q^{3n} & q^{n} & q^{2n} & q^{2n} & 1 & q^{n} \\
     q^{2n} & q^{3n} & q^{4n} & q^{4n} & q^{2n} & 1 \\
    \end{bmatrix}
  \begin{bmatrix}
   P_{n-1, (0,0)}(q) \\ P_{n-1, (1,0)}(q) \\ P_{n-1, (1,1)}(q) \\
   P_{n-1, (2,0)}(q) \\ P_{n-1, (2,1)}(q) \\ P_{n-1, (2,2)}(q)
  \end{bmatrix}.
\end{align}

\begin{cor}
\label{coroCDUconj}
  $P_{n, c}(q)$ as described above is a polynomial 
  with positive coefficients, 
  and $P_{n, c}(1) = \binom{\ell + r-1}{r-1}^n$,
  where $\ell$ is the level of the cylindric partitions, 
  and $r$ is the rank.
\end{cor}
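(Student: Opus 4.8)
The plan is to prove both assertions together by induction on $n$, using the recursion
\[
  P_{n, c}(q) = \sum_{d} q^{n \Delta(d, c)} P_{n-1, d}(q)
\]
from Theorem \ref{thmCDUconjPrep}, in which the sum ranges over \emph{all} possible shapes $d$ (equivalently, all possible profiles), a set of size $\binom{\ell + r - 1}{r - 1}$ as recorded in Section \ref{secPrelim}. Two elementary facts feed the induction. First, $\Delta(d, c) \geq 0$ for every pair of shapes: by the construction in Section \ref{secPrelim}, $\Delta(\sigma, \tau)$ is the \emph{weight} of an actual (minimal) cylindric partition, hence a non-negative integer, and $\Delta(c, c) = 0$. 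Secondly, the sum defining $P_{n, c}(q)$ is finite, so, starting from the polynomial $P_{0, c}(q) = 1$, each $P_{n, c}(q)$ is a genuine polynomial in $q$.

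For the positivity I would induct on $n$. The base case $P_{0, c}(q) = 1$ is clear. In the inductive step, each summand $q^{n \Delta(d, c)} P_{n-1, d}(q)$ is the product of the monomial $q^{n \Delta(d, c)}$ (coefficient $1$, exponent a non-negative integer by the first fact above) with a polynomial having non-negative coefficients, hence has non-negative coefficients; a finite sum of such polynomials again has non-negative coefficients. That $P_{n, c}(q)$ is nonzero follows because the $d = c$ term contributes $q^{n \Delta(c, c)} P_{n-1, c}(q) = P_{n-1, c}(q)$, which is nonzero by the inductive hypothesis. (Alternatively, positivity is transparent from the combinatorial reading of $P_{n, c}(q)$ established just before Theorem \ref{thmCDUconjPrep}: it is the generating function, by weight, for the finitely many chains of slices \eqref{PO_slices_base} of profile $c$ obeying the stated constraints, so it is a finite sum of monomials $q^{|\cdot|}$ each with coefficient $1$.)

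For the evaluation at $q = 1$ I would prove the slightly stronger statement that $P_{n, c}(1)$ is independent of $c$ and equals $\binom{\ell + r - 1}{r - 1}^n$. Setting $q = 1$ in the recursion collapses every monomial $q^{n \Delta(d, c)}$ to $1$, giving $P_{n, c}(1) = \sum_{d} P_{n-1, d}(1)$, the sum over all $\binom{\ell + r - 1}{r - 1}$ shapes $d$. The base case $P_{0, c}(1) = 1$ holds for every $c$. Assuming $P_{n-1, d}(1) = \binom{\ell + r - 1}{r - 1}^{n-1}$ for every shape $d$, this gives
\[
  P_{n, c}(1) = \sum_{d} \binom{\ell + r - 1}{r - 1}^{n - 1}
  = \binom{\ell + r - 1}{r - 1}\cdot\binom{\ell + r - 1}{r - 1}^{n - 1}
  = \binom{\ell + r - 1}{r - 1}^{n},
\]
again independently of $c$, closing the induction. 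In the matrix formulation of \eqref{eqCDUConjPrepMxEx}, this just says that specializing $q = 1$ turns the transition matrix into the all-ones matrix $J$ of size $m = \binom{\ell + r - 1}{r - 1}$; since $P_{0, d}(1) = 1$ for all $d$, the vector $\bigl(P_{n, d}(1)\bigr)_d$ equals $J^n$ applied to the all-ones vector, which is $m^n$ times the all-ones vector.

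This corollary is essentially bookkeeping on top of Theorem \ref{thmCDUconjPrep}, so I do not expect a genuine obstacle; the only points needing a moment's care are confirming $\Delta(d, c) \geq 0$, so that the recursion introduces only honest monomials and never negative powers or cancellation, and noting that the summation range in the recursion is the full set of shapes \emph{independently of} $c$ — it is precisely this $c$-independence that forces $P_{n, c}(1)$ to take a common value and hence a clean $n$-th power. Should one wish to bypass even the remark $\Delta \geq 0$, the combinatorial description of $P_{n, c}(q)$ as a slice-chain generating function makes polynomiality and positivity immediate, and the $q = 1$ specialization then counts those chains, which the recursion organizes into the stated product.
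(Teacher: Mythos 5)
Your proposal is correct and matches the paper's (implicit) justification: the corollary is read off from the recursion of Theorem \ref{thmCDUconjPrep} exactly as you do, with positivity coming from $\Delta(d,c)\geq 0$ (or from the combinatorial reading of $P_{n,c}(q)$ as a generating function for tightly packed chains of slices) and the evaluation $P_{n,c}(1)=\binom{\ell+r-1}{r-1}^n$ coming from the $q=1$ specialization, since the sum runs over all $\binom{\ell+r-1}{r-1}$ shapes independently of $c$. No substantive difference from the paper's route.
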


The other way to make things simpler is observing that 
${}_{n}\Sigma = E$ is possible only when the shape of ${}_{n}\Sigma$ 
is the shape of zero.  
We can eliminate that case by requiring 
\begin{align}
\nonumber 
  \ell({}_{n}\sigma^{(i)}) \geq 1
\end{align}
for at least one $i$ whenever ${}_{n}\Sigma$ has the shape of zero.
Then, only for the smallest slice,
and only when the smallest slice has the shape of zero,
do we have an additional column as a buffer.

$f_n$ will be one less than it would have been otherwise,
so we do not have the extra condition $f_n > 0$ anymore.
Also, \eqref{PO_slices_base} has ${}_{n}\Sigma > E$
instead of ${}_{n}\Sigma \geq E$,
along with the extra conditions listed afterwards.
Now we can talk about the cylindric partitions $\Lambda$
with $\mathrm{max}(\Lambda) = n$ instead of $\leq n$.

As an example, the cylindric partition
\begin{align}
\nonumber
	\Lambda=((3,3,3,2,2,2,2,1,1,1,1,1), (3,3,3,2,2,2,1,1,1,1), (3,3,3,2,2,2,2,1,1,1,1))
\end{align}
with profile $c=(1,1,1)$ will shrink to
\begin{align}
\label{fig_shrunk_slices_2}
 \vcenter{\hbox{\includegraphics[scale=1]{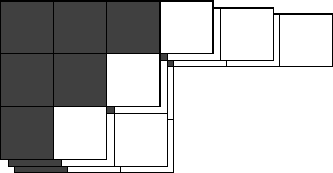}}},
\end{align}
and the partition on the side will be
$9+9+6+6+6+3+3+3+3$,
There is one less 9 than the previous example,
and the shrunk cylindric partition has the same maximum part, here 3.

\begin{theorem}
  \label{thmCDUconjPrepAlt}
  Let $f_{=n, c}(q)$ be the the generating function of cylindric partitions 
  with profile $c=(c_1, \ldots, c_r)$ and largest part $n$.
  Then, 
  \begin{align}
  \nonumber
    f_{=n, c}(q) = \frac{P_{=n, c}(q)}{(q^r; q^r)_n},
  \end{align}
  where $P_{=0, c}(q) = 1$, and 
  \begin{align}
  \nonumber 
    P_{=n, c}(q) = q^{nr}P_{n-1, c}(q) + \sum_{d \neq c} q^{n \Delta(d, c)} P_{n-1, d}(q),
  \end{align}
  where the sum is over all possible shapes $d$ except $c$, 
  and $P_{n, c}(q)$'s are as in Theorem \ref{thmCDUconjPrep}.  
\end{theorem}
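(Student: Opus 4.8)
The plan is to derive Theorem~\ref{thmCDUconjPrepAlt} from Theorem~\ref{thmCDUconjPrep} by a one-line inclusion--exclusion in the largest part, and then unwind the recursion for $P_{n,c}(q)$. First I would dispose of the base case: the only cylindric partition with profile $c$ and $\max(\Lambda)=0$ is the empty one $E$, so $f_{=0,c}(q)=1$ and hence $P_{=0,c}(q)=1$. For $n\ge 1$, a cylindric partition with profile $c$ has all parts at most $n$ exactly when $\max(\Lambda)\in\{0,1,\dots,n\}$; splitting the set of such partitions according to the value of $\max(\Lambda)$ gives
\[
  f_{n,c}(q)=\sum_{m=0}^{n}f_{=m,c}(q),\qquad\text{so}\qquad f_{=n,c}(q)=f_{n,c}(q)-f_{n-1,c}(q).
\]

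Next I would substitute $f_{n,c}(q)=P_{n,c}(q)/(q^{r};q^{r})_{n}$ from Theorem~\ref{thmCDUconjPrep} and clear denominators using $(q^{r};q^{r})_{n}=(1-q^{rn})(q^{r};q^{r})_{n-1}$, obtaining
\[
  f_{=n,c}(q)=\frac{P_{n,c}(q)-(1-q^{rn})\,P_{n-1,c}(q)}{(q^{r};q^{r})_{n}},
\]
so that $P_{=n,c}(q)=P_{n,c}(q)-P_{n-1,c}(q)+q^{rn}P_{n-1,c}(q)$. It then remains to isolate, in the recursion $P_{n,c}(q)=\sum_{d}q^{n\Delta(d,c)}P_{n-1,d}(q)$ of Theorem~\ref{thmCDUconjPrep}, the term $d=c$: since $\Delta(c,c)=0$ it is precisely $P_{n-1,c}(q)$, whence $P_{n,c}(q)-P_{n-1,c}(q)=\sum_{d\ne c}q^{n\Delta(d,c)}P_{n-1,d}(q)$; substituting this into the previous line gives exactly the claimed formula for $P_{=n,c}(q)$.

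A second, more combinatorial route --- likely the one intended, given the buffer-column discussion immediately preceding the statement --- mirrors the proof of Theorem~\ref{thmCDUconjPrep}. One shrinks the slices of $\Lambda$ as there, but now leaves a single buffer column in the smallest slice ${}_{n}\Sigma$ exactly when its shape is the shape of zero, so that ${}_{n}\Sigma>E$ always holds and $\max(\Lambda)=n$ is preserved. Peeling off ${}_{n}\Sigma$ by its shape $d$ splits the generating function in two: if $d\ne c$, the fully shrunk ${}_{n}\Sigma$ automatically exceeds $E$, has weight $\Delta(d,c)$ and lies inside all $n$ slices, so it contributes $q^{n\Delta(d,c)}$ while the remaining $n-1$ slices over a shape-$d$ base are counted by $P_{n-1,d}(q)$; if $d=c$, the buffer column puts an extra column of $r$ boxes into each of the $n$ slices containing ${}_{n}\Sigma$, contributing $q^{rn}$, and the remaining $n-1$ slices over the shape-$c$ base ${}_{n}\Sigma$ are counted by $P_{n-1,c}(q)$. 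Summing over $d$ reproduces the recursion.

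I do not anticipate a real obstacle: once Theorem~\ref{thmCDUconjPrep} is available, both arguments are essentially bookkeeping. The only point that genuinely needs care is the proviso flagged just before the statement --- that the buffer-column modification is a bijection onto cylindric partitions with $\max(\Lambda)=n$ \emph{exactly}, so that ${}_{n}\Sigma=E$ is excluded while the possibility $\max(\Lambda)<n$ is also ruled out; in the algebraic route the corresponding fine print is merely the degenerate check at $n=1$, where one uses $f_{0,c}(q)=1$.
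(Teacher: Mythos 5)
Your proposal is correct, and your second, combinatorial route is essentially the paper's own argument: the paper proves Theorem \ref{thmCDUconjPrepAlt} by rerunning the shrinking procedure of Theorem \ref{thmCDUconjPrep} while forbidding the smallest slice from collapsing to $E$, i.e.\ retaining one buffer column (of $r$ boxes, propagating weight $r$ through each of the $n$ slices, hence the factor $q^{nr}$) exactly when the smallest slice has the shape of zero, which corresponds to your case $d=c$. Your first route is genuinely different from what the paper does and is, if anything, cleaner: from $f_{=n,c}(q)=f_{n,c}(q)-f_{n-1,c}(q)$ (equivalently the $(1-z)$ factor in \eqref{eqClyPtnGenFuncWithPs}), the identity $(q^r;q^r)_n=(1-q^{rn})(q^r;q^r)_{n-1}$, and the single term $d=c$ of the recursion in Theorem \ref{thmCDUconjPrep} isolated via $\Delta(c,c)=0$, the stated recursion for $P_{=n,c}(q)$ drops out with no further combinatorics. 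What the algebraic route buys is brevity and independence from the fine print about $f_n$ and the buffer column; what the paper's (and your second) route buys is a term-by-term combinatorial interpretation of $q^{nr}P_{n-1,c}(q)$ versus $q^{n\Delta(d,c)}P_{n-1,d}(q)$, which is the picture the paper reuses afterwards when discussing tight packings of pivot slices and Lemma \ref{lemmaQConjPrep}. Both of your arguments are sound, and you correctly flagged the only delicate point, namely that the buffer-column modification biject{}s onto partitions with largest part exactly $n$ rather than at most $n$.
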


Corollary \ref{coroCDUconj} applies to $P_{=n, c}(q)$'s, as well.  
They are polynomials with positive coefficients and 
$P_{=n, c}(1) = \binom{\ell + r-1}{r-1}^n$,
where $\ell$ is the level of cylindrical partitions, and $r$ is the rank.

Again, it is convenient to write the Theorem \ref{thmCDUconjPrepAlt}
in matrix form if the rank and the level are fixed.
For example, for rank $r=3$ and level $\ell = 2$, we have:
\begin{align}
\nonumber
  \begin{bmatrix}
   P_{=n, (0,0)}(q) \\ P_{=n, (1,0)}(q) \\ P_{=n, (1,1)}(q) \\
   P_{=n, (2,0)}(q) \\ P_{=n, (2,1)}(q) \\ P_{=n, (2,2)}(q)
  \end{bmatrix}
  = \begin{bmatrix}
     q^{3n} & q^{n} & q^{n} & q^{2n} & q^{3n} & q^{4n} \\
     q^{2n} & q^{3n} & q^{n} & q^{n} & q^{2n} & q^{3n} \\
     q^{n} & q^{2n} & q^{3n} & q^{3n} & q^{n} & q^{2n} \\
     q^{4n} & q^{2n} & q^{3n} & q^{3n} & q^{n} & q^{2n} \\
     q^{3n} & q^{n} & q^{2n} & q^{2n} & q^{3n} & q^{n} \\
     q^{2n} & q^{3n} & q^{4n} & q^{4n} & q^{2n} & q^{3n} \\
    \end{bmatrix}
  \begin{bmatrix}
   P_{(n-1), (0,0)}(q) \\ P_{(n-1), (1,0)}(q) \\ P_{(n-1), (1,1)}(q) \\
   P_{(n-1), (2,0)}(q) \\ P_{(n-1), (2,1)}(q) \\ P_{(n-1), (2,2)}(q)
  \end{bmatrix}.
\end{align}
The only difference of the last displayed square matrix from that in \eqref{eqCDUConjPrepMxEx}
is that there are $q^{3n}$'s on the diagonal instead of $1$'s.
We also remind the reader of the three-way identity \eqref{eqClyPtnGenFuncWithPs}, 
which follows from the above discussion.  

In addition, Theorem \ref{thmCDUconjPrep} can then be restated as the following.
\begin{align}
\label{eqFuncEqAltToCorteelWelsh}
  F_c(z, q)
  = \frac{(1 - z)}{(1 - zq)} F_c(zq, q)
  + z \sum_d \frac{ (1 - z)q^{\Delta(d,c)} }{ (1 - zq^{\Delta(d,c)}) }
    F_d(zq^{\Delta(d,c)}, q),
\end{align}
where the sum is over all possible shapes for fixed rank $r$ and level $\ell$.  
This is not exactly the same as the functional equations 
found in~\cite{CW}, as it has a uniform number of generating functions in each instance.  

The above construction shows greater resemblance to
Tingley's bijection~\cite{Tingley, Tingley-Correction}.
But, Tingley strips off \emph{ribbons}, one at a time,
while the askew columns pertained to above are never ribbons.  
Also, the tight configuration Tingley arrives at appears to be tighter 
than our terminal configurations.  

Unlike tight packing of slices,
tight packing of pivot slices is not always uniquely possible.
We know that a slice $\Pi$ with shape $\sigma$, where $\sigma_1 \geq 2$,
is a potential pivot.
When there is enough space to the left and to the right of $\Pi$,
the last box $s$ used for tiling the space to the left
is strictly to the right of the first box $b$ used for tiling the space
to the right of $\Pi$.
\begin{center}
 \includegraphics[scale=0.05]{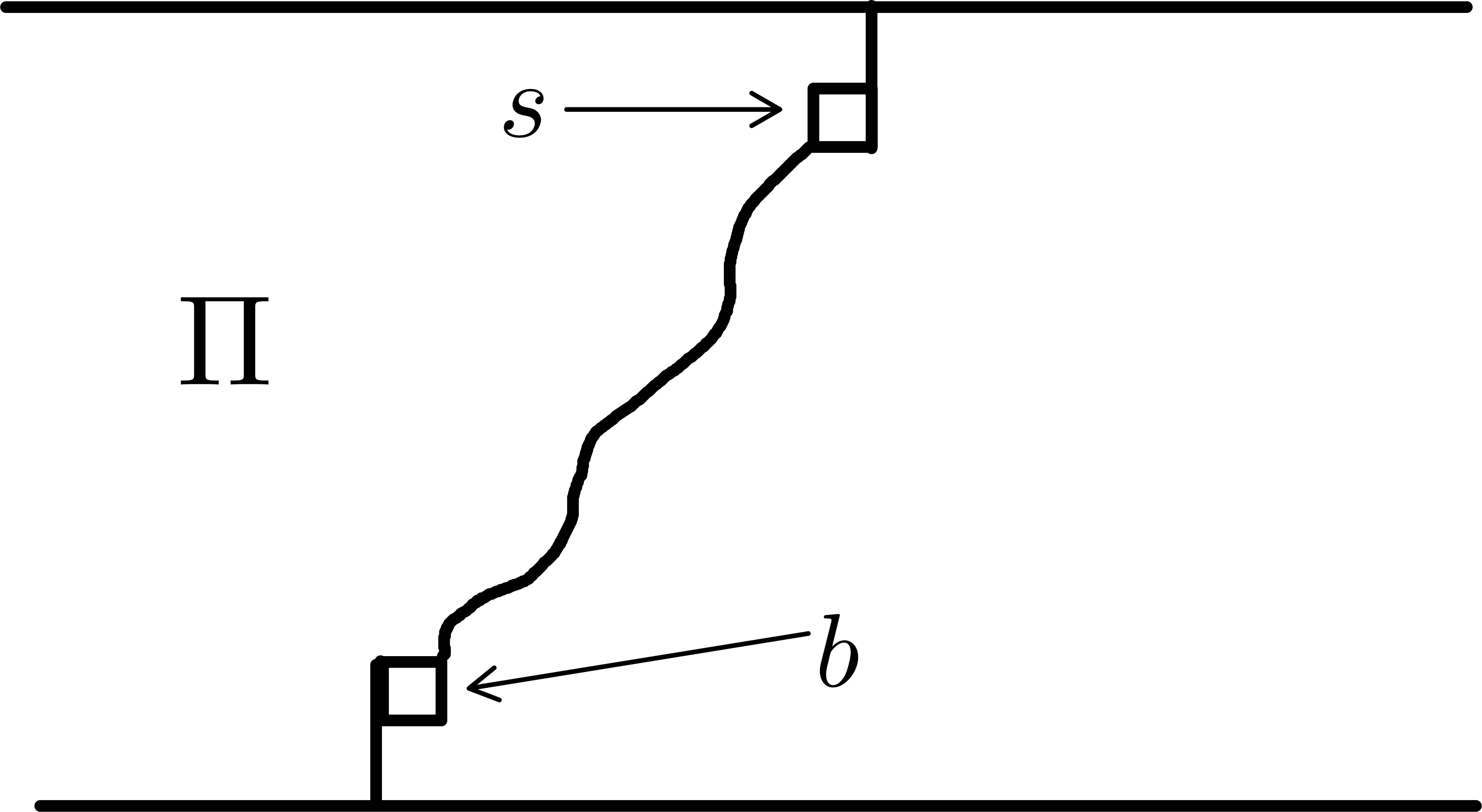}
\end{center}
However, when there are other potential pivots
to the right or to the left of $\Pi$, or both,
say ${}_r\Pi$ and ${}_l\Pi$,
with shapes ${}_r\sigma$ and ${}_l\sigma$, respectively,
the boxes $s$ and $b$ shown above may not be in the spaces
between ${}_l\Pi$ and $\Pi$, or $\Pi$ and ${}_r\Pi$ anymore.
\begin{center}
 \includegraphics[scale=0.05]{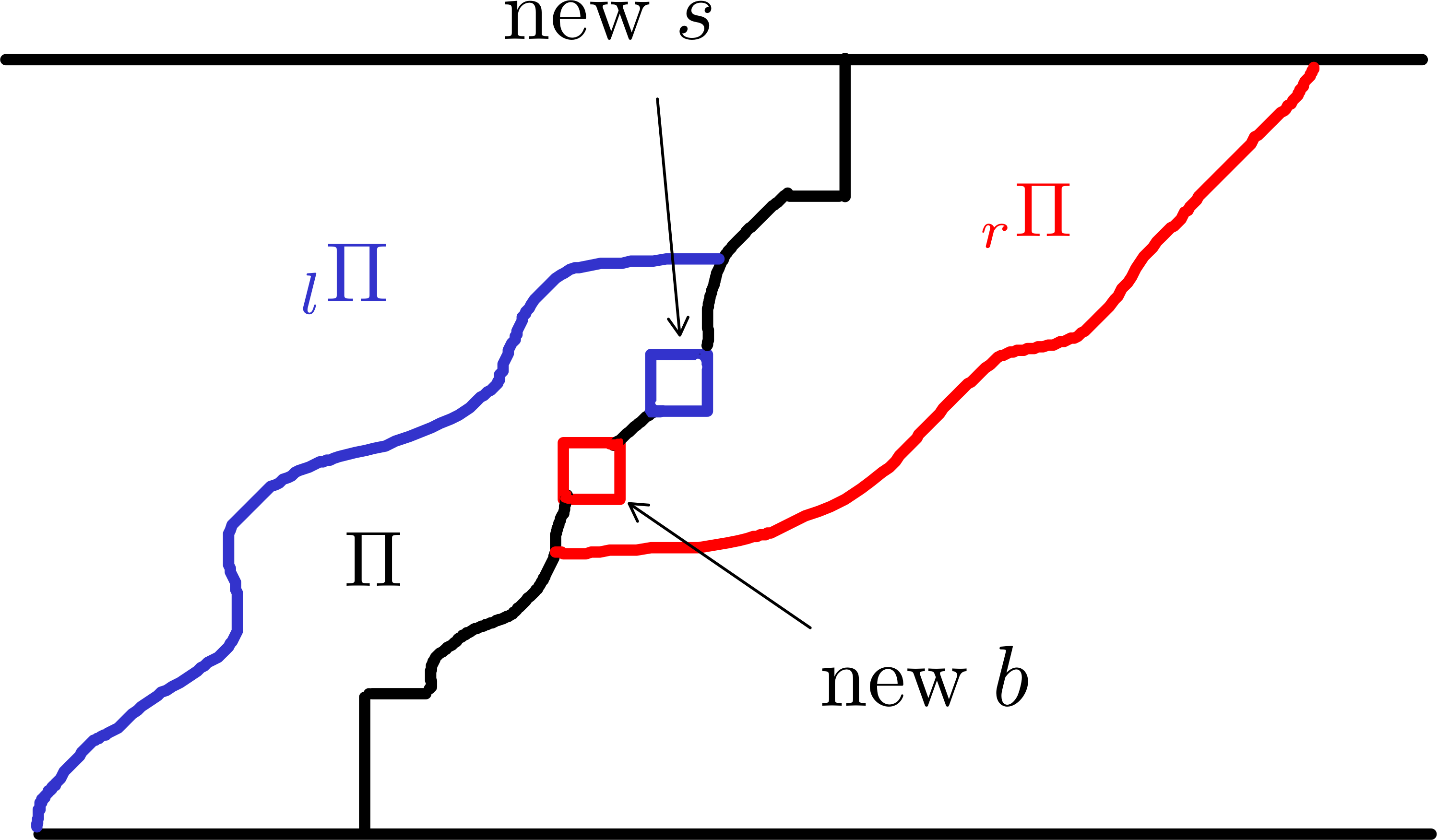}
\end{center}
If the new $s$ not strictly to the right of the new $b$,
then $\Pi$ may not be a pivot in this lineup anymore.
For similar reasons, ${}_l\Pi$ or ${}_r\Pi$ may not be pivots, either.
Please note that, in this case,
\begin{align}
\nonumber
  \left\vert \Pi \right\vert - \left\vert {}_l\Pi \right\vert = \Delta( {}_l\sigma, \sigma ),
  \quad \textrm{ and } \quad
  \left\vert {}_r\Pi \right\vert - \left\vert \Pi \right\vert = \Delta( \sigma, {}_r\sigma ),
\end{align}
necessarily.

But, we can expand $\Pi$ and ${}_r\Pi$ by one box in each row,
so that the old $s$ is restored.
We can instead expand only ${}_r\Pi$ by one box in each row,
so that the old $b$ is restored.
We may even need to do both of these operations.
\begin{center}
 \includegraphics[scale=0.05]{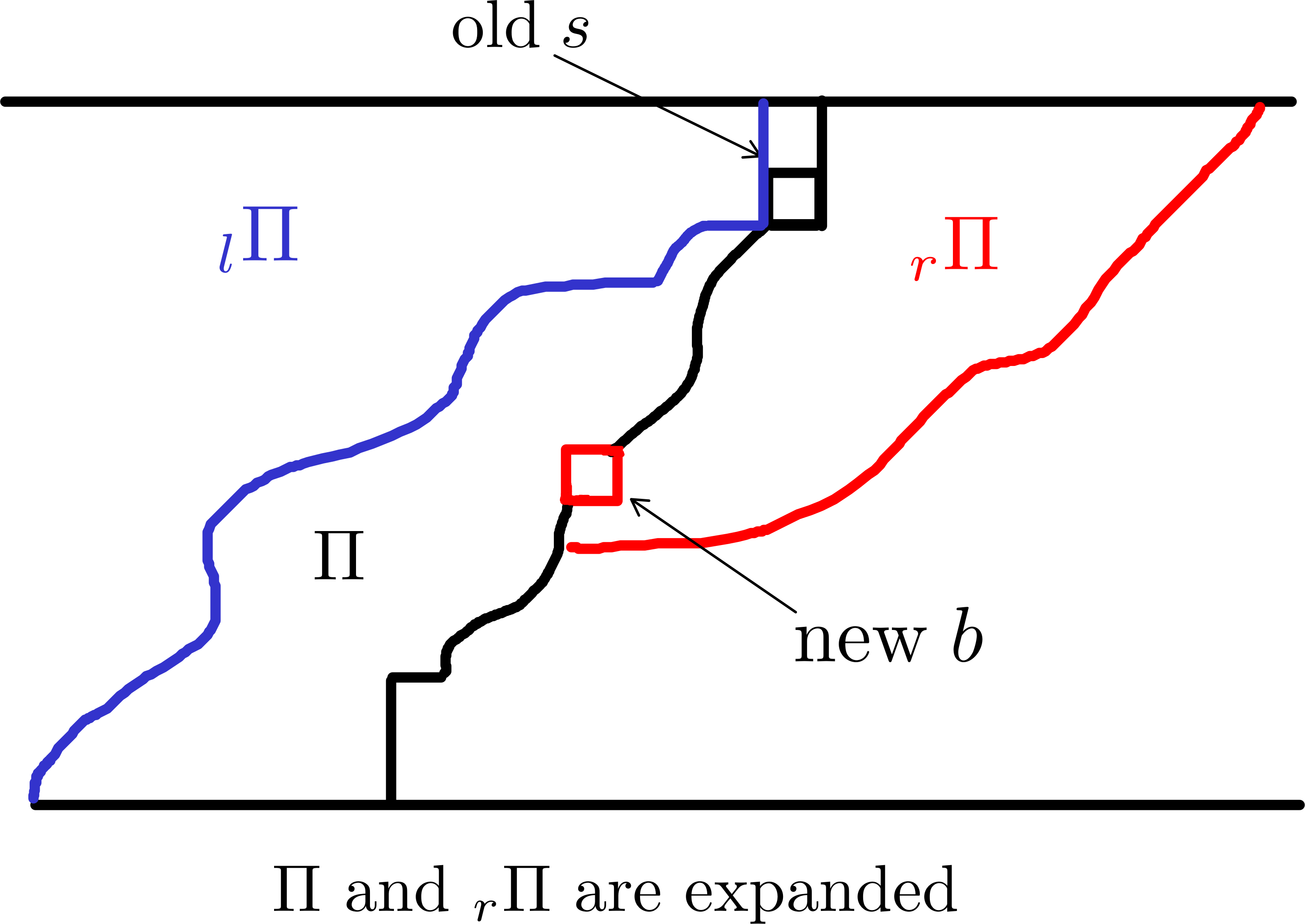}
 \hspace{15mm}
 \includegraphics[scale=0.05]{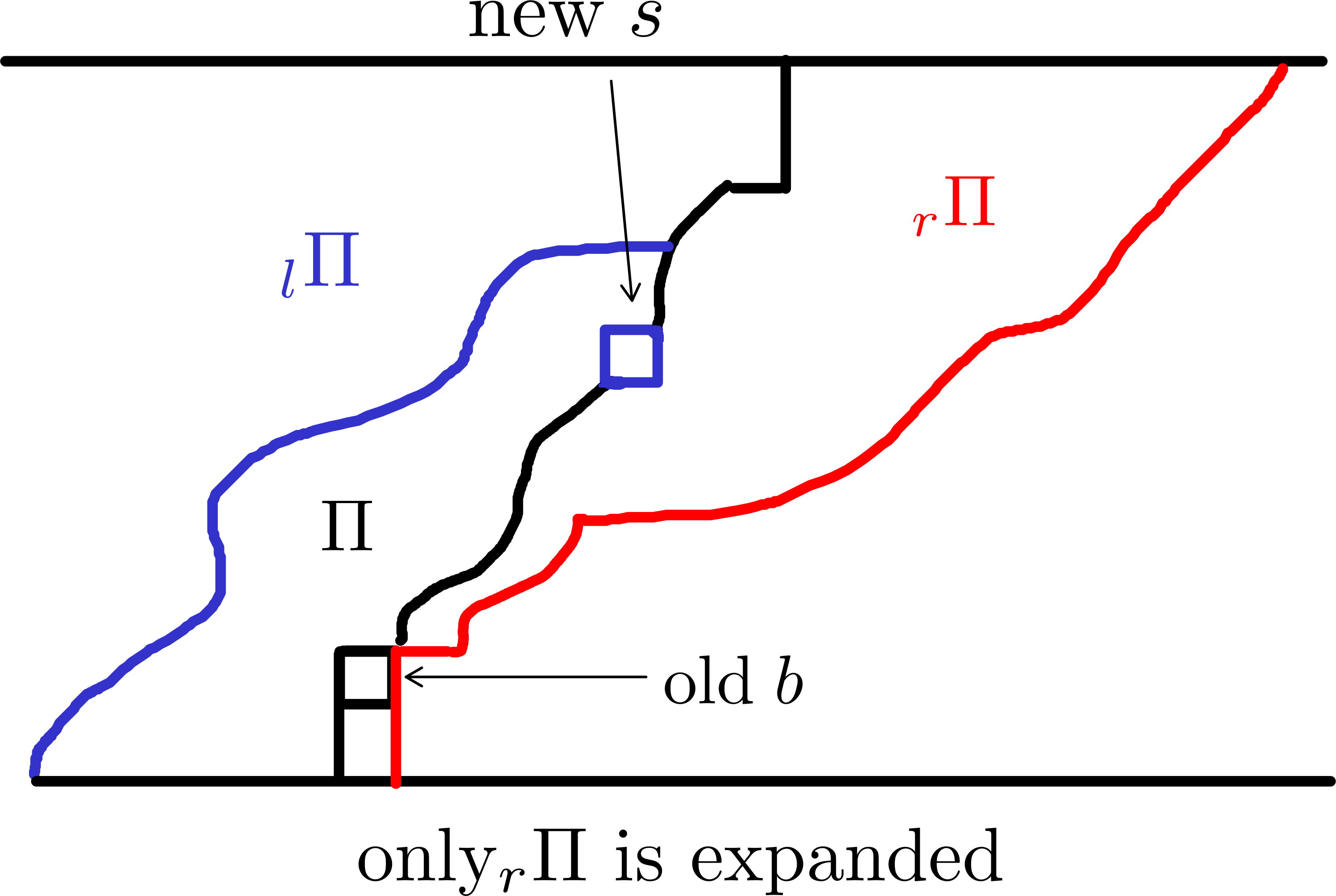}
\end{center}
For example, in profile $(1,1,0)$, all of the choices $(2, 0)$, $(2, 1)$, $(2, 2)$
as shape $\sigma$ are potential pivots.
But the tight packing as in Theorem \ref{thmCDUconjPrep}
will result in $1^{(2, 0)}$, $2^{(2, 1)}$, $3^{(2, 2)}$,
where the middle $2^{(2, 1)}$ is not a pivot.
The boxes for $2^{(2, 1)}$ are shown, as well.
\begin{center}
 \includegraphics[scale=0.6]{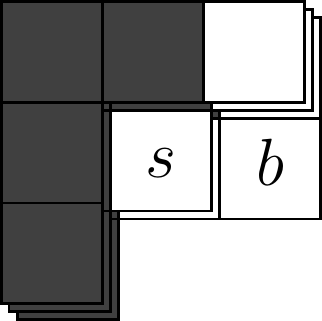}
\end{center}

If we expand $2^{(2, 1)}$ and $3^{(2, 2)}$, we get
$1^{(2, 0)}$, $5^{(2, 1)}$, $6^{(2, 2)}$,
where the $5^{(2, 1)}$ is still not a pivot.
\begin{center}
 \includegraphics[scale=0.6]{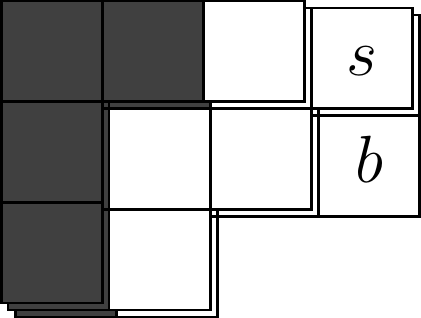}
\end{center}

If we expanded $3^{(2, 2)}$ only,
we'd have $1^{(2, 0)}$, $2^{(2, 1)}$, $6^{(2, 2)}$,
where the $2^{(2, 1)}$ is still not a pivot.
\begin{center}
 \includegraphics[scale=0.6]{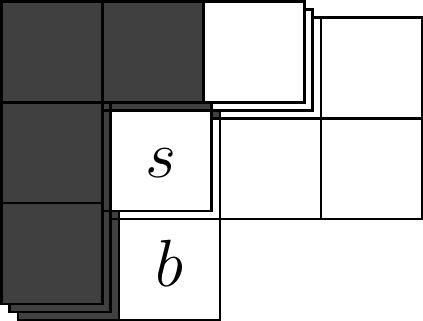}
\end{center}

Instead, if we expand $2^{(2, 1)}$ and $3^{(2, 2)}$,
and then $6^{(2, 2)}$ again, we would end up with
$1^{(2, 0)}$, $5^{(2, 1)}$, $9^{(2, 2)}$,
where $5^{(2, 1)}$ is a pivot.
In fact, all slices in this lineup are pivots.
\begin{center}
 \includegraphics[scale=0.6]{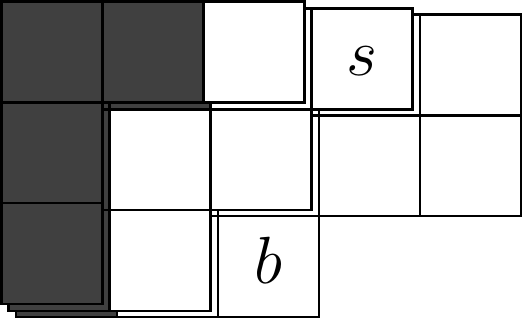}
\end{center}

We welcome the reader to verify that the slice $1^{(2, 1)}$ by itself
is not a pivot in the profile $(0, 2, 0)$,
but its once expanded version $4^{(2, 1)}$ is.

The next example is in profile $(4, 0, 0)$.
The potential pivots from smallest to largest
will have shapes $\sigma = $ $(4, 1)$, $(4, 2)$, and $(4, 3)$ this time.
Per
\begin{align}
\nonumber
  \Delta( (0, 0), (4, 1) ) = 5,
  \quad
  \Delta( (4, 1), (4, 2) ) = 1,
  \quad \textrm{ and } \quad
  \Delta( (4, 2), (4, 3) ) = 1,
\end{align}
the tight configuration in the sense of Theorem \ref{thmCDUconjPrep} will be
$5^{(4, 1)}$, $6^{(4, 2)}$, $7^{(4, 3)}$,
Here, $6^{(4, 2)}$ is not a pivot, but the others are.
The boxes $s$ and $b$ are shown for $6^{(4, 2)}$.
\begin{center}
 \includegraphics[scale=0.6]{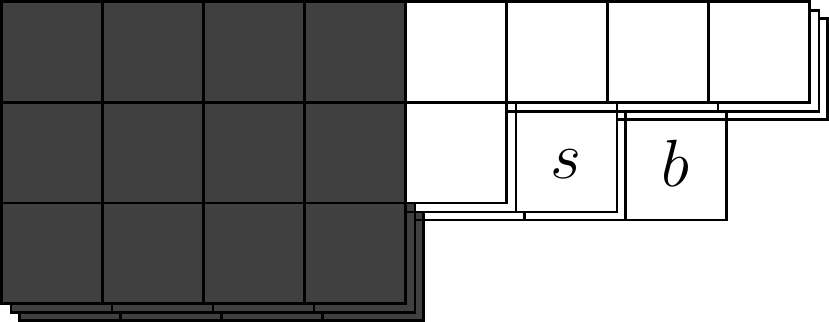}
\end{center}

In either $5^{(4, 1)}$, $9^{(4, 2)}$, $10^{(4, 3)}$
or $5^{(4, 1)}$, $6^{(4, 2)}$, $10^{(4, 3)}$,
all slices are pivots.
\begin{center}
 \includegraphics[scale=0.6]{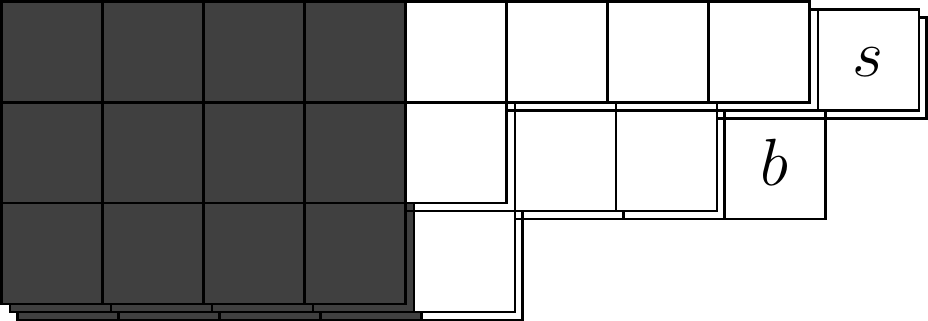}
 \raisebox{6mm}{, or}
 \includegraphics[scale=0.6]{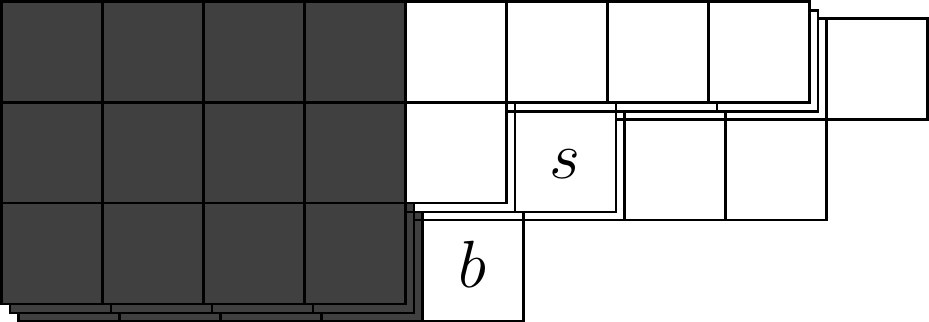}
\end{center}

Let's summarize the observations and examples above as follows.
We consider
\begin{align}
\label{ineq_QConjPrep_lineup0}
  {}_{1}\Pi > {}_{2}\Pi > \cdots > {}_{n}\Pi > {}_{n+1}\Pi = E,
\end{align}
where the slices have shapes ${}_{1}\sigma$, ${}_{2}\sigma$, \ldots, ${}_{n}\sigma$, ${}_{n+1}\sigma$,
in their respective order.
${}_{j}\sigma_1 \geq 2$ for $j = $ 1, 2, \ldots, $n$, but not necessarily $j = n+1$.
In other words, ${}_j\Pi$ for $j = $ 1, 2, \ldots, $n$ are potential pivots.
If the inequality
\begin{align}
\label{ineq_QConjPrep_star}
  \left\vert {}_j\Pi \right\vert - \left\vert {}_{j+1}\Pi \right\vert
  \geq \Delta( {}_{j+1}\sigma, {}_{j}\sigma ) + r
\end{align}
is satisfied for all $j = $ 1, 2, \ldots, $n$,
then ${}_{j}\Pi$'s for $j = $ 1, 2, \ldots, $n$ are pivots.
If
\begin{align}
\label{eq_QConjPrep_doubleStar}
  \left\vert {}_j\Pi \right\vert - \left\vert {}_{j+1}\Pi \right\vert
  = \Delta( {}_{j+1}\sigma, {}_{j}\sigma )
\end{align}
for some $j$, $1 \leq$ $j$ $\leq n$,
then ${}_{j}\Pi$, and if $j < n$ ${}_{j+1}\Pi$, too,
must be separately checked to see if they are still pivots.

If \eqref{ineq_QConjPrep_star} is satisfied for all $j = $ 1, 2, \ldots, $n$,
then the lineup \eqref{ineq_QConjPrep_lineup0} is called a
\emph{loose lineup}.
In addition, if the equality is attained in \eqref{ineq_QConjPrep_star}
for all $j = $ 1, 2, \ldots, $n$,
then \eqref{ineq_QConjPrep_lineup0} is called a \emph{minimal loose lineup}.
In fact, if we fix the shapes ${}_{1}\sigma$, ${}_{2}\sigma$, \ldots,
${}_{n}\sigma$, then the minimal loose lineup is unique.
If \eqref{eq_QConjPrep_doubleStar} holds for some $j$, $1 \leq $ $j$, $\leq n$,
and ${}_{j}\Pi$'s are still pivot slices for all $j = $ 1, 2, \ldots, $n$,
then \eqref{ineq_QConjPrep_lineup0} is called a \emph{jammed lineup}.
If \eqref{eq_QConjPrep_doubleStar} holds for some $j$, $1 \leq $ $j$, $\leq n$,
and the equality is attained in \eqref{ineq_QConjPrep_star} for the remaining $j$'s,
and ${}_{j}\Pi$'s are still pivot slices for all $j = $ 1, 2, \ldots, $n$,
then \eqref{ineq_QConjPrep_lineup0} is called a \emph{minimal jammed lineup}.
Even if one fixes the shapes, minimal jammed lineups are not unique.
$1^{(2, 0)}$, $5^{(2, 1)}$, $9^{(2, 2)}$ for profile $(1, 1, 0)$,
and $5^{(4, 1)}$, $9^{(4, 2)}$, $10^{(4, 3)}$, and
$5^{(4, 1)}$, $6^{(4, 2)}$, $10^{(4, 3)}$ for profile $(4, 0, 0)$
are all jammed lineups.
In fact, all are minimal jammed lineups.
We invite the reader to verify that $4^{(2, 0)}$, $8^{(2, 1)}$, $12^{(2, 2)}$ in profile $(1, 1, 0)$
is the minimal loose lineup for potential pivot shapes $(2, 0)$, $(2, 1)$, $(2, 2)$
from the smallest to largest slices,
and $4^{(2, 0)}$, $11^{(2, 1)}$, $15^{(2, 2)}$ in profile $(1, 1, 0)$
is a loose lineup which is not minimal.

If \eqref{ineq_QConjPrep_lineup0} is a jammed lineup,
let
\begin{align}
\nonumber
  \Lambda = {}_{1}\Pi + {}_{2}\Pi + \cdots + {}_{n}\Pi,
\end{align}
and set
\begin{align}
\nonumber
  \iota(\Lambda) = \textrm{the collection of }
    j\textrm{'s for which \eqref{eq_QConjPrep_doubleStar} is satisfied.  }
\end{align}
For instance,
\begin{align}
\nonumber
  & \iota( 5^{(4, 1)} + 9^{(4, 2)} + 10^{(4, 3)} ) = \{ 1, 3 \} \\
\nonumber
  & \iota( 5^{(4, 1)} + 6^{(4, 2)} + 10^{(4, 3)} ) = \{ 2, 3 \}
  \textrm{ (profile } (4, 0, 0) \textrm{ )}, \\
\nonumber
  & \iota( 1^{(2, 0)} + 5^{(2, 1)} + 9^{(2, 2)} ) = \{ 3 \}
  \textrm{ (profile } (1, 1, 0) \textrm{ )}.
\end{align}
\begin{align}
\nonumber
  \iota( 4^{(2, 0)} + 8^{(2, 1)} + 12^{(2, 2)} ) = \varnothing
\end{align}
because $4^{(2, 0)}$, $8^{(2, 1)}$, $12^{(2, 2)}$ is not a jammed lineup
in profile $(1, 1, 0)$.

For fixed profile $c$ and number of pivot slices $n$,
the collection of loose lineups will be denoted by $\mathcal{LL}_{n, c}$.
In the same setting, we let $\mathcal{MLL}_{n, c}$, $\mathcal{JL}_{n, c}$, $\mathcal{MJL}_{n, c}$
denote the set of minimal loose lineups, jammed lineups, and minimal jammed lineups, respectively.
We will later show that
\begin{align}
\nonumber
  \left\vert \mathcal{MLL}_{n, c} \right\vert < \infty,
  \textrm{ and }
  \left\vert \mathcal{MJL}_{n, c} \right\vert < \infty
\end{align}
for any $c$ and $n$.

\begin{lemma}
\label{lemmaQConjPrep}
  For integer $n \geq 1$ and fixed profile $c = (c_1, \ldots, c_r)$,
  \begin{align}
  \nonumber
    \sum_{ {}_{1}\Pi > {}_{2}\Pi > \cdots > {}_{n}\Pi > E}
      q^{ \vert {}_{1}\Pi \vert + \vert {}_{2}\Pi \vert + \cdots + \vert {}_{n}\Pi \vert }
    & = \sum_{ \Lambda \in \mathcal{MLL}_{n, c} }
        \frac{ q^{ \vert \Lambda \vert } }{ (q^r; q^r)_n }
    + \sum_{ \Lambda \in \mathcal{MJL}_{n, c} }
        \frac{ q^{ \vert \Lambda \vert } }{
          \displaystyle \prod_{ \substack{j = 1 \\ j \not\in \iota(\Lambda)} }^n (1 - q^{jr} )  } \\
  \nonumber
    & = \frac{ \displaystyle \sum_{ \Lambda \in \mathcal{MLL}_{n, c} } q^{ \vert \Lambda \vert }
            + \sum_{ \Lambda \in \mathcal{MJL}_{n, c} }
              \left( \prod_{j \in \iota(\Lambda)} (1 - q^{jr}) \right)
                q^{ \vert \Lambda \vert }
        }{ (q^r; q^r)_n },
  \end{align}
  where the first sum is over all chains of $n$ pivots.
\end{lemma}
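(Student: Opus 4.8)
The plan is to split the collection of chains of $n$ pivots into the loose lineups and the jammed lineups, observe that this is a disjoint union exhausting all such chains, and then in each class collapse the free "slack" to pass to a minimal representative. The first step is an arithmetic observation: along a chain ${}_{1}\Pi > {}_{2}\Pi > \cdots > {}_{n}\Pi > {}_{n+1}\Pi = E$ of potential pivots, with shapes ${}_{1}\sigma, \ldots, {}_{n}\sigma, {}_{n+1}\sigma$, each weight gap satisfies $|{}_{j}\Pi| - |{}_{j+1}\Pi| \geq \Delta({}_{j+1}\sigma, {}_{j}\sigma)$ (that is the defining property of $\Delta$), and, because the path diagram is eventually periodic with period $r$ so that a fixed shape occupies a fixed residue class of weights modulo $r$, also $|{}_{j}\Pi| - |{}_{j+1}\Pi| \equiv \Delta({}_{j+1}\sigma, {}_{j}\sigma) \pmod r$. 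Hence every gap equals $\Delta({}_{j+1}\sigma, {}_{j}\sigma) + r\,m_j$ for a unique $m_j \in \mathbb{Z}_{\geq 0}$. If $m_j \geq 1$ for all $j$ then \eqref{ineq_QConjPrep_star} holds for all $j$, so the chain is a loose lineup and every ${}_{j}\Pi$ is automatically a pivot; if $m_j = 0$ for some $j$, then (since along a chain of $n$ pivots all of ${}_{1}\Pi, \ldots, {}_{n}\Pi$ are pivots by hypothesis) \eqref{eq_QConjPrep_doubleStar} makes it a jammed lineup with $\iota(\Lambda) = \{\,j : m_j = 0\,\}$. These cases are mutually exclusive, so the first sum in the Lemma splits as a sum over loose lineups plus a sum over jammed lineups.

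For the loose lineups, fix the shape sequence ${}_{1}\sigma, \ldots, {}_{n}\sigma$. Taking $m_1 = \cdots = m_n = 1$ yields the (unique, as noted in the text) minimal loose lineup $\Lambda_0$ with these shapes. An arbitrary loose lineup with the same shapes is obtained from $\Lambda_0$ by, for each $j$, prepending $e_j := m_j - 1 \geq 0$ full columns of height $r$ at the profile end of the $j$ largest slices ${}_{1}\Pi, \ldots, {}_{j}\Pi$; here one uses that a slice is determined by its shape and weight, and that raising a slice's weight by $r$ is exactly this "prepend a full column" operation. Prepending at level $j$ adds the same column to ${}_{j-1}\Pi$ and ${}_{j}\Pi$, hence changes only the $j$-th gap, so the $e_j$'s range freely over $\mathbb{Z}_{\geq 0}$ and $|\Lambda| = |\Lambda_0| + r\sum_{j=1}^n j\,e_j$. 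Summing $q^{|\Lambda|}$ over $(e_1, \ldots, e_n)$ gives $q^{|\Lambda_0|}\prod_{j=1}^n (1-q^{jr})^{-1} = q^{|\Lambda_0|}/(q^r;q^r)_n$, and summing over shape sequences, i.e.\ over $\Lambda_0 \in \mathcal{MLL}_{n,c}$, produces $\sum_{\Lambda \in \mathcal{MLL}_{n,c}} q^{|\Lambda|}/(q^r;q^r)_n$.

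The jammed lineups are treated the same way, except that the positions $j \in \iota(\Lambda)$ are frozen at $m_j = 0$: for fixed shapes and fixed $I = \iota(\Lambda)$, the minimal configuration has gap $\Delta({}_{j+1}\sigma,{}_{j}\sigma)$ for $j \in I$ and $\Delta({}_{j+1}\sigma,{}_{j}\sigma) + r$ for $j \notin I$, and a general jammed lineup with this data is recovered by prepending $e_j \geq 0$ columns only at the positions $j \notin I$, so the jammed contribution becomes $\sum_{\Lambda \in \mathcal{MJL}_{n,c}} q^{|\Lambda|}\prod_{1 \leq j \leq n,\ j \notin \iota(\Lambda)}(1-q^{jr})^{-1}$. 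The step I expect to be the main obstacle is verifying that this minimal configuration really is a chain of $n$ pivots whenever some chain of $n$ pivots realizes the pair $(\text{shapes}, I)$ at all — equivalently, that collapsing the free slack of a jammed lineup to $m_j = 1$ for every $j \notin I$ never destroys the pivot property of any slice. (The reverse, prepending columns at levels $j \notin I$, can only enlarge the "space before" a slice and cannot create a new tight gap, so it preserves pivothood and keeps $\iota$ fixed; that direction is routine.) This has to be argued from the skew-diagram picture of the spaces just before and just after a slice: only the space before ${}_{j}\Pi$ changes when level $j$ is collapsed, and one must check that, with $m_j \geq 1$ of slack remaining, ${}_{j}\Pi$ still witnesses the pivot condition — exactly the delicate phenomenon the examples in profiles $(1,1,0)$ and $(4,0,0)$ are designed to expose, and the reason one is forced to sum over all of $\mathcal{MJL}_{n,c}$ rather than a single minimal jammed lineup per shape sequence.

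Finally, adding the loose and jammed contributions gives the first displayed equality. For the second, clear denominators: in each jammed summand multiply numerator and denominator by $\prod_{j \in \iota(\Lambda)}(1-q^{jr})$, rewriting $q^{|\Lambda|}/\prod_{j \notin \iota(\Lambda)}(1-q^{jr})$ as $\big(\prod_{j \in \iota(\Lambda)}(1-q^{jr})\big)\,q^{|\Lambda|}/(q^r;q^r)_n$; now every term sits over the common denominator $(q^r;q^r)_n$ and the two sums combine into the stated numerator. (Finiteness of $\mathcal{MLL}_{n,c}$ and $\mathcal{MJL}_{n,c}$, promised in the text, is what makes these genuine finite expressions, but it is not needed for the identity, only for its interpretation.)
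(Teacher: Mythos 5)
Your proposal follows essentially the same route as the paper's proof: split the chains of $n$ pivots into loose and jammed lineups, shrink each to a minimal representative while recording the removed full columns (each worth $jr$ when levels $1,\ldots,j$ are affected) as a partition into multiples of $r$ with parts $jr$, $j \in \iota(\Lambda)$, forbidden in the jammed case, and then place both contributions over the common denominator $(q^r;q^r)_n$. The step you single out as the main obstacle --- that collapsing the slack of a jammed lineup to exactly $\Delta({}_{j+1}\sigma,{}_{j}\sigma)+r$ at the positions outside $\iota(\Lambda)$ leaves every slice a pivot --- is precisely the point the paper itself disposes of with ``the process is reversible,'' so your write-up matches the paper in both structure and level of detail, and your explicit mod-$r$ congruence argument showing that loose and jammed lineups exhaust all chains of $n$ pivots is a useful addition the paper leaves implicit.
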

We must emphasize that the first sum in Lemma \ref{lemmaQConjPrep}
is over actual, not potential, pivots.

\begin{proof}
 We will imitate the proof of Theorem \ref{thmCDUconjPrep}.
 For \eqref{ineq_QConjPrep_lineup0} a loose lineup,
 we can shrink ${}_{j}\Pi$'s so that
 \begin{align}
 \nonumber
  \left\vert {}_{j}\Pi \right\vert - \left\vert {}_{j+1}\Pi \right\vert
  = \Delta( {}_{j+1}\sigma, {}_{j}\sigma ) + r,
 \end{align}
 and collect the number of removed boxes
 as a partition into multiples of $r$, all at most $r \cdot n$.
 The process is clearly reversible.
 This gives us a first sums in the second and the third slots.
 For a jammed lineup, we want to keep
 \begin{align}
 \nonumber
  \left\vert {}_{j}\Pi \right\vert - \left\vert {}_{j+1}\Pi \right\vert
  = \Delta( {}_{j+1}\sigma, {}_{j}\sigma )
 \end{align}
 at all times for the indices $j$ satisfying \eqref{eq_QConjPrep_doubleStar}.
 For any such index $j$,
 ${}_{1}\Pi$, ${}_{2}\Pi$, \ldots, ${}_{j}\Pi$ must not be expanded
 while leaving ${}_{j+1}\Pi$ as is.
 In other words, the auxiliary partition accounted for
 by $\frac{1}{(q^r; q^r)_n}$ must not have any $j \cdot r$'s in it.
 Likewise, the process is reversible,
 i.e. given a minimal jammed lineup and a partition into
 multiples of $r$'s, all at most $r \cdot n$,
 having no parts $j \cdot r$ that would ``loose the jammed pivots'',
 we can obtain a jammed lineup by expanding the pivot slices accordingly.
 This explains the second sums in the second and the third slots.
\end{proof}

For profile $c = (c_1, \ldots, c_r)$ with rank $r$ and level $\ell$,
there are $\binom{\ell + r - 1}{r - 1}$ possible shapes.
Among these, $r$ of them,
in particular $(0, \ldots, 0)$, $(1, 0, \ldots, 0)$, $(1, 1, 0, \ldots, 0)$,
\ldots, $(1, 1, \ldots, 1)$ cannot be pivots,
but the remaining $\binom{\ell + r - 1}{r - 1} - r$ are potential pivots.
Thus, there are $\left( \binom{\ell + r - 1}{r - 1} - r \right)^n$
possibilities for \eqref{ineq_QConjPrep_lineup0} to be a minimal loose lineup.
If we try to shrink some or all of the $n$ gaps between the pivots
in a minimal loose lineup,
we may or may not end up with a jammed lineup.
This is because the pivothood of each slice is not guaranteed to be preserved.
Thus, we can only talk about an upper bound for the minimal jammed lineups for each $n$,
which is $(2^n - 1)\left( \binom{\ell + r - 1}{r - 1} - r \right)^n$.
We have shown that
\begin{align}
\nonumber
  \left\vert \mathcal{MLL}_{n, c} \right\vert = \left( \binom{\ell + r - 1}{r - 1} - r \right)^n,
  \textrm{ and }
  \left\vert \mathcal{MJL}_{n, c} \right\vert \leq (2^n - 1)
    \left( \binom{\ell + r - 1}{r - 1} - r \right)^n,
\end{align}
for any $c$ and $n$.

\begin{theorem}
\label{thmQconjPrep}
  For any profile $c = (c_1, \ldots, c_r)$, the identity \eqref{eqQconjPrepGenFunc}
  holds with $\widetilde{P}_{0, c}(q) = 1$, and
  \begin{align}
  \nonumber
    \widetilde{P}_{n, c}(q) = \sum_d q^{ n \Delta(c, d) + nr } \widetilde{P}_{n-1, d}(q)
  \end{align}
  for $n \geq 1$, where the sum is over all potential pivot shapes $d$.
\end{theorem}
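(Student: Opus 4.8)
The plan is to feed the pivot decomposition of Theorem~\ref{thmCylPtnVsPtnPairsFullCase} into Lemma~\ref{lemmaQConjPrep}, and then to read the recursion for $\widetilde{P}_{n,c}(q)$ off the anatomy of minimal loose lineups.

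First I would factor off the $\mu$-part. By Theorem~\ref{thmCylPtnVsPtnPairsFullCase}, a cylindric partition $\Lambda$ with profile $c$ corresponds bijectively to a pair $(\mu,\beta)$ in which $\mu$ is an arbitrary partition and $\beta$ carries one copy of each pivot slice of $\Lambda$; as $\beta$ runs over all admissible labelings it runs exactly over the chains of actual pivots ${}_{1}\Pi > {}_{2}\Pi > \cdots > {}_{n}\Pi > E$, and $\mu$ is otherwise unconstrained. Carrying the statistics through the bijection, the total number of slices of $\Lambda$ equals $\max(\Lambda) = n + l(\mu)$, where $l(\mu)$ is the number of parts of $\mu$, and $\vert\Lambda\vert = \vert\mu\vert + \sum_{j=1}^{n}\vert{}_{j}\Pi\vert$. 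Since $\sum_{\mu} z^{l(\mu)}q^{\vert\mu\vert} = \prod_{k\ge 1}(1-zq^{k})^{-1} = 1/(zq;q)_\infty$, summing $z^{\max(\Lambda)}q^{\vert\Lambda\vert}$ factors as
\begin{align}
\nonumber
  F_c(z,q) = \frac{1}{(zq;q)_\infty}\sum_{n\ge 0} z^{n}\left(\sum_{{}_{1}\Pi > \cdots > {}_{n}\Pi > E} q^{\vert {}_{1}\Pi\vert + \cdots + \vert {}_{n}\Pi\vert}\right).
\end{align}
The inner sum is exactly the left-hand side of Lemma~\ref{lemmaQConjPrep} --- the sum over \emph{actual} $n$-pivot chains --- so substituting its evaluation and putting $\widetilde{P}_{n,c}(q) := \sum_{\Lambda\in\mathcal{MLL}_{n,c}} q^{\vert\Lambda\vert}$ turns the display into precisely \eqref{eqQconjPrepGenFunc}; the normalization $\widetilde{P}_{0,c}(q)=1$ holds because $\mathcal{MLL}_{0,c}$ is the single empty lineup.

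It then remains to establish the recursion for $\widetilde{P}_{n,c}(q)$ with $n\ge 1$. A minimal loose lineup ${}_{1}\Pi > \cdots > {}_{n}\Pi > {}_{n+1}\Pi = E$ in profile $c$ is determined by the sequence of shapes $({}_{1}\sigma,\ldots,{}_{n}\sigma)$ of its pivots, each a potential pivot shape, together with ${}_{n+1}\sigma = c$; minimality forces $\vert{}_{j}\Pi\vert-\vert{}_{j+1}\Pi\vert = \Delta({}_{j+1}\sigma,{}_{j}\sigma)+r$ for every $j$. I would group such lineups by the shape $d = {}_{n}\sigma$ of the smallest pivot, for which $\vert{}_{n}\Pi\vert = \Delta(c,d)+r$. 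Deleting ${}_{n}\Pi$ and regarding it as the new shape of zero re-bases what is left to a minimal loose lineup with $n-1$ pivots and shape sequence $({}_{1}\sigma,\ldots,{}_{n-1}\sigma)$ in profile $d$, and this is a bijection, the inverse of which prepends the unique smallest pivot of shape $d$ and weight $\Delta(c,d)+r$ and raises all remaining pivots to suit. Because $\Delta$ of a pair of shapes is independent of the ambient profile, re-basing lowers the weight of every one of the $n-1$ surviving pivots by exactly $\vert{}_{n}\Pi\vert = \Delta(c,d)+r$, while removing ${}_{n}\Pi$ itself discards one further copy of $\Delta(c,d)+r$; hence the total weight drops by $n\bigl(\Delta(c,d)+r\bigr)$. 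Summing $q^{\vert\Lambda\vert}$ over this grouping gives $\widetilde{P}_{n,c}(q) = \sum_{d} q^{\,n\Delta(c,d)+nr}\,\widetilde{P}_{n-1,d}(q)$, the sum over potential pivot shapes $d$. At $q=1$ this recovers $\widetilde{P}_{n,c}(1) = \bigl(\binom{\ell+r-1}{r-1}-r\bigr)^{n}$, and positivity of the coefficients of $\widetilde{P}_{n,c}(q)$ is immediate from its description as a sum of monomials.

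The hard part will be the bijection in the previous paragraph: one must check that ``delete the smallest pivot and re-base'' carries actual pivots to actual pivots (not merely potential ones), preserves the strict containments ${}_{j}\Pi > {}_{j+1}\Pi$ and ${}_{n-1}\Pi > {}_{n}\Pi$, and transfers minimality --- all of which rest on the additivity of slice weights along a chain together with the profile-independence of the $\Delta$-values. A second point needing attention is the orientation of $\Delta$: the exponent is $n\Delta(c,d)$ with $c$ the shape of zero and $d$ the smallest pivot shape, in line with the definition \eqref{defDeltaOp} and with the role $\Delta$ plays in \eqref{eqPbaseDiff}.
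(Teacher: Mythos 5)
Your proposal is correct and follows essentially the same route as the paper: decompose $\Lambda$ via Theorem \ref{thmCylPtnVsPtnPairsFullCase} into an unrestricted $\mu$ (giving the factor $1/(zq;q)_\infty$ with $z$ tracking the extra slices) and a chain of actual pivots, evaluate the pivot-chain sum by Lemma \ref{lemmaQConjPrep} so that $\widetilde{P}_{n,c}(q)=\sum_{\Lambda\in\mathcal{MLL}_{n,c}}q^{\vert\Lambda\vert}$, and obtain the recursion by stripping the smallest pivot and re-basing --- exactly the mechanism behind Theorem \ref{thmCDUconjPrep} that the paper's one-line proof invokes. The only difference is that you spell out details the paper leaves implicit (in particular the weight drop $n(\Delta(c,d)+r)$ and the fact that the loose condition \eqref{ineq_QConjPrep_star} already guarantees pivothood, which settles the ``hard part'' you flag), and your orientation $\Delta(c,d)$ agrees with \eqref{eqPbaseDiff} and the matrix example following the theorem.
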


\begin{proof}
 Combine Theorem \ref{thmCDUconjPrep} and Lemma \ref{lemmaQConjPrep}.
 Any cylindric partition with profile $c$ breaks up
 into an ordinary partition and a chain of pivot slices.
 The ordinary partition is generated by $\frac{1}{ (zq; q)_\infty }$,
 and the chain of pivots is found with the help of Lemma \ref{lemmaQConjPrep}.
\end{proof}

The only missing detail in the the proofs above is the placement of $z$
in the factor $\frac{1}{(zq; q)_\infty}$,
contributing to the maximum part in the generated cylindric partitions.
But this is implicit in the proof of Theorem \ref{thmCylPtnVsPtnPairsFullCase}.
The slices whose weights are stored in $\mu$ are either duplicates of pivot slices,
or the non-pivot slices in between.
In short, they are additional slices.
As such, they contribute one each to the maximum part.

Again, if one fixes the rank and the level,
it is convenient to rewrite the recurrence in
Theorem \ref{thmQconjPrep} in terms of matrices and vectors.
For instance, for rank $r = 3$ and level $\ell = 2$, one has
\begin{align}
\nonumber
  \begin{bmatrix}
   \widetilde{P}_{n, (2,0)} \\ \widetilde{P}_{n, (2,1)} \\ \widetilde{P}_{n, (2,2)}
  \end{bmatrix}
  = \begin{bmatrix}
   q^{3n} & q^{4n} & q^{5n} \\
   q^{5n} & q^{3n} & q^{4n} \\
   q^{7n} & q^{5n} & q^{3n}
  \end{bmatrix}
  \begin{bmatrix}
   \widetilde{P}_{n-1, (2,0)} \\ \widetilde{P}_{n-1, (2,1)} \\ \widetilde{P}_{n-1, (2,2)}
  \end{bmatrix},
  \textrm{ and }
  \begin{bmatrix}
   \widetilde{P}_{n, (0,0)} \\ \widetilde{P}_{n, (1,0)} \\ \widetilde{P}_{n, (1,1)}
  \end{bmatrix}
  = \begin{bmatrix}
   q^{5n} & q^{6n} & q^{7n} \\
   q^{4n} & q^{5n} & q^{6n} \\
   q^{6n} & q^{4n} & q^{5n}
  \end{bmatrix}
  \begin{bmatrix}
   \widetilde{P}_{n-1, (2,0)} \\ \widetilde{P}_{n-1, (2,1)} \\ \widetilde{P}_{n-1, (2,2)}
  \end{bmatrix}.
\end{align}

We also have the following corollary.
\begin{cor}
\label{coroQconj}
  Let $r$ be the rank, and $\ell$ be the level
  of the cylindric partitions that are being considered.  
  $\widetilde{P}_{n, c}(q)$ is a polynomial with positive coefficients, and 
  \begin{align}
  \nonumber 
    \widetilde{P}_{n, c}(1) = \left( \binom{\ell+r-1}{r-1} - r \right)^n.
  \end{align}
\end{cor}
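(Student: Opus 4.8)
The plan is to prove both assertions of Corollary~\ref{coroQconj} simultaneously by induction on $n$, feeding on the recurrence
\[
  \widetilde{P}_{n,c}(q) = \sum_d q^{\,n\Delta(c,d)+nr}\,\widetilde{P}_{n-1,d}(q)
\]
supplied by Theorem~\ref{thmQconjPrep}, where the sum runs over the potential pivot shapes $d$. I would carry the inductive statement uniformly over \emph{all} starting shapes $c$, not just the potential pivot ones: namely, that $\widetilde{P}_{n,c}(q)$ is a polynomial with non-negative coefficients, not identically zero, and $\widetilde{P}_{n,c}(1) = N^{n}$, where $N := \binom{\ell+r-1}{r-1}-r$ is the number of potential pivot shapes. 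The base case $n=0$ is immediate from $\widetilde{P}_{0,c}(q)=1$, and the fact that $\widetilde{P}_{n,c}(q)$ is a genuine, finitely supported polynomial is clear from the recurrence, being a finite sum of monomials times polynomials.

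For the positivity half of the inductive step, the one fact I need is $\Delta(c,d)\ge 0$ for all shapes $c,d$. This is immediate from its meaning as established in Section~\ref{secPrelim}: $\Delta(c,d)$ is the weight of the minimal cylindric partition with profile $c$ and shape $d$, and a cylindric partition has non-negative weight. One can also read it off \eqref{defDeltaOp} directly: with $h$ the displayed maximum, $\Delta$ equals $rh$ plus the difference $\lvert\tau\rvert-\lvert\sigma\rvert$ of the sizes of the shapes of $d$ and of $c$, while $\lvert\sigma\rvert-\lvert\tau\rvert\le(r-1)h$ because a partition with at most $r-1$ parts, each dropped by at most $h$, loses at most $(r-1)h$ boxes; hence $\Delta\ge h\ge 0$. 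Given this, each summand $q^{\,n\Delta(c,d)+nr}\widetilde{P}_{n-1,d}(q)$ is a monomial of positive degree times a polynomial with non-negative coefficients, hence has non-negative coefficients; summing over the non-empty family of potential pivot shapes, each summand nonzero by the inductive hypothesis, keeps the coefficients non-negative and the total nonzero.

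For the evaluation at $q=1$, I would substitute $q=1$ into the recurrence, whereupon every factor $q^{\,n\Delta(c,d)+nr}$ becomes $1$, so $\widetilde{P}_{n,c}(1)=\sum_d \widetilde{P}_{n-1,d}(1)$ is a sum of exactly $N$ terms, each equal to $N^{n-1}$ by the inductive hypothesis; hence $\widetilde{P}_{n,c}(1)=N\cdot N^{n-1}=N^{n}$. This closes the induction and proves the corollary.

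I do not expect a real obstacle here: the argument proceeds exactly as the induction behind Corollary~\ref{coroCDUconj} for the $P_{n,c}$'s, the only arithmetic difference being that the diagonal entry of the transition matrix is $q^{nr}$ rather than $1$, which is invisible at $q=1$ and harmless for positivity. The two points I would state with care are the non-negativity of $\Delta$ and the need to run the induction over all potential pivot shapes at once, since the recurrence for a single profile $c$ invokes $\widetilde{P}_{n-1,d}$ for every potential pivot shape $d$.
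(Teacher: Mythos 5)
Your proof is correct and is essentially the argument the paper intends: the corollary is left as an immediate consequence of the recurrence in Theorem \ref{thmQconjPrep}, and your induction over all shapes, using $\Delta(c,d)\ge 0$ for positivity and the count of $\binom{\ell+r-1}{r-1}-r$ potential pivot shapes at $q=1$, is exactly the deduction that parallels Corollary \ref{coroCDUconj}.
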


Relabeling as
\begin{align}
\nonumber
  \widetilde{Q}_{n, c}(q) = \widetilde{P}_{n, c}(q)
    + \displaystyle \sum_{ \Lambda \in \mathcal{MJL}_{n, c} }
      \left( \prod_{j \in \iota(\Lambda)} (1 - q^{rj}) \right) q^{\vert \Lambda \vert},
\end{align}
we still have $\widetilde{Q}_{n, c}(q)$'s as polynomials, and
\begin{align}
\nonumber
  \widetilde{Q}_{n, c}(1) = \left( \binom{\ell+r-1}{r-1} - r \right)^n.
\end{align}
Although we cannot calculate the $\widetilde{Q}_{n, c}(q)$'s directly,
we can use the matrix formulation of Theorems \ref{thmCDUconjPrep} and \ref{thmCDUconjPrepAlt}
together with a corollary of the $q$-binomial theorem~\cite{GR} as follows,
for arbitrary but fixed profile $c$.
\begin{align}
\nonumber
  \sum_{n \geq 0} \frac{ \widetilde{Q}_{n, c}(q) z^n }{ (q^r; q^r)_n }
  = (zq; q)_\infty \sum_{k \geq 0} \frac{ P_{=k, c}(q) z^k }{ (q^r; q^r)_k }
  = \left\{ \sum_{m \geq 0} \frac{ (-1)^m q^{ \binom{m+1}{2} z^m } }{ (q; q)_m } \right\}
  \left\{ \sum_{k \geq 0} \frac{ P_{=k, c}(q) z^k }{ (q^r; q^r)_k } \right\}
\end{align}
\begin{align}
\nonumber
  = \left\{ \sum_{m \geq 0} \frac{ z^m }{ (q^r; q^r)_m }
    \left[ \prod_{j = 1}^m (1 + q^j + q^{2j} + \cdots + q^{(r-1)j}) \right]
      (-1)^m q^{ \binom{m+1}{2} } \right\}
  \left\{ \sum_{k \geq 0} \frac{ z^k }{ (q^r; q^r)_k }  P_{=k, c}(q) \right\}
\end{align}
\begin{align}
\nonumber
  \sum_{n \geq 0} \frac{ z^n }{ (q^r; q^r)_n }
  \sum_{ k + m = n } \begin{bmatrix} n \\ k \end{bmatrix}
    \left( \prod_{j = 1}^m (1 + q^j + q^{2j} + \cdots + q^{(r-1)j}) \right)
    (-1)^m q^{ \binom{m+1}{2} } P_{=k, c}(q)
\end{align}
Therefore,
\begin{align}
\label{eqQconjPrepRec}
  \widetilde{Q}_{n, c}(q)
  = \sum_{ k + m = n } \begin{bmatrix} n \\ k \end{bmatrix}
    \left( \prod_{j = 1}^m (1 + q^j + q^{2j} + \cdots + q^{(r-1)j}) \right)
    (-1)^m q^{ \binom{m+1}{2} } P_{=k, c}(q).
\end{align}
By construction, all of $P_{n, c}(q)$, $P_{=n, c}(q)$, and $\widetilde{P}_{n, c}(q)$'s
have positive coefficients.
This is not clear for $\widetilde{Q}_{n, c}(q)$.
Na\"{\i}ve approaches such as fixing the number and the shapes of pivots,
then calculating the contribution in the polynomial, do not work.




\section{Some comments and future work}
\label{secComments}

Theorem \ref{thmCDUconjPrep} seems to be naturally related to 
the framework of the method of weighted words 
introduced by Alladi and Gordon~\cite{Alladi-Gordon}, 
developed and extensively used 
by Dousse~\cite{Dousse-Schur-gen, Dousse-Siladic, Dousse-Capparelli-Primc}, 
and automated by Ablinger and Uncu~\cite{AbU}.  
In particular, one can easily translate 
the matrix form of Theorem \ref{thmCDUconjPrep} into a \emph{gap matrix}.  
Then, one can look for refinements in Borodin's Theorem \ref{theorem-Borodin}~\cite{Borodin}, 
at least for small ranks and levels.  

Theorem \ref{thmDistPartsGeneral} for rank $r = 2$ 
reduces to Corollary \ref{corDistPartsRank2}.  
It is noted afterwards that although diagonalization is guaranteed regardless, 
it appears that there are no repeated eigenvalues in this case.  
This may have a graph-theoretic explanation 
in conjunction with the corresponding shape transition graphs.  


Another open problem is the search for explicit, preferably closed, formulas
for $P_{n, c}(q)$'s, and thus for $P_{=n, c}(q)$'s and $\widetilde{P}_{n, c}(q)$,
based on the matrix characterization of Theorems \ref{thmCDUconjPrep} and \ref{thmCDUconjPrepAlt}.

Theorem \ref{thmDistPartsGeneral}, in the absence of Lambert series,
i.e. for $k = 0$ case,
suggests the study of asymptotics of $(-\alpha q; q)_\infty$
for fixed and known $\alpha \in \mathbb{C}$.
Possible papers to start with are~\cite{Boyer-Goh, Boyer-Goh-2, BFG, Parry}.

The functional equation \eqref{eqFuncEqAltToCorteelWelsh} 
must be equaivalent to Corteel and Welsh's 
functional equation~\cite[Proposition 3.1]{CW}.  

The interplay between loose lineups and jammed lineups 
is reminiscent of \emph{jagged partitions}~\cite{jagged}.  

\section*{Acknowledgements}

The authors thank Walter Bridges for the suggestions 
of references~\cite{Boyer-Goh, Boyer-Goh-2, BFG, Parry} 
for the remark about the asymptotics in Section \ref{secComments}, 
Ole Warnaar for correcting many inaccuracies in the exposition and 
pointing out~\cite{Tingley} and~\cite{Warnaar-BaileyTree}, 
and Ali Uncu for helpful remarks and corrections.  


\bibliographystyle{amsplain}

\end{document}